\theoremstyle{definition}
\newtheorem{definition}{Definition}[section]
\newtheorem*{assumption*}{Assumption}
\newtheorem*{condition*}{Condition}
\theoremstyle{plain}
\newtheorem{theorem}[definition]{Theorem}
\newtheorem{proposition}[definition]{Proposition}
\newtheorem{lemma}[definition]{Lemma}
\newtheorem{cor}[definition]{Corollary}
\theoremstyle{remark}
\newtheorem{remark}{Remark}
\newcommand{\N}{\mathbb{N}}
\newcommand{\R}{\mathbb{R}}
\newcommand{\E}{\mathbb{E}}
\newcommand{\pconv}{\xrightarrow{P}}
\newcommand{\Cov}{\operatorname{Cov}}
\newcommand{\wconv}{\Rightarrow}
\newcommand{\trunc}{\xi}
\newcommand{\D}{\mathrm{D}}
\newcommand{\diag}{\mathrm{diag}}
\newcommand{\f}{\boldsymbol{f}}
\newcommand{\I}{\boldsymbol{I}}
\newcommand{\sgn}{\mathrm{sign}}
\newcommand{\im}{\boldsymbol{i}}
\title{Rate-optimal estimation of the Blumenthal--Getoor index of a Lévy process}
\author{Fabian Mies\footnote{RWTH Aachen University, Institute of Statistics,
		mies@stochastik.rwth-aachen.de}}
\begin{document}

\maketitle

\begin{abstract}
	The Blumenthal--Getoor (BG) index characterizes the jump measure of an infinitely active Lévy process. 
	It determines sample path properties and affects the behavior of various econometric procedures. 
	If the process contains a diffusion term, existing estimators of the BG index based on high-frequency observations only achieve rates of convergence which are suboptimal by a polynomial factor.
	In this paper, a novel estimator for the BG index and the successive BG indices is presented, attaining the optimal rate of convergence.
	If an additional proportionality factor needs to be inferred, the proposed estimator is rate-optimal up to logarithmic factors.
	Furthermore, our method yields a new efficient volatility estimator which accounts for jumps of infinite variation. 
	All parameters are estimated jointly by the generalized method of moments.
	A simulation study compares the finite sample behavior of the proposed estimators with competing methods from the financial econometrics literature.
	
	\vspace{1ex}\noindent\textbf{Keywords:} high-frequency; method of moments; jump activity; Fisher information; non-diagonal rate matrix; asymptotic distribution;\\[1ex]
	{MSC 2000 subject classification:} primary 62M05; secondary 60G51;
\end{abstract}

\section{Introduction}\label{sec:intro}

Models for continuous time stochastic processes with jumps have gained increased interest in the statistical literature,
most prominently in financial econometrics where they are used as a model for asset prices \citep{andersen2002empirical,christensen2014fact}.
The jump behavior of these processes $X_t$ can be broadly characterized in terms of the jump activity index, given by 
\begin{align}
	\alpha = \inf \left\{p: \sum_{s\leq T} |\Delta X_s|^p<\infty\right\}. \label{eqn:JA-def-1}
\end{align}
Here, $\Delta X_s = X_s - X_{s-}$ denotes the size of a jump at time $s$.
If $X_t$ is a Lévy process, $\alpha$ is also known as the Blumenthal-Getoor index \citep{blumenthal1961sample}.
The index $\alpha$ depends on the small jumps only, and for semimartingales, its range is $\alpha\in[0,2]$.
Various qualitative properties of the process $X_t$ can be expressed in terms of the jump activity index.
If the process has only finitely many jumps in total, then $\alpha=0$, and if the jumps are of finite variation, we have $\alpha\leq 1$. 
Conversely, $\alpha<1$ implies jumps of finite variation. 
Furthermore, the value of $\alpha$ has implications for various econometric procedures.
For example, if the jumps are treated as a nuisance, jump-robust estimation of integrated volatility requires $\alpha<1$ \citep{jacod2014remark}, as well as an efficient drift estimator due to \cite{gloter2018jump}.
In these applications, a higher jump activity typically induces a non-negligible bias which can not be easily corrected if the jumps are considered as a nuisance.
Hence, highly active jumps need to be modeled more explicitly, as done by \cite{Amorino2018} for drift estimation, and by \cite{jacod2014efficient, jacod2016efficient} for volatility estimation.

As the jump activity index is a central property of infinite activity jump models, it is natural to consider statistical estimation of its precise value.
Recent interest in this topic has been initiated by \cite{ait2009estimating}, who study the estimation of $\alpha$ based on discrete high-frequency observations $X_{i/n}, i=1,\ldots, n$, where $X$ is an Itô semimartingale with a non-vanishing diffusion component. 
They specify \eqref{eqn:JA-def-1} more precisely by defining $\alpha$ in terms of the spot jump compensator $\nu_t$, assuming that $\nu_t\left((-x,x)^c\right) = r_t |x|^{-\alpha} + \mathcal{O}(|x|^{\delta-\alpha})$ as $|x|\to 0$ for a predictable process $r_t$, and some $\delta>0$.  
The statistical challenge is that, based on discrete observations at a given frequency, the small jumps can hardly be distinguished from the continuous diffusion movement. 
The solution of \cite{ait2009estimating} is to introduce a threshold sequence $\tau_n \propto h_n^\omega\to 0$ and consider 
\begin{align}
	U( \tau_n) = \sum_{i=1}^n \mathds{1}\left(\left|X_{\frac{i}{n}} - X_{t_\frac{i-1}{n}}\right| > \tau_n\right). \label{eqn:AJ-U}
\end{align}
If $\omega < 1/2$, the contribution of the diffusion towards the statistic $U(\tau_n)$ will be negligible. 
The jump activity can be identified via the approximate scaling relation $U(\tau_n) \propto \tau_n^{-\alpha}$, and \cite{ait2009estimating} show that this approach lends itself to derive an estimator of $\alpha$ with rate of convergence $n^{\alpha/10}$. 
Replacing the indicator in \eqref{eqn:AJ-U} by a suitable smooth function, \cite{jing2012jump} improve this rate to $n^{\alpha/8}$.
So far, the best rates have been achieved by \cite{reiss2013testing} for the case that $X_t$ is a Lévy process, and by \cite{bull2016near} for Itô semimartingales. 
Both authors construct estimators which converge at rate $n^{\alpha/4 - \epsilon}$ for arbitrary $\epsilon>0$. 
In both cases, the precise form of the estimator depends on the desired rate defect $\epsilon>0$. 

In the considered high-frequency setting, the optimal rate of convergence for estimating $\alpha$ is conjectured to be $n^{\alpha/4}$, up to logarithmic factors. 
This lower bound is justified by the results of \cite{ait2012identifying}, who study the diagonal entries of the Fisher matrix of a fully parametric submodel consisting of the sum of a Brownian motion and a symmetric $\alpha$-stable Lévy motion. 
A matching LAN result is not available since the off-diagonal entries have not been studied. 
This lower bound is discussed in Section \ref{sec:estimator-lb}.
It should be highlighted that the achievable rate of convergence for estimating $\alpha$ depends on whether the process contains a non-vanishing diffusion component. 
If we consider a pure-jump Itô semimartingale, the jump activity index can be estimated at rate $\sqrt{n}$ based on high-frequency observations \citep{todorov2015jump}.

Although the estimators of \cite{reiss2013testing} and \cite{bull2016near} almost achieve the optimal rate of convergence, there is so far no procedure which attains the $n^{\alpha/4}$ lower bound, even in the case where $X_t$ is a Lévy process.
This issue has also been formulated as an open problem by \cite{reiss2013testing}.
In this paper, we propose a new estimator of $\alpha$ for the Lévy case.
If only $\alpha$ is unknown, the estimator achieves the optimal rate of convergence, matching the lower bound of \cite{ait2012identifying}.
If an additional proportionality factor $r$ needs to be estimated, our estimator is rate-optimal up to a factor of $\log n$ for both $r$ and $\alpha$. 
Furthermore, we show that the diagonally rescaled Fisher matrix in the submodel considered by \cite{ait2012identifying} is asymptotically singular for the combined parameter $(\alpha,r)$, and hence we conjecture that our rate of convergence is in fact optimal.
Our procedure also yields an efficient estimator of the volatility $\sigma^2$ of the diffusion component of $X_t$ in the presence of jumps of infinite variation.
Under analogous conditions on the jump behavior, \cite{jacod2014efficient, jacod2016efficient} have derived a different efficient estimator of volatility which is robust to highly active jumps.
Hence, our estimator is an alternative to the method of \cite{jacod2014efficient}, although the latter is valid for Itô semimartingales and we restrict our attention to Lévy processes.
The proposed estimator is based on the generalized method of moments, and we estimate the jump and the diffusion parameters jointly in a single step as the solution of a system of estimating equations.

Our model allows for an asymmetric behavior of the small jumps. In particular, for a Lévy process $X_t$ with characteristic triplet $(\mu,\sigma^2,\nu)$, we suppose that the Lévy measure $\nu$ is locally stable in the sense that, for $z$ close to $0$,
\begin{align}
	\nu(dz) \approx \tilde{\nu}(dz)  = \sum_{m=1}^M \frac{\alpha_m}{|z|^{1+\alpha_m}} \left( r_m^+\mathds{1}_{z>0} + r_m^- \mathds{1}_{z<0} \right) \, dz. \label{eqn:def-stablesum-1}
\end{align}
Here, $M$ is a natural number, $r_m^\pm\geq 0$, $m=1,\ldots, M$, and the $0<\alpha_M < \ldots < \alpha_1 <2$ are the successive Blumenthal-Getoor indices, as introduced by \cite{ait2012identifying}.
The approximation in \eqref{eqn:def-stablesum-1} will be made precise in the sequel.
In particular, the BG index of $X_t$ will be $\alpha=\alpha_1$. 
We construct an estimator for the parameter vector $\theta\in\R^{3M+1}$ consisting of the volatility $\sigma^2$, the indices $\alpha_m$, and the proportionality factors $r_m^\pm$.

The remainder of this paper is structured as follows. 
In Section \ref{sec:estimator}, we present our model and the proposed estimator. 
A central limit theorem is given, establishing the rate $n^{\alpha/4}$. 
The rate of convergence and related lower bounds are discussed in Section \ref{sec:estimator-lb}. 
By means of a simulation study (Section \ref{sec:MC}), we compare the finite sample properties of our method with the jump activity estimators of \cite{bull2016near, reiss2013testing} and the volatility estimator of \cite{jacod2014efficient}.
All technical results, which might be of independent interest, are outlined in Section \ref{sec:technical-levy}, and the detailed proofs are gathered in Section \ref{sec:proofs-levy}.

\subsection{Notation}
For two real numbers $a,b$, we denote $a\wedge b = \min(a,b)$, $a\vee b = \max(a,b)$. The indicator function of a set $A$ is denoted as $\mathds{1}_A$. For a function $f=f(a,b,\ldots)$, $\partial_a f$ denotes the partial derivative w.r.t.\ $a$, and for a function $f(\theta)\in\R^m$ with $\theta\in\R^k$, the gradient matrix is denoted by $(\D_\theta f)_{j,l} = \partial_{\theta_l}f_j$. For $\delta>0$, $B_\delta(0)$ is the ball around $0$ with radius $\delta$ in $\R^k$, where $k$ is evident from the context. $\I_d\in\R^{d\times d}$ denotes the identity matrix. The multivariate normal distribution with covariance matrix $\Sigma$ and mean $0$ is denoted as $\mathcal{N}(0,\Sigma)$, and $\wconv$ denotes weak convergence of probability measures resp.\ random elements. The expectation operator is $\E$, and dependence upon a parameter $\theta$ is denoted as $\E_\theta$.

\section{Model and estimator}\label{sec:estimator}

Consider a univariate Lévy process $X_t$, $X_0=0$, with characteristic triplet $(\mu,\sigma^2, \nu)$ for a drift parameter $\mu\in\R$, volatility parameter $\sigma^2>0$, and a Lévy measure $\nu$, i.e.\ $\int (1\wedge |z|^2)\,\nu(dz)<\infty$. 
We choose an odd truncation function $\trunc$ such that $|\trunc|\leq 2$ and $\trunc(z)=z$ for $z\in(-1,1)$. 
Then $X_t$ admits the Lévy-Itô decomposition 
\begin{align}
	X_t = \mu t + \sigma B_t + \int_0^t \int (z-\trunc(z)) \, N(dz, ds) + \int_0^t \int \trunc(z)\,(N(dz, ds) - \nu(dz)\otimes ds), \label{eqn:levy-ito}
\end{align}
where $N(dz, ds)$ is a Poisson point process with intensity measure $\nu(dz)\otimes ds$, and $B_t$ is a standard Brownian motion, independent of $N$. 
The value of $\mu$ depends on the choice of the truncation function $\trunc$, but for our purposes, it will turn out that $\mu$ is negligible anyways. 
To make the approximation \eqref{eqn:def-stablesum-1} precise, we suppose that 
\begin{align}
	\begin{split}
		\left|\nu([x,\infty)) - \tilde{\nu}([x,\infty))\right| &\leq L |x|^{-\rho}, \quad x\in(0,1], \\
		\left|\nu(-\infty,x]) - \tilde{\nu}(-\infty,x])\right| &\leq L |x|^{-\rho}, \quad x\in[-1, 0), 
	\end{split} \label{eqn:def-locstable}
\end{align}
for some $L>0$ and $\rho>0$. 
The approximating measure $\tilde{\nu}$ is given by the Lebesgue density 
\begin{align}
	\tilde{\nu}(dz)  = \sum_{m=1}^M \frac{\alpha_m}{|z|^{1+\alpha_m}} \left( r_m^+\mathds{1}_{z>0} + r_m^- \mathds{1}_{z<0} \right) \, dz, \label{eqn:def-stablesum}
\end{align}
for some natural number $M$ and parameters $\boldsymbol{\alpha} = (\alpha_1,\ldots, \alpha_M)\in (0,2)^M$, and $\boldsymbol{r} = (r_1^+, r_1^-,\ldots, r_M^+, r_M^-)\in \R_{\geq 0}^{2M}$. 
The remainder term in \eqref{eqn:def-locstable} is treated as a nuisance. 
In particular, this remainder may still consist of infinite activity jumps. 
Our main result will require $\rho<\alpha_M$, such that the nuisance jumps are in a sense less active than the Lévy measure $\tilde{\nu}$ and asymptotically negligible.
The parameters of the modeled part are summarized as
\begin{align}
	\theta = (\sigma^2, \alpha_1, r_1^+, r_1^-,\ldots, \alpha_M, r_M^+, r_M^-)\in\Theta\subset\R^{3M+1}. \label{eqn:parameter-vector}
\end{align}
where $\Theta$ contains all parameter vectors $\theta$ as specified, such that additionally 
\begin{align*}
	\alpha=\alpha_1> \alpha_2 > \ldots > \alpha_M > \frac{\alpha}{2},\qquad r_m^++r_m^->0,\, i=1,\ldots, M,\quad \sigma^2\geq 0.
\end{align*}
The value $\alpha=\alpha_1$ is of central importance.
In particular, we need to impose the lower bound $\alpha_M>\alpha/2$ to ensure identifiability of the full parameter vector $\theta$, see \cite{ait2012identifying}. 
Note that the definition \eqref{eqn:def-stablesum} is the same as given by \cite{jacod2016efficient} for the symmetric case.

In the high-frequency sampling setting considered here, we are given $n$ observations $X_{ih_n}$, $i=1,\ldots, n$ with observation frequency $h_n\to 0$ such that $nh_n = T$ is constant. 
Without loss of generality, let $T=1$ and $h = h_n = 1/n$. 
Equivalently, we observe the $n$ increments $\Delta_{n,i} X = X_{i h_n} - X_{(i-1)h_n} \sim X_{h_n}$, which constitute a triangular array of random variables with iid rows. 
The law of $X_{h_n}$ is not fully described by the parameters $(\sigma^2, \boldsymbol{r},\boldsymbol{\alpha})$ due to the remainder in \eqref{eqn:def-locstable}.
Hence, we approximate it by a fully specified Lévy process $\tilde{Z}_t$ with characteristic triplet $(0,\sigma, \tilde{\nu})$. 
The process $\tilde{Z}_t$ may be represented as 
\begin{align*}
	\tilde{Z}_t = \sigma B_t + \sum_{m=1}^M S_t^m,
\end{align*}
where $B_t, S_t^m$, $m=1,\ldots, M$, are independent Lévy processes, $B_t$ is a standard Brownian motion, and the $S_t^m$ are skewed $\alpha_m$-stable process with Lévy measure $|z|^{-1-\alpha_m} (r_m^+ \mathds{1}_{z>0} + r_m^- \mathds{1}_{z<0})$.

We suggest to estimate the parameter $\theta$ via the method of moments. 
In particular, we choose $3M+1$ functions $f_j:\R\to\R$, $\f = (f_1, \ldots, f_{3M+1})$, and a suitable scaling factor $u=u_n$, and define $\hat{\theta}=\hat{\theta}_n$ to be a solution of the equation 
\begin{align}
	F_n(\hat{\theta}_n) = \left[\frac{1}{n} \sum_{i=1}^n \f(u_n \Delta_{n,i}X) \right]- \E_{\hat{\theta}_n} \f(u_n \tilde{Z}_{h_n})\overset{!}{=}0. \label{eqn:def-GMM}
\end{align}
Here and in the following, $\E_\theta f(\tilde{Z}_h)$ denotes the expectation such that $\tilde{Z}_h$ is determined by the parameter vector $\theta$.
Since $\tilde{Z}_h$ is a fully parametric approximation of $X_h$, the function $F_n(\theta)$ can be be computed numerically, such that $\hat{\theta}_n$ is a feasible estimator.
To distinguish a generic parameter value from the parameters governing $X_t$, we denote by $\theta_0$ the true parameter such that \eqref{eqn:def-locstable} holds.

To study the limit of $\hat{\theta}_n$, we employ the standard framework for estimating equations as reviewed by \cite{jacod2017review}. 
Under the assumptions imposed below, we show that $\hat{\theta}_n-\theta_0 \approx - (\D_\theta F_n(\theta_0))^{-1} F_n(\theta_0)$, up to negligible terms.
In order for $\hat{\theta}_n$ to have good asymptotic properties, the choices of the moment functions $\f$ and the scaling factor $u_n$ are crucial. 
In particular, to derive a central limit theorem for $F_n(\theta_0)$ (see Lemma \ref{lem:moment-clt}), we need to control the sampling variance in \eqref{eqn:def-GMM} as well as the bias incurred by approximating $X_t$ by $\tilde{Z}_t$.
Furthermore, the asymptotic behavior of $\D_\theta F_n(\theta)$ as $n\to\infty$ needs to be treated (see Lemma \ref{lem:expect-derivative}).
To this end, the following properties turn out to be sufficient. 

\begin{condition*}[F1]\label{ass:F1}
	For $j=1,\ldots, 3M+1$, the functions $f_j\in\mathcal{C}^3(\R)$ satisfy $\|f_j^{(k)}\|_\infty<\infty$ for $k=0,1,2,3$, and $f_j'\in L_1(\R)$. 
\end{condition*}

The smoothness imposed by Condition \nameref{ass:F1} is used to bound the bias incurred by approximating $\E \f(uX_{h_n})$ by $\E_\theta \f(u\tilde{Z}_{h_n})$, see Corollary \ref{cor:bias-complete} below.
To control the sampling variance, we do not only require smoothness of the employed moment functions, but they further need to be of a specific shape.

\begin{condition*}[F2]\label{ass:F2}
	The function $f_1$ is symmetric and satisfies $f_1(0)=f_1'(0)=0 \neq f_1''(0)$. The functions $f_j$, $j=2,\ldots, 3M+1$, are identically zero on the interval $[-\eta,\eta]$ for some $\eta>0$.
\end{condition*}

Additional identifiability conditions are specified in assumption \nameref{ass:I} below.
The first moment function $f_1$ is approximately quadratic near zero, and will serve to identify the volatility $\sigma^2$. 
The functions $f_j(x)$ are smooth thresholds, which distinguish the diffusion from the jump component.
An example of suitable moment functions is given in section \ref{sec:MC}. 
To ensure that the threshold is effective, we require that $u_nX_{h_n}\to 0$ in probability, i.e.\ $u_n = o(\sqrt{n})$.
By choosing an appropriate scaling sequence as follows, the moments $\E f_j(u_n\tilde{Z}_{h_n})$, $j\geq 2$, will be dominated by the jump component.  

\begin{condition*}[U]\label{ass:U}
	$u_n\to\infty$ such that $u_n = \frac{\tau\sqrt{n}}{\sqrt{\log n}}$ for some $\tau < \frac{\eta}{\sigma \sqrt{8}}$.
\end{condition*}

Although potentially not sharp, the upper bound on the factor $\tau$ is required to derive our asymptotic result. 
For details, see the technical Lemma \ref{lem:moments} below and the subsequent discussion.  
When choosing $u_n$ in accordance with condition \nameref{ass:U}, it suffices to use a reasonable upper bound on $\sigma$.
Furthermore, the simulation results presented in section \ref{sec:MC} show that larger values of $u_n$ also perform well in finite samples.

To formulate our main result on the asymptotic behavior of $\hat{\theta}$, we introduce the quantities
\begin{align*}
	\mathcal{J}_\alpha^\pm g (x) &= \alpha \int  \frac{g(x+z)-g(x) - g'(x) \trunc(z)}{|z|^{1+\alpha}} \mathds{1}_{\{\pm z >0\}}\, dz, \qquad \alpha\in(0,2),
\end{align*}
which exist if $\|g\|_\infty, \|g''\|_\infty <\infty$. 
Furthermore, we introduce the matrices
\begin{align*}
	\gamma_{n,m}(\theta) &= 
		\begin{pmatrix}
		  1 			 & 0  & 0 \\
		  -r_m^+ \log u_n  & 1  & 0 \\
		  -r_m^- \log u_n  & 0  & 1
		\end{pmatrix},\quad m=1,\ldots, M,\\
	\Gamma_n(\theta)  &= \diag(\I_1, \gamma_{n,1},\ldots, \gamma_{n, M}) \in \R^{(3M+1) \times (3M+1)}, \\
	\bar{\Lambda}_n(\theta) & = \sqrt{h_n}\,\diag(\sqrt{h_n}^{-1},  u_n^{\alpha_1 - \frac{\alpha_1}{2}}, u_n^{\alpha_1 - \frac{\alpha_1}{2}},  u_n^{\alpha_1 - \frac{\alpha_1}{2}}, u_n^{\alpha_2 - \frac{\alpha_1}{2}},\ldots \\
	&\qquad\qquad \ldots,  u_n^{\alpha_{M-1}-\frac{\alpha_1}{2}}, u_n^{\alpha_M - \frac{\alpha_1}{2}}, u_n^{\alpha_M - \frac{\alpha_1}{2}}, u_n^{\alpha_M - \frac{\alpha_1}{2}}),
\end{align*}
and the matrix $A(\theta)\in\R^{(3M+1)\times(3M+1)}$, given by
\begin{align*}
		A(\theta)_{1,1} = f_1''(0)/2, \quad A(\theta)_{1,j}=A(\theta)_{j,1}=0, j\neq 1,
\end{align*}
and for $m=1,\ldots, M$, $j=2,\ldots, 3M+1$,
\begin{align*}
	A(\theta)_{j, 3m-1} &= \partial_{\alpha_m} (r_m^+\mathcal{J}_{\alpha_m}^+f_j(0) + r_m^-\mathcal{J}_{\alpha_m}^-f_j(0)), \\
	A(\theta)_{j, 3m}   &= \mathcal{J}_{\alpha_m}^+f_j(0), \qquad 
	A(\theta)_{j, 3m+1} = \mathcal{J}_{\alpha_m}^-f_j(0).
\end{align*}
These derivatives exist because $\|f\|_\infty, \|f''\|_\infty$ are finite.
Finally, we introduce the symmetric positive semidefinite matrix $\Sigma(\theta)$ given by 
\begin{align*}
	\Sigma(\theta)_{1,1} &= \frac{\sigma^4 f_1''(0)^2}{2}, \\
	\Sigma(\theta)_{1,j} & =\Sigma(\theta)_{j,1}=0, \qquad j\geq 2,\\
	\Sigma(\theta)_{j,k} &= \left(r_1^+\mathcal{J}_{\alpha_1}^++ r_1^-\mathcal{J}_{\alpha_1}^- \right)(f_j\cdot f_k)(0),\qquad j,k\geq 2.
\end{align*}
If clear from the context, we will omit the dependence on $\theta$.
Using this notation, we can formulate the remaining identifiability condition.

\begin{condition*}[I]\label{ass:I}
	For the true parameter $\theta_0$, $A(\theta_0)$ is regular.
\end{condition*}

\begin{remark}
Analyzing the degrees of freedom of the equation $|A(\theta)|=0$ suggests that condition \nameref{ass:I} is, in fact, the generic case.
To demonstrate this point, we construct a set of moment functions satisfying the identifiability condition. 
Consider the case $M=1$ with $\alpha_m=\alpha$ and $r_m^\pm=r^\pm$, $m=1$. 
We can construct a set of moment functions satisfying condition \nameref{ass:I} as follows.
Let $f_1=f$ and $g$ be symmetric functions satisfying conditions \nameref{ass:F1} such that $f_1''(0)\neq 0$, and $g$ vanishes on $[-1,1]$.
Furthermore, denote $a=\mathcal{J}_\alpha^+ g(0)=\mathcal{J}_\alpha^- g(0)$, and $b=\partial_{\alpha} \mathcal{J}_\alpha^\pm g(0)$. 
We set $f_2(x)=g(x), f_3(x) = g(2x)$, and $f_4(x) = g(x)\mathds{1}_{x>0}+g(2x)\mathds{1}_{x<0}$.
Note that $\mathcal{J}^\pm f_3 (0)= 2^\alpha \mathcal{J}^\pm g(0)=2^\alpha a$, as well as $\mathcal{J}^+f_4(0) = a$, and $\mathcal{J}^- f_4(0) = 2^\alpha a$.
Then one can check that
\begin{align*}
	A(\theta_0)&=A(\sigma^2, r^+, r^-,\alpha) \\
	&=\begin{pmatrix}
	f''(0)/2 & 0&0 & 0\\
	0& (r^++r^-)b		& a& a &\\
	0& 2^\alpha(r^++r^-)(b+a\log 2)	& 2^\alpha a& 2^\alpha a &\\
	0&r^+b+r^-a 2^\alpha \log 2 &  a & a(1+2^\alpha)&
	\end{pmatrix},
\end{align*} 
with determinant $\det(A)=-\frac{f''(0)}{2}(r^++r^-)\, a^3 \,2^\alpha \log 2$.
Hence, $A(\theta_0)$ is regular for $(r^++r^-)>0$ and all $\alpha\in(0,2)$ if $g$ is chosen such that $a\neq 0$.
This is in particular the case for the choice of the moment functions for the simulation study in section \ref{sec:MC}.
\end{remark}

The main result of this paper is the consistency and asymptotic normality of $\hat{\theta}_n$, as summarized by the following theorem.

\begin{theorem}\label{thm:estimating-clt}
	Let $X_t$ be a Lévy process satisfying \eqref{eqn:def-locstable} with some $\rho<\alpha/2$, and parameter vector $\theta_0\in\Theta$. 
	Let $\f$ satisfy assumptions \nameref{ass:F1} and \nameref{ass:F2}, 
	and be such that $A(\theta_0)$ is regular, and let $u_n\to\infty$ be chosen according to \nameref{ass:U}.
	Then there exists a sequence of random vectors $\hat{\theta}_n$ solving \eqref{eqn:def-GMM}, such that $\hat{\theta}_n\to\theta$ in probability as $n\to\infty$. 
	This sequence is eventually unique, and, as $n\to\infty$,
	\begin{align*}
		\sqrt{n} A(\theta_0) \bar{\Lambda}_n \Gamma_n^{-1}(\theta_0) (\hat{\theta}_n-\theta_0) \wconv \mathcal{N}(0, \Sigma(\theta_0) ).
	\end{align*}
\end{theorem}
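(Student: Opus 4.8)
The plan is to apply the standard machinery for $Z$-estimators (as in the review of \cite{jacod2017review}), which reduces the problem to three ingredients: (i) a central limit theorem for the normalized estimating function $F_n(\theta_0)$ at the true parameter, (ii) an analysis of the rescaled gradient $\D_\theta F_n(\theta)$ in a neighborhood of $\theta_0$, showing it converges to the regular matrix $A(\theta_0)$ after multiplication by the appropriate rate matrices, and (iii) a stochastic equicontinuity / uniform control argument to justify the linearization $\hat\theta_n - \theta_0 \approx -(\D_\theta F_n(\theta_0))^{-1} F_n(\theta_0)$ and to construct a consistent, eventually unique solution. Ingredients (i) and (ii) are precisely the content of the lemmas advertised in the text (Lemma \ref{lem:moment-clt} and Lemma \ref{lem:expect-derivative}), together with the bias bound of Corollary \ref{cor:bias-complete}, so the proof of the theorem itself is mostly a matter of assembling these pieces with the correct bookkeeping of the non-diagonal rate matrix $\bar\Lambda_n \Gamma_n^{-1}$.

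Concretely, I would first decompose $F_n(\theta_0) = [\frac1n\sum_i \f(u_n\Delta_{n,i}X) - \E\f(u_n X_{h_n})] + [\E\f(u_n X_{h_n}) - \E_{\theta_0}\f(u_n\tilde Z_{h_n})]$. The second bracket is a deterministic bias term, controlled by Corollary \ref{cor:bias-complete} using the local-stability bound \eqref{eqn:def-locstable} with $\rho < \alpha/2$ and the smoothness from \nameref{ass:F1}; the hypothesis $\rho<\alpha/2$ is exactly what makes this bias negligible at the target rate. The first bracket is a centered sum of i.i.d.\ triangular-array terms; here Condition \nameref{ass:F2} (the threshold shape of $f_j$, $j\ge2$, and the quadratic behavior of $f_1$ at $0$) together with the scaling \nameref{ass:U} controls the variances — $f_1(u_n\Delta X)$ contributes the $\sigma^4$ term of $\Sigma$, while the thresholded $f_j$ pick up only the stable jump contributions, yielding the $\mathcal J_{\alpha_1}^\pm$ entries. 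Lindeberg's CLT then gives $\sqrt n \,\Xi_n F_n(\theta_0) \wconv \mathcal N(0,\Sigma(\theta_0))$ for a suitable diagonal normalization $\Xi_n$; the subtle point is identifying $\Xi_n$ so that it interacts correctly with the gradient's normalization.

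For the gradient, I would show $\D_\theta F_n(\theta)$, evaluated near $\theta_0$ and multiplied on the left by $\Xi_n^{-1}$(or rather appropriately rescaled) and on the right by $\Gamma_n(\theta)\bar\Lambda_n^{-1}$, converges uniformly to $A(\theta_0)$. This is where the structure of $\Gamma_n$ and $\bar\Lambda_n$ is forced on us: differentiating $\E_\theta\f(u_n\tilde Z_{h_n})$ in $\alpha_m$ produces a $\log u_n$ factor (from differentiating $u_n^{\alpha_m}$-type scalings), which is precisely the off-diagonal $-r_m^\pm\log u_n$ entry of $\gamma_{n,m}$; the change of variables $z\mapsto z/u_n$ in $\mathcal J_{\alpha_m}^\pm$ explains the powers $u_n^{\alpha_m - \alpha_1/2}$ in $\bar\Lambda_n$, and the leading index $\alpha_1$ in each exponent reflects that the dominant-variance scale is set by the most active component $S^1$. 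Then the delta-method-style identity $\sqrt n A(\theta_0)\bar\Lambda_n\Gamma_n^{-1}(\hat\theta_n - \theta_0) = -\sqrt n A(\theta_0)\bar\Lambda_n\Gamma_n^{-1}(\D_\theta F_n)^{-1}F_n(\theta_0) + o_P(1)$, combined with the gradient limit and regularity of $A(\theta_0)$ from \nameref{ass:I}, collapses everything to the CLT for $F_n(\theta_0)$.

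The main obstacle I anticipate is the variance computation underlying the CLT, specifically verifying that the thresholded moment functions $f_j(u_n\Delta_{n,i}X)$ have variances of exactly order $h_n u_n^{\alpha_1}$ (up to constants matching $\Sigma$) and no larger — this requires careful estimates on $\E f_j(u_n X_{h_n})^2$ that separate the Brownian contribution (which is where \nameref{ass:U}'s constraint $\tau < \eta/(\sigma\sqrt8)$ enters, via a Gaussian tail bound ensuring $u_n B_{h_n}$ rarely exceeds the threshold $\eta$) from the jump contribution, and this is presumably the role of the technical Lemma \ref{lem:moments}. A secondary difficulty is the uniform (in $\theta$ near $\theta_0$) control of the rescaled gradient, since the rate matrices $\bar\Lambda_n(\theta), \Gamma_n(\theta)$ themselves depend on the unknown $\boldsymbol\alpha$ through exponents of $u_n$; one must show that $\bar\Lambda_n(\hat\theta_n)\bar\Lambda_n(\theta_0)^{-1}\to\I$ on the relevant event, which uses consistency $\hat\theta_n\pconv\theta_0$ in a bootstrap-like fashion — consistency is first established at a crude rate and then sharpened. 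Once these two points are in hand, eventual uniqueness follows from the invertibility of the limiting gradient together with a standard argument excluding spurious distant solutions via the thresholding structure of $\f$.
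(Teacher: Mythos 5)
Your proposal matches the paper's proof structure: a mean-value-theorem linearization of $F_n$ combined with the CLT of Lemma~\ref{lem:moment-clt}, the rescaled gradient limit of Corollary~\ref{cor:moment-derivative}, the uniformity estimates on $\tilde\Lambda_n,\bar\Lambda_n,\Gamma_n$ near $\theta_0$, and Slutsky's lemma, with consistency and eventual uniqueness supplied by Lemma~\ref{lem:estimating-consistency}. One small inaccuracy: eventual uniqueness in the paper comes not from the thresholding structure of $\f$ but from the Jacobi-type fixed-point argument (Lemma~6.2 of the cited review) applied in a shrinking ball, together with the fact that any consistent solution sequence eventually falls into that ball at the CLT rate established in the theorem itself — a small bootstrap which your sketch gestures at but attributes to the wrong source.
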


The resulting rate of convergence for the BG index $\alpha=\alpha_1$ is thus found to be $(n\log n)^{\frac{\alpha}{4}}$, which improves upon existing estimators and matches the lower bound of \cite{ait2012identifying} up to logarithmic factors.
However, the rate matrix of Theorem \ref{thm:estimating-clt} is non-diagonal. 
The phenomenon of a non-diagonal rate matrix has also been observed in the pure jump case, i.e.\ $\sigma^2=0$, see \cite{brouste2018efficient}.
We further discuss this aspect and the resulting marginal rates of convergence for $\hat{\alpha}_m$ and $\hat{r}_m^\pm$ in the next section.
Nevertheless, the matrices $\Gamma_n^{-1}$, $A(\theta_0)$, and $\Sigma(\theta_0)$ are block-diagonal, such that the volatility estimator $\hat{\sigma}^2$ is asymptotically independent of the estimator of the jump part.

The presented central limit theorem also holds for the fully specified case without nuisance, i.e.\ $L=0$ in \eqref{eqn:def-locstable}.
Even in this parametric case, we find that a simple GMM estimator based on $3M+1$ fixed moment functions, corresponding to $u_n=1$, will not achieve the best rate of convergence.
A careful construction of the estimating equation \eqref{eqn:def-GMM} is thus not only required to handle the nuisance term, but also for the underlying parametric problem itself. 

The proposed estimator for $\alpha$ can be contrasted with existing methods in the literature.
In an earlier study, \cite{reiss2013testing} suggests a test procedure for the value of $\alpha$ based on a statistic $T^m_n$ with tuning parameter $m\in\N$. 
Therein, it is established that $T_n^m\to Q(\alpha)$ as $n\to\infty$ at rate $n^{\frac{\alpha}{4}-\epsilon(m)}$, and $\epsilon(m)\to 0$ as $m\to\infty$.
By inverting the function $Q$, this approach yields a near-optimal estimator for $\alpha$.
The statistics $T_n^m$ are constructed based on nonlinear sample moments as in \eqref{eqn:def-GMM}, where the $f_j$ are linear combinations of trigonometric functions, i.e.\ $f_j(x) = \sum_{k} w_{k,j} \exp(i\lambda_kx)$. 
Choosing the weights $w_{k,j}$ carefully such that $\sum_k w_{k,j} \lambda_k^{2p} = 0$ for $p=1,\ldots, m-1$, \cite{reiss2013testing} is able to reduce the variance of the corresponding sample moments.
The arbitrarily small defect in the rate of convergence $n^{\alpha/4-\epsilon(m)}$ derived therein is thus due to the sampling variance.
In contrast, by choosing the moment functions to vanish near zero according to Condition \nameref{ass:F2}, we obtain a smaller variance of the sample moments. 

An alternative estimator achieving the rate $n^{\alpha/4-\epsilon}$ is presented by \cite{bull2016near}, which also uses functions which vanish near zero.
Therein, the value $\E \f(u_n X_{h_n})$ is approximated by a finite series expansion, and extending this expansion reduces the rate defect $\epsilon$.  
In contrast, we use the approximation $\E \f(u_n X_{h_n})\approx\E\f(u_n \tilde{Z}_{h_n})$.
Although the latter value is not available in explicit form and needs to be determined numerically, this approach allows us to decrease the bias of the estimating equation further than by any finite series expansion. 
In particular, we only incur a bias due to approximating the Lévy measure of $X_t$, but not due to a discretization of the time evolution of the process.
Thus, our method effectively circumvents the variance issue of \cite{reiss2013testing} and the bias issue of \cite{bull2016near}.
This allows us to eliminate the polynomial rate defect and achieve a faster rate of convergence.

\section{Asymptotic optimality}\label{sec:estimator-lb}

It is natural to ask whether our proposed estimator is asymptotically optimal.
From Theorem \ref{thm:estimating-clt}, we find that \begin{align}
	\sqrt{n}(\hat{\sigma}^2_n - \sigma^2) \wconv \mathcal{N}(0, 2\sigma^4), \label{eqn:sigma-efficient}
\end{align}
which matches the optimal estimator in the situation without jumps. 
That is, $\hat{\sigma}^2_n$ is efficient.
In general, jumps of infinite variation reduce the achievable rate of convergence for volatility estimators \citep{jacod2014remark}.
Here, we are able to recover efficiency by modeling the infinite variation part of the jump measure explicitly via \eqref{eqn:def-locstable}.
The same methodology has been applied by \cite{jacod2014efficient, jacod2016efficient} to construct an efficient estimator of $\sigma^2$.
Note that the latter studies treat more general types of semimartingales, while we only derived a result for Lévy processes.
In contrast to the existing estimators, which use a multi-step debiasing procedure, we determine $\hat{\sigma}^2$ by a single set of estimating equations. 
While our approach is conceptually simple, solving the estimating equations \eqref{eqn:def-GMM} is computationally expensive.
A comparison of the finite sample performance is presented in Section \ref{sec:MC}.

As the asymptotic variance of the estimators $\alpha_m$ and $r_m^\pm$ depends on the choice of $\f$, they can not be expected to be variance efficient.
Furthermore, they are coupled via $\Gamma_n$ and via the matrix $A(\theta_0)$, which is in general dense. 
Inspecting the limit in Theorem \ref{thm:estimating-clt}, we find that 
\begin{align}
	\begin{split}
	\hat{\alpha}_m - \alpha_m 
	&= \mathcal{O}_P\left( u^{ \frac{\alpha_1}{2} - \alpha_m} \right) 
	 = \mathcal{O}_P\left( (n \log n)^{ \frac{\alpha_1}{4} - \frac{\alpha_m}{2}}\right),\\
	\hat{r}^\pm_m - r^\pm_m 
	&= \mathcal{O}_P\left( u^{ \frac{\alpha_1}{2} - \alpha_m} \log u \right) 
	 = \mathcal{O}_P\left( (n\log n)^{ \frac{\alpha_1}{4} - \frac{\alpha_m}{2}} \log(n)\right).
	\end{split} \label{eqn:rates-GMM}
\end{align}
To assess these rates of convergence, we may compare with the lower bound of \cite{ait2012identifying}. 
Therein, the authors compute the diagonal terms of the Fisher information $\mathcal{I}^n_\theta$ based on $n$ observations of $\tilde{Z}_{1/n}$ for the symmetric case $r_m^+=r_m^-=r_m$ and $M=2$. 
Their analysis of the diagonal entries $\mathcal{I}^n_{\alpha_m,\alpha_m}$ and $\mathcal{I}^n_{r_m,r_m}$ suggests that an asymptotically optimal estimator $(\hat{\alpha}_m^*, \hat{r}_m^*)$ should satisfy 
\begin{align}
	\begin{split}
		\hat{\alpha}_m^* - \alpha_m &= \mathcal{O}_P\left( (n\log n)^{\frac{\alpha_1}{4} - \frac{\alpha_m}{2}} / \log n \right),\\
		\hat{r}_m^* - r_m &= \mathcal{O}_P\left( (n\log n)^{\frac{\alpha_1}{4} - \frac{\alpha_m}{2}} \right).
	\end{split} \label{eqn:rates-diagonal}
\end{align}
Notably, even for $M=1$, the rates \eqref{eqn:rates-diagonal} are faster than \eqref{eqn:rates-GMM} by a logarithmic factor. 

This difference could potentially be explained by the neglected off-diagonal terms of $\mathcal{I}_\theta$. 
A similar phenomenon occurs in the pure jump case $\sigma^2=0$, $M=1$, where for any sequence of diagonal matrices $D_n$, the limit of $D_n \mathcal{I}^n_{(\alpha, r)} D_n$ is singular, see \cite[Thm.\ 3.4]{masuda2015parametric} and \cite[Thm.\ 2]{ait2008fisher}. 
Recently, \cite{brouste2018efficient} studied this case, and established the LAN property with a non-diagonal rescaling matrix $D_n$. 
They find that the optimal rate of convergence is slower than suggested by the diagonal entries of the Fisher matrix, by a factor of $\log n$.
A similar phenomenon is observed when estimating the Hurst parameter of a fractional Brownian motion based on high-frequency observations \citep{brouste2018local}.
There is no LAN result available for estimation of the BG index in the case $\sigma^2>0$, and a full investigation of the LAN property in the present case is out of scope of this paper. 
Nevertheless, we can adapt the proof of \cite{ait2012identifying} to unveil the off-diagonal entries $\mathcal{I}^n_{\alpha_1, r_1}$. 
It turns out that the diagonally rescaled Fisher matrix is asymptotically singular, just as in the pure-jump case.

\begin{proposition}\label{prop:fisher}
Let $\mathcal{I}^h$ denote the Fisher information matrix of $\tilde{Z}_h$ with $M=1$ and $\alpha_1=\alpha$, $r_1^+=r_1^-=r$. 
Then, as $h\to 0$,
\begin{align*}
	\frac{(h\log(1/h))^\frac{\alpha}{2}}{h} \begin{pmatrix}
		1 & 0 \\
		0 & \frac{1}{\log(1/h)}
	\end{pmatrix}
	\begin{pmatrix}
		\mathcal{I}_h^{r,r} & \mathcal{I}_h^{r,\alpha} \\ \mathcal{I}_h^{r,\alpha} & \mathcal{I}_h^{\alpha,\alpha}
	\end{pmatrix} 
	\begin{pmatrix}
		1 & 0 \\
		0 & \frac{1}{\log(1/h)}
	\end{pmatrix}
	\\
	\longrightarrow
	\frac{2r}{\sigma^\alpha (2-\alpha)^\frac{\alpha}{2}}
	\begin{pmatrix}
		\frac{1}{r^2} & \frac{1}{2r} \\
		\frac{1}{2r} &  \frac{1}{4}
	\end{pmatrix}.
\end{align*}
In particular, the limiting matrix is singular.
\end{proposition}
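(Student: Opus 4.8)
The plan is to compute the Fisher information matrix $\mathcal{I}^h$ directly in the spirit of \cite{ait2012identifying}, but retaining the off-diagonal term. Since $\tilde{Z}_h = \sigma B_h + S_h$ with $S$ a symmetric $\alpha$-stable process (taking $r_1^+=r_1^-=r$), the law of $\tilde{Z}_h$ has a smooth Lebesgue density $p_h(x;\theta)$ which I would access through its characteristic function
\begin{align*}
	\varphi_h(t;\theta) = \exp\left( -\tfrac{1}{2}\sigma^2 h t^2 - h\, c_\alpha r |t|^\alpha \right),
\end{align*}
for an explicit constant $c_\alpha$ coming from integrating $|z|^{-1-\alpha}(1-\cos(tz))$. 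The Fisher information entries are $\mathcal{I}_h^{a,b} = \E_\theta[\partial_a \log p_h \cdot \partial_b \log p_h]$ for $a,b\in\{r,\alpha\}$. The first key step is a self-similarity/rescaling reduction: writing $\tilde{Z}_h \deq \sqrt{h}\,(\sigma B_1 + h^{1/\alpha - 1/2} S_1')$ is awkward because $h^{1/\alpha-1/2}\to\infty$, so instead I would rescale so that the \emph{diffusion} part is $O(1)$ and the stable part is a small perturbation of size $h^{1-\alpha/2}$ (after pulling out $\sqrt h$). That is, set $Y = \tilde{Z}_h/\sqrt{h}$, whose characteristic function is $\exp(-\tfrac12\sigma^2 t^2 - h^{1-\alpha/2} c_\alpha r |t|^\alpha)$; the Fisher information is invariant under this deterministic scaling, so $\mathcal{I}^h$ equals the Fisher information of $Y$ with respect to $(r,\alpha)$.

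The second step is a perturbative (small-jump) expansion. Let $\lambda = \lambda_h = h^{1-\alpha/2}$, which $\to 0$. The density of $Y$ is $p_Y = \phi_\sigma * \mu_\lambda$ where $\phi_\sigma$ is the $\Normal(0,\sigma^2)$ density and $\mu_\lambda$ is the law of $\lambda^{1/\alpha}$ times a standard $\alpha$-stable; equivalently in Fourier, $p_Y(x) = \phi_\sigma(x) + \lambda\, g_1(x) + o(\lambda)$ where $g_1$ is obtained by differentiating $\exp(-\lambda c_\alpha r|t|^\alpha)$ in $\lambda$ at $0$ and inverting the transform — concretely $g_1(x) = -c_\alpha r\, \mathcal{F}^{-1}[|t|^\alpha e^{-\sigma^2t^2/2}](x)$, a fixed Schwartz-type function. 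From this, $\partial_r \log p_Y \approx \lambda\, g_1/\phi_\sigma \cdot (1/r) \cdot r = \lambda (g_1/\phi_\sigma)/r$ to leading order (since $g_1$ is linear in $r$), and $\partial_\alpha \log p_Y \approx \lambda\,[\partial_\alpha(\lambda c_\alpha r)/\lambda]\cdot(\dots)$; the crucial observation driving the $\log(1/h)$ scaling is that $\partial_\alpha \lambda_h = \lambda_h \cdot \tfrac12\log h = -\tfrac12\lambda_h\log(1/h)$, so $\partial_\alpha \log p_Y \approx -\tfrac12\log(1/h)\cdot \lambda\,(g_1/\phi_\sigma) + (\text{lower order})$. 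Hence to leading order $\partial_\alpha \log p_Y = -\tfrac{r}{2}\log(1/h)\,\partial_r\log p_Y\,(1+o(1))$, which is precisely the rank-one degeneracy: after the stated rescaling by $\diag(1,1/\log(1/h))$, both rescaled score components become proportional to the single function $\lambda\,(g_1/\phi_\sigma)$, forcing the limiting matrix to have rank one.

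The third step is to nail the constants. I would compute $\E_\theta[(g_1/\phi_\sigma)^2\,\phi_\sigma]\,dx = \int g_1^2/\phi_\sigma$, pull out the $\lambda_h^2 = h^{2-\alpha}$ factor and the overall $1/h$ normalization appearing in the Fisher information per observation versus the single-increment information (note $\mathcal{I}^h$ here is for one observation of $\tilde Z_h$; the prefactor $(h\log(1/h))^{\alpha/2}/h$ times $h^{2-\alpha}$ collapses correctly to give the $\log(1/h)^{\alpha/2}$ and the constant), and identify $\int g_1^2/\phi_\sigma = c_\alpha^2 r^2 \cdot(\text{Gaussian integral involving }|t|^{2\alpha})$, finally matching against $\frac{2r}{\sigma^\alpha(2-\alpha)^{\alpha/2}}\cdot \tfrac{1}{r^2}$. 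The combinatorial bookkeeping of these constants — in particular tracking where the $(2-\alpha)^{\alpha/2}$ and $\sigma^{-\alpha}$ come from through the Fourier integral $\int |t|^{2\alpha}e^{-\sigma^2 t^2}dt$ and the precise value of $c_\alpha$ — is the part I'd expect to be most error-prone, but it is essentially the same computation already carried out by \cite{ait2012identifying} for the diagonal entries, so I would lean heavily on their formulas and only add the off-diagonal $\mathcal{I}_h^{r,\alpha}$, whose leading term by the proportionality above is automatically $-\tfrac{r}{2}\log(1/h)\cdot\mathcal{I}_h^{r,r}(1+o(1))$. The main genuine obstacle is making the perturbative expansion of $\log p_Y$ rigorous and uniform enough to justify differentiating under the expectation and discarding the $o(\lambda)$ and sub-leading $\log$ terms — this requires quantitative control of $g_1/\phi_\sigma$ and its $\alpha$-derivative in the Gaussian tails, which I would handle via the explicit Fourier representations and standard bounds on stable densities, exactly as in the references cited for the diagonal case.
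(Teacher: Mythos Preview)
Your perturbative expansion has a genuine gap that is not merely technical. You propose to expand $p_Y(x)=\phi_\sigma(x)+\lambda_h g_1(x)+o(\lambda_h)$ with $\lambda_h=h^{1-\alpha/2}$ and then compute $\int g_1^2/\phi_\sigma$. But $g_1(x)=-c_\alpha r\,\mathcal{F}^{-1}[\,|t|^\alpha e^{-\sigma^2 t^2/2}\,](x)$ inherits the $|t|^\alpha$ singularity at the origin and therefore decays only polynomially, $g_1(x)\asymp |x|^{-1-\alpha}$ as $|x|\to\infty$. Hence $\int g_1^2/\phi_\sigma=\infty$, because the Gaussian in the denominator explodes. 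The formal leading term of your Fisher information is infinite, so the expansion is not just hard to justify---it gives the wrong answer. A concrete symptom: you claim the prefactor $(h\log(1/h))^{\alpha/2}/h$ times $\lambda_h^2=h^{2-\alpha}$ ``collapses correctly,'' but in fact
\[
\frac{(h\log(1/h))^{\alpha/2}}{h}\cdot h^{2-\alpha}=h^{1-\alpha/2}(\log(1/h))^{\alpha/2}\longrightarrow 0,
\]
so your predicted order $\lambda_h^2$ for $\mathcal{I}_h^{r,r}$ is strictly smaller than the true order $h^{1-\alpha/2}/(\log(1/h))^{\alpha/2}$. The extra $(\log(1/h))^{\alpha/2}$ in the statement is precisely the signature of this divergent-integral phenomenon and cannot be produced by a one-term Taylor expansion of the density.

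What the paper does instead is exactly what \cite{ait2012identifying} do for the diagonal: write $p_h$ as a convolution, introduce $w_h=(rh)^{1/\alpha}/\sqrt{\sigma^2 h}$ and the auxiliary functions $S_h$, $R_h^0$, $R_h^1$, and express all Fisher entries through the integrals $J_h^{l,m}=\int R_h^l R_h^m/S_h$. These integrals \emph{diverge} as $h\to 0$ at the common rate $\psi_h\asymp h^{-(1-\alpha/2)}(\log(1/h))^{-\alpha/2}$, with finite limits $J_h^{l,m}/\psi_h\to\alpha^{4-l-m}$ established in \cite{ait2012identifying}. The only new content here is writing $\mathcal{I}_h^{r,\alpha}$ in terms of $J_h^{0,0}$ and $J_h^{1,0}$ and reading off the limit; the rank-one degeneracy then drops out of the relation $\alpha^3\cdot\alpha^3=\alpha^4\cdot\alpha^2$. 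Your heuristic that the rescaled $\alpha$-score becomes proportional to the $r$-score is morally correct, but making it precise requires this divergent-integral analysis rather than a pointwise density expansion.
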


The diagonal entries of the Fisher information matrix should match the optimal rates of convergence in the case where only a single parameter is unknown, e.g.\ if $(\sigma^2, r_1^+, r_1^-)$ are known and $\alpha_1$ should be estimated. 
In this situation, a natural version of our estimator is to consider only a single moment function $f$. 
Analogous to \eqref{eqn:def-GMM}, for any $m\in\{1,\ldots, M\}$, we may estimate $\alpha_m$ as the solution of \begin{align}
	\tilde{F}_n(\alpha_m) = \frac{1}{n} \sum_{i=1}^n f(u_n \Delta_{n,i} X) - \E_\theta f(u_n\tilde{Z}_h) \overset{!}{=} 0.
\end{align}
With a slight abuse of notation, we may also estimate $r_m^\pm$ by the equation $\tilde{F}_n(r_m^\pm)=0$. 
To distinguish jumps and diffusion, we suppose $f$ satisfies the same conditions as $f_2,\ldots, f_{3M+1}$, i.e.\ it should vanish around zero. 

\begin{proposition}\label{prop:clt-single}
	Let $X_t$ be a Lévy process satisfying \eqref{eqn:def-locstable} with some $\rho<\alpha_1/2$, and parameter vector $\theta_0\in\Theta$. 
	Let $f$ be a non-negative function satisfying \nameref{ass:F1}, and $f(x)=0$ for $x\in [-\eta,\eta]$, and choose $u_n\to\infty$ such that \nameref{ass:U} holds.
	Fix some $m\in\{1,\ldots, M\}$, and suppose that $\mathcal{J}_{\alpha_m}^\pm f(0) >0$.
	Then there exists a consistent sequence of estimators $\hat{\alpha}_m$ satisfying $\tilde{F}_n(\hat{\alpha}_m)=0$, such that $\hat{\alpha}_m\to\alpha_m$ in probability as $n\to\infty$, and
	\begin{align*}
		u_n^{\alpha_m-\frac{\alpha_1}{2}} \log(u_n)\left(\hat{\alpha}_m - \alpha_m\right) \wconv \mathcal{N}\left(0, \frac{(r_1^+ \mathcal{J}_{\alpha_1} + r_1^-\mathcal{J}_{\alpha_1}) f^2(0)}{(r_m^+\mathcal{J}_{\alpha_m}^+ + r_m^-\mathcal{J}_{\alpha_m}^-)f(0)} \right).
	\end{align*}
	Under the same conditions, and if all parameters except for $r_m^+$ resp.\ $r_m^-$ are known, there exists a consistent sequence of estimators $\hat{r}_m^\pm$ solving $\tilde{F}_n(\hat{r}_m^{\pm})=0$ such that, as $n\to\infty$,
	\begin{align*}
		u_n^{\alpha_m-\frac{\alpha_1}{2}}\left(\hat{r}_m^\pm - r_m^\pm\right) \wconv \mathcal{N}\left(0, \frac{(r_1^+ \mathcal{J}_{\alpha_1} + r_m^-\mathcal{J}_{\alpha_1}) f^2(0)}{\mathcal{J}_{\alpha_m}^\pm f(0)} \right).
	\end{align*}
\end{proposition}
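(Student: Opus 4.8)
The plan is to read this as the one-dimensional specialisation of the estimating-equation argument behind Theorem~\ref{thm:estimating-clt}, so the proof follows the standard template of \cite{jacod2017review}: (a) a central limit theorem for the suitably rescaled value of $\tilde F_n$ at the true parameter; (b) a non-degenerate limit for the suitably rescaled derivative of $\tilde F_n$, locally uniformly around the true value; (c) existence, consistency and eventual uniqueness of a root; and (d) a one-term Taylor expansion combining (a)--(c).

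For (a), decompose $\tilde F_n(\alpha_m^0) = \big(\tfrac1n\sum_{i=1}^n f(u_n\Delta_{n,i}X) - \E f(u_n X_{h_n})\big) + \big(\E f(u_n X_{h_n}) - \E_{\theta_0} f(u_n\tilde Z_{h_n})\big)$. The first bracket is a centred sum over an i.i.d.\ triangular array of the bounded variable $f(u_n X_{h_n})$. Because $f$ vanishes on $[-\eta,\eta]$, Condition~\nameref{ass:U} (via the Gaussian tail bound that keeps the diffusion part of $u_n X_{h_n}$ below the threshold, which is where $\tau<\eta/(\sigma\sqrt8)$ enters) together with Lemma~\ref{lem:moments} yields $\E f^2(u_n X_{h_n}) = h_n u_n^{\alpha_1}(r_1^+\mathcal{J}_{\alpha_1}^+ + r_1^-\mathcal{J}_{\alpha_1}^-)(f^2)(0)\,(1+o(1))$, with $\alpha_1$ dominating over $\alpha_2,\dots,\alpha_M$; a Lindeberg CLT, exactly as in Lemma~\ref{lem:moment-clt}, then gives $(h_n u_n^{\alpha_1/2})^{-1}$ times the first bracket $\wconv \mathcal{N}\big(0,(r_1^+\mathcal{J}_{\alpha_1}^+ + r_1^-\mathcal{J}_{\alpha_1}^-)(f^2)(0)\big)$. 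The second bracket is the approximation bias, which by Corollary~\ref{cor:bias-complete} is of order $h_n u_n^{\rho}$ up to logarithmic and lower-order terms; since $\rho<\alpha_1/2$ this is $o(h_n u_n^{\alpha_1/2})$ and hence negligible.

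For (b), $\partial\tilde F_n = -\partial\,\E_\theta f(u_n\tilde Z_{h_n})$, and Lemma~\ref{lem:expect-derivative} supplies the scaling expansion $\E_\theta f(u_n\tilde Z_{h_n}) = h_n\sum_{k=1}^M u_n^{\alpha_k}(r_k^+\mathcal{J}_{\alpha_k}^+ + r_k^-\mathcal{J}_{\alpha_k}^-)f(0) + (\text{lower order})$. Differentiating the $k=m$ summand in $\alpha_m$ produces the leading term $h_n u_n^{\alpha_m}\log(u_n)(r_m^+\mathcal{J}_{\alpha_m}^+ + r_m^-\mathcal{J}_{\alpha_m}^-)f(0)$, the factor $\log u_n$ coming from $\partial_{\alpha_m}u_n^{\alpha_m}$ and dominating the contribution $\partial_{\alpha_m}(r_m^+\mathcal{J}_{\alpha_m}^+ + r_m^-\mathcal{J}_{\alpha_m}^-)f(0)$; differentiating in $r_m^\pm$ produces $h_n u_n^{\alpha_m}\mathcal{J}_{\alpha_m}^\pm f(0)$ with no logarithm. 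Under the hypothesis $\mathcal{J}_{\alpha_m}^\pm f(0)>0$, together with $f\geq0$, $r_m^\pm\geq0$ and $r_m^++r_m^->0$, these leading coefficients are strictly positive, so $(h_n u_n^{\alpha_m}\log u_n)^{-1}\partial_{\alpha_m}\tilde F_n \to -(r_m^+\mathcal{J}_{\alpha_m}^+ + r_m^-\mathcal{J}_{\alpha_m}^-)f(0)$ and $(h_n u_n^{\alpha_m})^{-1}\partial_{r_m^\pm}\tilde F_n \to -\mathcal{J}_{\alpha_m}^\pm f(0)$, both locally uniformly near the true value. Dividing the limit from (a) by these constants and multiplying by $u_n^{\alpha_m-\alpha_1/2}\log u_n$, respectively $u_n^{\alpha_m-\alpha_1/2}$, cancels all powers of $h_n$, $u_n^{\alpha_m}$ and $\log u_n$ and leaves precisely the stated Gaussian limits.

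It remains to address (c) and (d). Since $u_n\to\infty$, the map $\alpha_m\mapsto u_n^{\alpha_m}(r_m^+\mathcal{J}_{\alpha_m}^+ + r_m^-\mathcal{J}_{\alpha_m}^-)f(0)$ is, for $n$ large, strictly increasing on the admissible interval for $\alpha_m$, hence $\alpha_m\mapsto\tilde F_n(\alpha_m)$ is eventually strictly monotone there; as it equals $o_P(h_n u_n^{\alpha_m}\log u_n)$ at $\alpha_m^0$ (using $\alpha_m>\alpha_1/2$) and has derivative of exact order $h_n u_n^{\alpha_m}\log u_n$, the intermediate value theorem produces a root within the claimed rate of $\alpha_m^0$, unique for $n$ large. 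The case of $r_m^\pm$ is identical, the relevant summand now being linear and hence trivially monotone in $r_m^\pm$. Finally, $0=\tilde F_n(\hat\alpha_m)=\tilde F_n(\alpha_m^0)+\partial_{\alpha_m}\tilde F_n(\bar\alpha_m)(\hat\alpha_m-\alpha_m^0)$ for some intermediate $\bar\alpha_m$, and local uniform convergence of the rescaled derivative from (b) together with Slutsky's theorem concludes. The main obstacle is step (b): isolating the exact leading order of $\partial_{\alpha_m}\E_\theta f(u_n\tilde Z_{h_n})$ — in particular the factor $\log u_n$ — and confirming that it is not cancelled by lower-order contributions from the other stable components or from the remainder jump measure; this, rather than the routine CLT or fixed-point parts, is what pins down the differing rates for $\hat\alpha_m$ and $\hat r_m^\pm$.
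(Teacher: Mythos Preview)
Your proposal is correct and follows essentially the same route as the paper: a CLT for $\tilde F_n(\theta_0)$ via Lemma~\ref{lem:moments} and Corollary~\ref{cor:bias-complete} (i.e.\ the one-dimensional case of Lemma~\ref{lem:moment-clt}), the locally uniform derivative limits from Lemma~\ref{lem:expect-derivative}, existence of a root, and a mean-value/Slutsky conclusion. The only cosmetic difference is in step~(c): the paper appeals to the fixed-point Lemma~6.2 of \cite{jacod2017review} as in Lemma~\ref{lem:estimating-consistency}, whereas you invoke monotonicity via the sign of the rescaled derivative and the intermediate value theorem---which is the natural one-dimensional simplification of the same argument.
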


Since $u_n$ is of order $\sqrt{n/\log n}$, Proposition \ref{prop:clt-single} establishes precisely the rates \eqref{eqn:rates-diagonal}.
In the setting of \cite{ait2012identifying}, in particular $M=2$, this shows that $\hat{\alpha}_m$ resp.\ $\hat{r}_m^\pm$ are rate efficient if the remaining parameters $\theta$ are known. 
In contrast, if all parameters $\theta$ are unknown, $\hat{\theta}$ achieves the optimal rate of convergence, up to a logarithmic factor. 
Due to the singularity of the Fisher matrix, we conjecture that the achieved rates \eqref{eqn:rates-GMM} are in fact optimal.

\section{Simulation study}\label{sec:MC}

By means of a Monte Carlo study, we compare the finite sample performance of our estimator with the estimators of \cite{reiss2013testing} and \cite{bull2016near} for the Blumenthal-Getoor index $\alpha$, and with the volatility estimator of \cite{jacod2014efficient}.
To this end, we sample paths of a Lévy process $X_t$ given by 
\begin{align}
	X_t = B_t + S^{\alpha, \beta}_t + 0.1 S^{0.5, 0}_t. \label{eqn:levy-sim}
\end{align}
We denote by $S^{\alpha,\beta}_t$ the $\alpha$-stable Lévy motion with skewness parameter $\beta\in(-1,1)$. 
That is, the characteristic function of $S^{\alpha,\beta}_t$ is given by (see e.g.\cite{zolotarev1986one})
\begin{align*}
	\log \E \exp(\im \lambda S_t^{\alpha,\beta}) = -t|\lambda|^\alpha \left[ 1-\im \tan\left(\frac{\pi\alpha}{2}\right) \beta \sgn(\lambda) \right].
\end{align*}
The Lévy measure corresponding to this standardization can be expressed in the form \eqref{eqn:def-stablesum} with $M=1$, $\frac{r^+ - r^-}{r^+ + r^-}=\beta$, and $(r^+ + r^-) = \frac{1}{\Gamma(1-\alpha) \cos(\pi\alpha/2)}$ if $\alpha\neq 1$. 
Here, we will set $\beta=-1/3$ and study the cases $\alpha=1.3$ and $\alpha=1.7$.
Then \eqref{eqn:def-locstable} is satisfied with $\rho=0.5$, such that $S^{0.5,0}_t$ is a nuisance term, and $\tilde{Z}_t = B_t + S_t^{\alpha,\beta}$.
In view of applications in financial econometrics, we consider the time horizon $T=1$, and sampling frequencies $h=0.2/23400$,$h=1/23400$, and $5/23400$.
This sampling schemes correspond to $0.2$ resp.\ $1$ resp.\ $5$ seconds per quote on a trading day of $6.5$ hours.

To determine the solution of the estimating equation \eqref{eqn:def-GMM}, we need to compute the moments $\E_\theta \f(u\tilde{Z}_h)$ and their gradients.
This can be done numerically by means of a continuous Fourier transform since $\E\exp(\im\lambda\tilde{Z}_h)$ is available in closed form.
The employed moment functions $f_1,\ldots, f_4$ are handcrafted to satisfy \nameref{ass:F1} and \nameref{ass:F2}.
In our simulations, we use
\begin{align}
	\begin{split}
	f_1(x) &= 1-\exp(-10\,x^2),\\
	f_2(x) &= \exp\left( -\frac{300}{(|0.4 x| - 0.2)\vee 0} \right)\cdot \exp\left( -\frac{10}{(4-|0.4 x| )\vee 0} \right), \\
	f_3(x) &= \exp\left( -\frac{300}{(|1.6 x| - 0.2)\vee 0} \right)\cdot \exp\left( -\frac{10}{(4-|1.6 x| )\vee 0} \right), \\
	f_4(x) &= \begin{cases}
	 \exp\left( -\frac{300}{(|1.6 x| - 0.2)\vee 0} \right)\cdot \exp\left( -\frac{10}{(4-|1.6 x| )\vee 0} \right),& x\geq 0, \\
	  \exp\left( -\frac{300}{(|0.4 x| - 0.2)\vee 0} \right)\cdot \exp\left( -\frac{10}{(4-|0.4 x| )\vee 0} \right),& x<0.
	\end{cases}
	\end{split} \label{eqn:functions-MC}
\end{align}
Note that $f_2,f_3, f_4$ vanish on $[-1/8, 1/8]$.
We use the rescaling factor $u=1/\sqrt{h |\log h|}$. 
Although this choice of $u$ is too large to comply with assumption \nameref{ass:U}, we found it to perform better than smaller values for the given sampling scenario. 

The methods of \cite{reiss2013testing} and \cite{bull2016near} each have a tuning parameter $m\in\N$, and larger values of $m$ increase the rate of convergence. 
However, smaller values of $m$ can be superior in finite samples. 
In our simulations, we found that the estimator of Bull performed best when setting $m=3$, and the estimator of Reiß performed best when setting $m=2$, across all observation frequencies.
Furthermore, the method of Reiss involves a rescaling parameter $U_n$ and two weighting measures $w_1$, $w_2$. 
We choose the weighting measure $w_1$ to be supported on the set $\{1/m, 2/m, \ldots, 1\}$, and $w_2$ to be supported on the set $\{2/m, 4/m, \ldots, 2\}$. 
The truncation parameter is set to $U = h^{-(1-2m)/(4m-1)}$, as suggested by equation (3.8) therein.

In Table \ref{tab:errors}, we compare the simulated performance of our moment estimator for $\alpha$ and $\sigma^2$ with the estimators of \cite{jacod2014efficient}, \cite{reiss2013testing}, and \cite{bull2016near}.
For the latter two, we choose the best tuning parameter $m$ as specified above. 
The estimator of \cite{jacod2014efficient} is implemented as in equation (5.3) therein, with $\zeta=1.5$ and $u=|\log h|^\frac{1}{30}$. 
It is found that the new estimators perform best in the considered setting
The good performance of the estimator of Reiß in the case $\alpha=1.7$ is somewhat surprising, since the analysis of \cite{reiss2013testing} only yields a suboptimal rate of convergence.
However, for the latter estimator, no central limit theorem is available.
Hence, it is possible that the estimator in fact converges at a rate which is faster than the rate derived by \cite{reiss2013testing}.
It should also be noted that all benchmarked methods require various tuning parameters. 
Most notably, all methods require some form of scaling factors.
Furthermore, our new estimator depends on the the employed moment functions $f_j$, and the estimator of \cite{bull2016near} requires the choice of a truncation kernel function.
It is thus possible that a very careful choice of these parameters might affect the ranking implied by Table \ref{tab:errors}.

\begin{table}[tb]
\centering
	\begin{tabular}{lr|cc|ccc}
		  \toprule
		$\alpha$ & $h=n^{-1}$ & GMM $\hat{\sigma}^2$ & JT $\hat{\sigma}^2$ & GMM $\hat\alpha$ & Reiß $\hat\alpha$ & Bull $\hat\alpha$ \\ 
		  \midrule
			  1.3 & 5/23400   &  0.04 & 0.07 & 0.19 & 0.28 & 0.59 \\
			  1.3 & 1/23400   &  0.02 & 0.03 & 0.13 & 0.17 & 0.37 \\
			  1.3 & 0.2/23400 &  0.007 & 0.010 & 0.08 & 0.10 & 0.25 \\ \midrule
			  1.7 & 5/23400   &  0.32 & 0.43 & 0.23 & 0.22 & 0.31 \\
			  1.7 & 1/23400   &  0.16 & 0.22 & 0.11 & 0.11 & 0.30 \\
			  1.7 & 0.2/23400 &  0.08 & 0.10 & 0.06 & 0.06 & 0.25 \\
		   \bottomrule
	\end{tabular}
	\caption{Median absolute errors for the estimation of $\alpha$ and $\sigma^2$ in model \eqref{eqn:levy-sim}, for different estimators. All values are based on $20000$ simulations.}
	\label{tab:errors}
\end{table}

The volatility estimator $\hat{\sigma}^2$ is efficient, and from \eqref{eqn:sigma-efficient}, the error $\hat{\sigma}^2-\sigma^2$ should be of order $\sqrt{2\,h\sigma^4}$. 
From the results of Table \ref{tab:errors}, we find that this asymptotic performance is not achieved for the considered sample sizes. 
This defect holds for our proposed estimator as well as for the benchmark method of \cite{jacod2014efficient}, and it is bigger for large values of $\alpha$.
This is potentially due to the relatively large jump component of the simulated process \eqref{eqn:levy-sim}.
On the other hand, the asymptotic distribution of Theorem \ref{thm:estimating-clt} yields a good approximation of the finite sample behavior of $\hat{\alpha}$, as shown in Figure \ref{fig:alpha-hist}.
Clearly, the match with the asymptotic normal distribution improves for smaller $h$. 
Furthermore, the approximation is better for the smaller value $\alpha=1.3$.

\begin{figure}[tb!]
	\begin{subfigure}[c]{0.48\textwidth}
	\includegraphics[width=\textwidth]{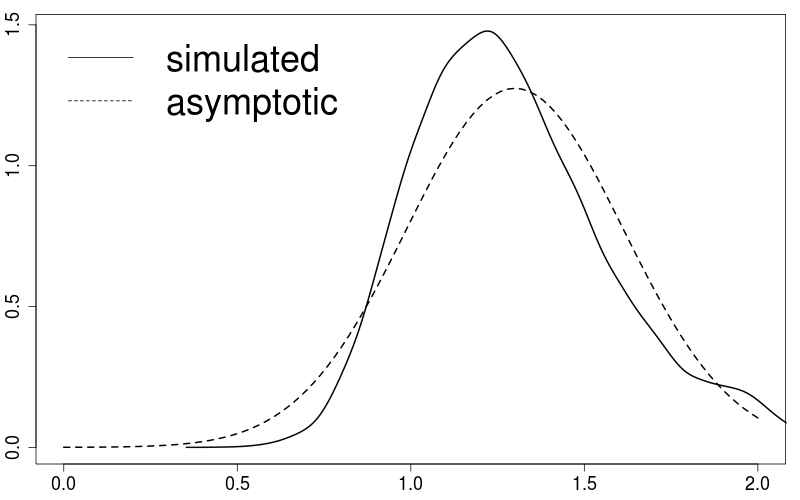}
	\subcaption{$h=5/23400$, $\alpha=1.3$}
	\end{subfigure}
	\begin{subfigure}[c]{0.48\textwidth}
	\includegraphics[width=\textwidth]{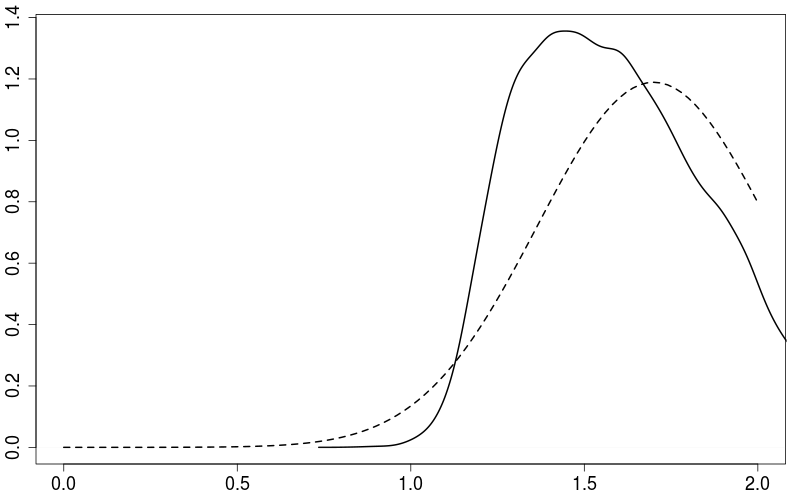}
	\subcaption{$h=5/23400$, $\alpha=1.7$}
	\end{subfigure}
	
	\begin{subfigure}[c]{0.48\textwidth}
	\includegraphics[width=\textwidth]{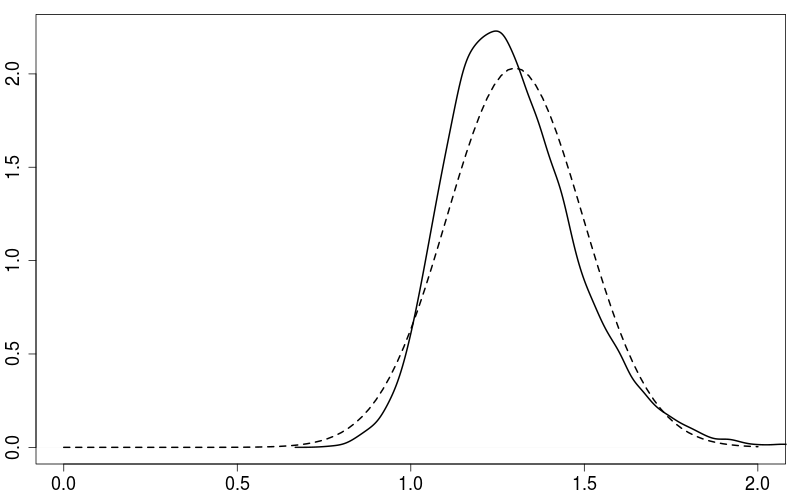}
	\subcaption{$h=1/23400$, $\alpha=1.3$}
	\end{subfigure}
	\begin{subfigure}[c]{0.48\textwidth}
	\includegraphics[width=\textwidth]{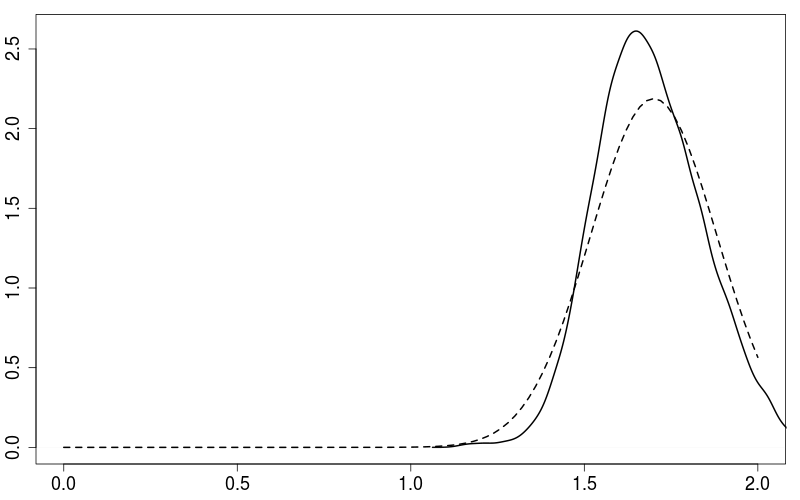}
	\subcaption{$h=1/23400$, $\alpha=1.7$}
	\end{subfigure}
	
	\begin{subfigure}[c]{0.48\textwidth}
	\includegraphics[width=\textwidth]{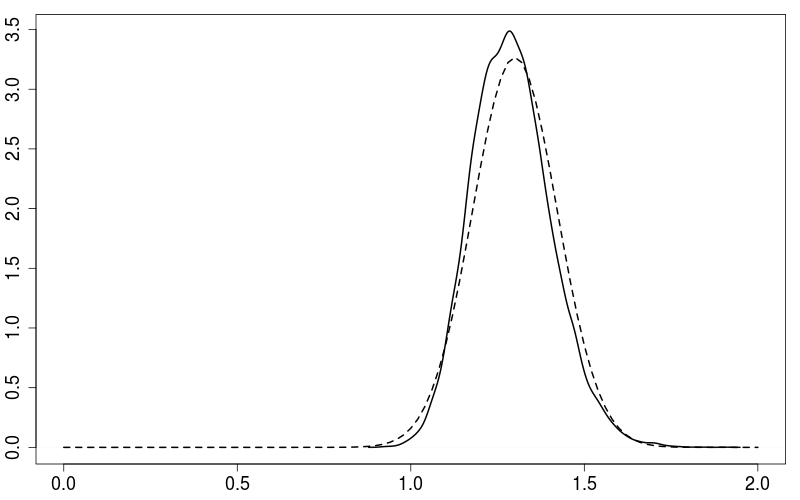}
	\subcaption{$h=0.2/23400$, $\alpha=1.3$}
	\end{subfigure}	
	\begin{subfigure}[c]{0.48\textwidth}
	\includegraphics[width=\textwidth]{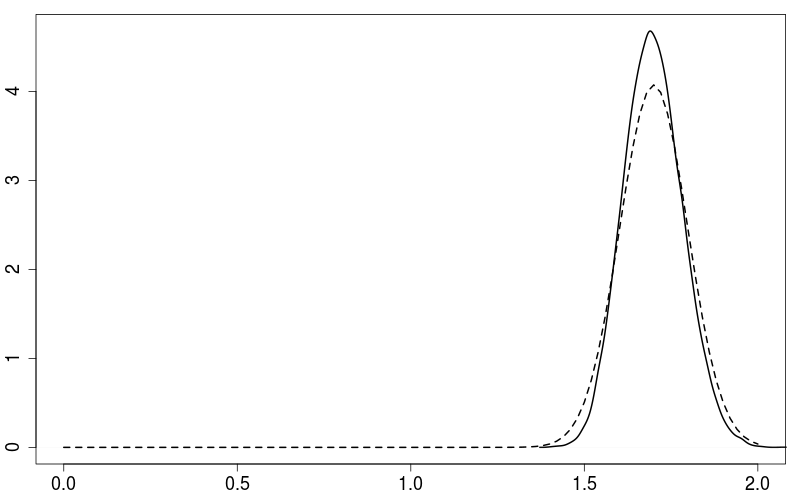}
	\subcaption{$h=0.2/23400$, $\alpha=1.7$}
	\end{subfigure}
\caption{Simulated and asymptotic distributions of the proposed estimator $\hat{\alpha}$, based on $20000$ simulations.}
\label{fig:alpha-hist}
\end{figure}

\section{Technical tools}\label{sec:technical}

In this section, we present the proofs of Theorem \ref{thm:estimating-clt} and Propositions \ref{prop:fisher} and \ref{prop:clt-single}. 
Preliminary technical results are presented in Subsection \ref{sec:technical-levy}, as they might be of independent interest, in particular Lemma \ref{lem:moments} and Corollary \ref{cor:bias-complete}.
The detailed proofs are presented in Subsection \ref{sec:proofs-levy}.

\subsection{Preliminary results}\label{sec:technical-levy}


To study the asymptotic behavior of the estimating equation \eqref{eqn:def-GMM} by standard techniques (see e.g.\ \cite{jacod2017review}), we need \begin{itemize}
	\item a central limit theorem for the term $\frac{1}{n} \sum_{i=1}^n \f (u_n \Delta_{n,i}X) - \E_\theta \f(u_n\tilde{Z}_h)$, and
	\item properties of the derivatives $\D_\theta \E_\theta \f(u_n\tilde{Z}_h)$.
\end{itemize} 

To determine asymptotic variances, as well as for some technical steps of the following proofs, it is useful to derive some explicit approximations of $\E \f(u_n \tilde{Z}_h)$.

\begin{lemma}\label{lem:moments}
	Let $f\in\mathcal{C}^2$ be such that $f, f'$ and $f''$ are bounded and $f(0)=0$, and let $\tilde{X}_t$ be a Lévy process with characteristic triplet $(\mu,\sigma^2, \tilde{\nu})$. 
	The implicit constants in the following expressions depend on $f$ and $(\mu,\sigma^2, \tilde{\nu})$, but neither on $t$ nor on $u$.
	Moreover, all $\mathcal{O}(\cdot)$ and $o(\cdot)$ terms are bounded resp.\ vanishing uniformly on compacts in $\Theta$.
	\begin{enumerate}[(i)]
		\item If $f(x)=0$ for $|x|\leq \eta$, then for any $\lambda\in(0,1)$ such that $u \leq \frac{(1-\lambda)\eta}{\sigma \sqrt{8 t |\log t|}}$, as $t\to 0$, \begin{align*}
				\E f(u\tilde{X}_t) =o(tu^\alpha ) +  t u^{\alpha} \left[ r_1^+ \tilde{\mathcal{J}}_{\alpha}^+f(0) + r_1^- \tilde{\mathcal{J}}_{\alpha}^-f(0)\right] ,
			\end{align*}
			where 
			\begin{align*}
			\tilde{\mathcal{J}}_{\alpha}^\pm f(x) &= \int \alpha \mathds{1}_{\pm z >0} \frac{\left[f(x+z) - f(x) - f'(x)\mathds{1}_{|z|\leq 1} \right]}{|z|^{1+\alpha}}\,\tilde{\nu}(dz).
			\end{align*}
		\item If, alternatively, $f(0)=0$ but $f''(0)\neq 0$, then for any $u=o(1/\sqrt{t})$\begin{align*}
			\E f(u\tilde{X}_t) = tu^2\frac{\sigma^2}{2}f''(0) + o(tu^2).
		\end{align*}
		\item If $f(0)=0, f''(0)=0$ but $f^{(4)}\neq 0$, and $f^{(3)}, f^{(4)}$ are bounded, then for any $u=o(1/\sqrt{t})$ \begin{align*}
			\E f(u\tilde{X}_t) = t^2u^4\frac{\sigma^4}{8}f^{(4)}(0) + o(t^2u^4) + \mathcal{O}(tu^\alpha).
		\end{align*}
		\item If $f(0)=0$ and $\mu=0$, $\sigma^2=0$, then there exists a constant $\tilde{C}$ bounded uniformly on compacts, such that for all $f$ and all $u>1$, $t\geq 0$, \begin{align*}
			\E f(u\tilde{X}_t) \leq t u^{\alpha\vee 1} (1+\log(u)) \left( \|f\|_\infty + \|f'\|_\infty + \|f''\|_\infty \right) \tilde{C}.
		\end{align*}
	\end{enumerate}
\end{lemma}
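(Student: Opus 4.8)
The plan is to compute $\E f(u\tilde{X}_t)$ via the Lévy--Khintchine structure, exploiting that $f(0)=0$ allows us to write $\E f(u\tilde X_t) = \E[f(u\tilde X_t) - f(0)]$ and expand around small increments. The cleanest route is to use the generator identity: for $f$ with $f, f', f''$ bounded, $t\mapsto \E f(u\tilde X_t)$ is differentiable with $\frac{d}{dt}\E f(u\tilde X_t) = \E[(\mathcal{A}_u f)(u\tilde X_t)]$, where $\mathcal{A}_u$ is the generator of the rescaled process $u\tilde X_t$, namely $(\mathcal A_u g)(x) = u\mu g'(x) + \tfrac{u^2\sigma^2}{2} g''(x) + \int [g(x+uz) - g(x) - g'(x)\trunc(uz)]\,\tilde\nu(dz)$. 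Since $f(0)=0$ and the first and second derivatives of $f$ are bounded, we may Taylor-expand the integrand and the smooth part, obtaining $(\mathcal A_u f)(0) = \tfrac{u^2\sigma^2}{2}f''(0) + u^\alpha[r_1^+\tilde{\mathcal J}_\alpha^+ f(0) + r_1^-\tilde{\mathcal J}_\alpha^- f(0)] + (\text{lower order})$, after a change of variables $uz \mapsto w$ in the jump integral which produces the factor $u^{\alpha_m}$ for each stable component and leaves the dominant term governed by $\alpha = \alpha_1$. Integrating from $0$ to $t$ and controlling the error $\E[(\mathcal A_u f)(u\tilde X_s)] - (\mathcal A_u f)(0)$ over $s\in[0,t]$ then yields each of the four claims.

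For part (ii), the jump contribution to $(\mathcal A_u f)(0)$ is $O(u^\alpha)$ which is $o(u^2)$ when $u\to\infty$, so the $\tfrac{u^2\sigma^2}{2}f''(0)$ term dominates; the error term from replacing $\E[(\mathcal A_u f)(u\tilde X_s)]$ by its value at $0$ is controlled because $\tilde X_s$ is small in probability for small $s$ and the relevant functions are Lipschitz. Part (iii) is the same computation one order deeper: when $f''(0)=0$ we must Taylor-expand to fourth order, and the leading Gaussian contribution becomes $\tfrac{t^2 u^4\sigma^4}{8}f^{(4)}(0)$ (this is the second iterate of the heat semigroup, so it naturally carries a $t^2$), while the jump part still only contributes $O(tu^\alpha)$. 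Part (iv) drops the Gaussian term entirely ($\sigma^2=0$) and only needs a crude bound: the change of variables in the jump integral over the stable measures produces $u^{\alpha_m}$ for $\alpha_m\ne 1$, and for $\alpha_m = 1$ a logarithmic factor appears from the truncation term $\trunc(uz)$ versus $\trunc(z)$; taking the worst exponent $\alpha\vee 1$ and bounding all the $f$-differences by $\|f\|_\infty + \|f'\|_\infty + \|f''\|_\infty$ gives the stated inequality, uniformly in $u>1$, $t\ge 0$, by linearity of the bound in $t$ (iterating the generator identity, or directly bounding $|\E f(u\tilde X_t) - f(0)| \le \int_0^t |\E (\mathcal A_u f)(u\tilde X_s)|\,ds$).

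The main obstacle is part (i), specifically the sharpening from the crude $O(tu^\alpha)$ bound to the precise $o(tu^\alpha) + tu^\alpha[r_1^+\tilde{\mathcal J}_\alpha^+ f(0) + r_1^-\tilde{\mathcal J}_\alpha^- f(0)]$ under the constraint $u \le (1-\lambda)\eta/(\sigma\sqrt{8t|\log t|})$. The issue is that $f$ vanishes on $[-\eta,\eta]$, so $f(u\tilde X_t) = 0$ unless $|\tilde X_t| > \eta/u$; the constraint on $u$ is precisely calibrated so that the Gaussian part $\sigma B_t$ of $\tilde X_t$ has negligible probability of reaching the level $\eta/u$ (since $P(|\sigma B_t| > \eta/u)$ involves $\exp(-\eta^2/(2\sigma^2 t u^2))$ which is summable/negligible under $u^2 t |\log t| \lesssim \eta^2$). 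Thus the event $\{f(u\tilde X_t)\ne 0\}$ is, up to negligible probability, driven by a single large jump of the $\alpha_1$-stable component exceeding $\eta/u$ in absolute value, whose rate is $\asymp tu^{\alpha_1}$. Making this precise requires a careful decomposition of $\tilde X_t$ into the part with jumps below a threshold (absorbed into a near-Gaussian remainder via a Bernstein-type tail bound) and the compound-Poisson part of large jumps, then showing the one-large-jump term contributes exactly $tu^\alpha[r_1^+\tilde{\mathcal J}_\alpha^+ f(0) + r_1^-\tilde{\mathcal J}_\alpha^- f(0)] + o(tu^\alpha)$ while two-or-more-jump terms are $o(tu^\alpha)$ and the small-jump error is negligible under the stated scaling — this is where the constant $\sqrt 8$ and the role of $|\log t|$ enter, via Gaussian tail estimates, and it will require the most care. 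The uniformity of the $o(\cdot)$ and $\mathcal O(\cdot)$ terms on compacts in $\Theta$ follows by tracking that all the implicit constants depend only continuously on $(\mu, \sigma^2, \boldsymbol\alpha, \boldsymbol r)$.
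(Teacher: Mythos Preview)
Your plan is correct in substance and for parts (ii)--(iv) it matches the paper's argument almost line by line: apply It\^o's formula (your ``generator identity''), identify the dominant term, and control $\E[(\mathcal A_u f)(u\tilde X_s)] - (\mathcal A_u f)(0)$ using $u\tilde X_s \to 0$ in probability. One small slip: in your generator formula the compensating term should be $u\,g'(x)\,\trunc(z)$, not $g'(x)\,\trunc(uz)$; the discrepancy between the two is exactly the re-centering drift $\mu_u$ that the paper tracks explicitly and that produces the $u^{(\alpha-1)\vee 0}(1+\log u)$ factor in part (iv).

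For part (i) you take a genuinely different route. You propose a one-large-jump decomposition: split $\tilde X_t$ into small jumps plus a compound Poisson part, control the small-jump-plus-Gaussian piece via a Bernstein-type tail bound, and extract the leading $tu^\alpha$ term from the single-jump event. This works and makes the probabilistic mechanism transparent. The paper instead uses a two-step It\^o bootstrap: first apply It\^o's formula to a smooth majorant of $\mathds 1_{|x|>\lambda\eta}$ to get the rough estimate $P(|u\tilde X_s|>\eta)=\mathcal O(su^\alpha)$ (with the Gaussian part handled by exactly the tail bound you describe, which is where the constraint $u\le (1-\lambda)\eta/(\sigma\sqrt{8t|\log t|})$ enters); then apply It\^o's formula to $f$ itself, where the diffusion and drift terms involve $\E f''(u\tilde X_s)$ and $\E f'(u\tilde X_s)$, which are bounded by $\|f''\|_\infty P(|u\tilde X_s|>\eta)$ and $\|f'\|_\infty P(|u\tilde X_s|>\eta)$ since $f$ vanishes on $[-\eta,\eta]$. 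Integrating, these contribute $\mathcal O(u^2 t \cdot tu^\alpha)=o(tu^\alpha)$. The jump integral term is $\sum_m u^{\alpha_m}\int_0^t \E[\mathcal J_{\alpha_m}^\pm f(u\tilde X_s)]\,ds$, and since $\mathcal J_{\alpha_m}^\pm f$ is bounded and continuous, this converges to the stated limit. The paper's approach avoids the explicit small/large-jump splitting and the combinatorics of multiple-jump events; your approach is more hands-on but yields the same intuition about why the $\sqrt 8$ and $|\log t|$ appear.
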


The case (i), which is exploited in the proofs several times, imposes a subtle upper bound on $u$. 
Although this bound need not be sharp, the Lemma will not hold for $u = \tau/\sqrt{t|\log t|}$ if $\tau$ is too large.
To make this plausible, note that for an $\alpha$-stable process $S_t^\alpha$, the probability $P(|S_t^\alpha| \geq \eta \sqrt{t|\log t|}/\tau)$ tends to zero as $t\to 0$, roughly polynomially in $t$.
On the other hand, for the Brownian motion, $P(|B_t|>\eta \sqrt{t|\log t|} / \tau) = P(|B_1|> \eta \sqrt{|\log t|}/\tau)\to 0$ polynomially as well, but the polynomial order of this decay will depend on the specific value of $\tau$.
For the jump term to dominate, as in case (i) of Lemma \ref{lem:moments}, $\tau$ must be small.
The uniformity w.r.t.\ $\theta$ of the previous results will be used later on to derive the consistency of the estimator.

Another ingredient to obtain a central limit theorem is a bias bound, i.e.\ a bound on the error of approximating $\E\f(u_n\Delta_{n,i} X)$ by $\E_\theta \f(u_n\tilde{Z}_h)$.
For two random variables $X$ and $Y$, recall the definition of the 1-Wasserstein metric $d_W$ and the total variation distance $d_{TV}$ given by \begin{align*}
	d_{TV}(X,Y) &= \sup_{g: \|g\|_\infty\leq 1} \left| \E g(X) - \E g(Y) \right|, \\
	d_{W}(X,Y)  &= \sup_{g: \|g'\|_\infty\leq 1} \left| \E g(X) - \E g(Y) \right|,
\end{align*}
where the supremum is taken over all bounded resp.\ Lipschitz continuous, measurable functions $g:\R\to\R$.
These distances are used in the proof of the following Lemma, which quantifies the error of approximation implied by the local stability assumption \eqref{eqn:def-locstable}. 

\begin{lemma}\label{lem:bias-bound}
	Let $X_t, \tilde{X}_t$ be two Lévy processes with characteristic triplets $(\mu,\sigma^2,\nu)$ and $(\mu, \sigma^2, \tilde{\nu})$, respectively. 
	Suppose furthermore that for some $\rho\in(0,1\wedge \alpha)$, 
	\begin{align*}
		\left| \nu((z,\infty)) - \tilde{\nu}((z,\infty)) \right| &\leq L |z|^{-\rho}, \quad z\in(0,1),\\
		\left| \nu((-\infty,z)) - \tilde{\nu}((-\infty,z)) \right| &\leq L |z|^{-\rho}, \quad z\in(-1,0).
	\end{align*}
	There exists a constant $\tilde{C}$ depending on $L$, $\rho$, and $\theta$, such that for any differentiable function $f:\R\to\R$, and any $u> 1$,
	\begin{align}
		\label{eqn:fu-firstbound}\left| \E f(uX_t) - \E f(u\tilde{X}_t - ut\bar\zeta) \right| &\leq \tilde{C} (\|f\|_\infty +\|f'\|_\infty +\|f'\|_{L_1})(tu^{\rho}+t^2u^{\alpha+1}),
	\end{align}
	where $\bar\zeta = \int \trunc(z) (\nu-\tilde{\nu})(dz) \in \R$. 
	The constant $\tilde{C}$ is bounded on compacts in $\theta\in\Theta$, $\rho\in(0,1\wedge \alpha)$, and $L\geq 0$. 
\end{lemma}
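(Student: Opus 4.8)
The plan is to split the approximation error into two pieces: a contribution coming from small jumps in $(-1,1)$, where the difference $\nu - \tilde\nu$ is controlled by the local stability bound $|\cdot|^{-\rho}$, and a contribution from the large jumps in $(-1,1)^c$, where $\nu$ and $\tilde\nu$ may differ arbitrarily but both are finite measures of total mass $\mathcal{O}(1)$. The drift correction $\bar\zeta = \int\xi(z)(\nu-\tilde\nu)(dz)$ is introduced precisely so that, after passing to compensated integrals, the two Lévy--Itô representations of $X_t$ and $\tilde X_t$ differ only in the jump part. First I would write both processes via \eqref{eqn:levy-ito} on a common probability space, with a common Brownian motion $\sigma B_t$, and realize the jump parts as independent integrals against Poisson random measures $N$ and $\tilde N$; the Brownian part then cancels exactly, so the problem reduces to comparing $f$ applied to $uX_t$ versus $uY_t$ where $Y_t = \tilde X_t - ut\bar\zeta$ and $X_t - Y_t$ is a pure-jump difference.

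The core estimate is a coupling bound on $d_W$ (and $d_{TV}$) between the jump parts. I would couple the two jump measures by a maximal coupling on each infinitesimal piece: for small jumps, use the Wasserstein-type control $\int_{|z|<1} |z| \, |\nu - \tilde\nu|(dz)$, which after integrating the tail bound $|z|^{-\rho}$ by parts (against $|z|$, using $\rho < 1$) is finite and of order $\mathcal{O}(1)$; the expected number of discrepancy jumps in time $t$ scales like $t$, each of size $\mathcal{O}(1)$ for Wasserstein and weighted by the rate for total variation, giving the linear-in-$t$ term $tu^\rho$ after rescaling by $u$. The subtlety is that $|\nu - \tilde\nu|$ restricted to small jumps need not be finite as a measure — only its integral against $|z|$ is — so the $d_{TV}$ comparison has to be done after one step of compensation, which is where the quadratic term $t^2 u^{\alpha+1}$ enters: compensating the small jumps of $\tilde X$ itself contributes a term controlled by the second moment of the compensated stable-type integral over $[0,t]$, which is $\mathcal{O}(t)$ for the variance but picks up an extra factor $tu^{\alpha}$ after one expands $f$ around the compensated position and bounds the remainder using $\|f'\|_\infty$ and $\|f'\|_{L_1}$ (the $L_1$ norm appears because one integrates $f'$ against the density of a stable-type increment, whose sup-norm scales like $t^{-1/\alpha}$, and $t \cdot t^{-1/\alpha} \cdot (\text{number of jumps} \sim tu^\alpha)$ combines to $t^2 u^{\alpha+1}$). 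For the large jumps in $(-1,1)^c$, both $\nu|_{(-1,1)^c}$ and $\tilde\nu|_{(-1,1)^c}$ are finite, so a direct maximal coupling shows $X$ and $Y$ agree except on an event of probability $\mathcal{O}(t)$, contributing $t \|f\|_\infty$ to $d_{TV}$, hence also $\mathcal{O}(t)$, absorbed into $tu^\rho$ since $u>1$ and $\rho>0$ — wait, one must be careful that $u^\rho \geq 1$, which holds; so this piece is dominated.

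Assembling the pieces: $|\E f(uX_t) - \E f(uY_t)|$ is bounded by $u$ times the Wasserstein distance between the rescaled jump increments plus $\|f\|_\infty$ times the total variation distance, and a telescoping through the intermediate process (common Brownian part, small-jump discrepancy, large-jump discrepancy) yields \eqref{eqn:fu-firstbound}. Uniformity of $\tilde C$ on compacts in $(\theta,\rho,L)$ follows because every constant that appears — the integral $\int_{|z|<1}|z|^{-\rho+1}\,dz = 1/(2-\rho)$, the total masses of the large-jump parts, the moments of the stable-type increments, and the scaling exponents $t^{-1/\alpha}$ — depends continuously on these parameters and stays bounded as long as $\alpha$ is bounded away from $0$ and $2$, $\rho$ away from $1\wedge\alpha$, and $L$ bounded. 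The main obstacle I anticipate is the bookkeeping for the $t^2 u^{\alpha+1}$ term: one must simultaneously compensate the small jumps, Taylor-expand $f$ to first order, and control the second-order remainder by a quantity that is genuinely $\mathcal{O}(t^2 u^{\alpha+1})$ rather than merely $\mathcal{O}(tu^\alpha)$, which forces one to track the $L_1$-norm of $f'$ separately from its sup-norm and to use the self-similarity of the stable-type part carefully.
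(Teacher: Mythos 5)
The high-level plan (compare the two processes after cancelling the common Brownian part, split the jump discrepancy by jump size, and track $\|f\|_\infty$, $\|f'\|_\infty$, $\|f'\|_{L_1}$ separately) is the right shape, but several of the concrete mechanisms you propose either do not follow from the hypotheses or do not produce the stated exponents.

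The central gap is in the small-jump piece. You write that the Wasserstein contribution of jumps in $(-1,1)$ is controlled by $\int_{|z|<1}|z|\,|\nu-\tilde\nu|(dz)$ and that integrating the tail bound by parts shows this is $\mathcal{O}(1)$. Integration by parts against the tail bound $|\nu((z,\infty))-\tilde\nu((z,\infty))|\le L|z|^{-\rho}$ controls the \emph{signed} integral $\int_a^1 z\,(\nu-\tilde\nu)(dz)$, not the \emph{variation} integral $\int_a^1 |z|\,|\nu-\tilde\nu|(dz)$. The hypotheses place no constraint whatsoever on the variation measure $|\nu-\tilde\nu|$: one can have $\nu$ oscillating wildly against $\tilde\nu$ at fine scales while the tail functions stay uniformly close, so $|\nu-\tilde\nu|$ may well have infinite $|z|$-mass on $(0,1)$. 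You do flag ``only its integral against $|z|$ is [finite],'' but even that is not implied by the assumptions. The paper's proof circumvents this by never forming $|\nu-\tilde\nu|$: the medium-jump Wasserstein distance is reduced to a compound-Poisson comparison and then bounded via the CDF formula $d_W(U_1,\tilde U_1)=\int |P(U_1\le v)-P(\tilde U_1\le v)|\,dv$, which touches only the tail functions that the hypothesis controls directly. Additionally, an extra $\epsilon\to 0$ cut-off on the very smallest jumps is introduced so their contribution is controlled by second moments of the compensated integrals rather than by any variation bound.

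Two further issues. First, the cut point should be at $1/u$, not at $1$: the whole argument is after rescaling by $u$, so it is jumps of size $\gtrsim 1/u$ that are ``order one'' for $f(u\,\cdot)$. The paper's three-way split at $\epsilon$, $1/u$, and $1$ is what makes the scaling in $u$ come out right; a split at $1$ leaves the jumps of size between $1/u$ and $1$ (which are rescaled to sizes between $1$ and $u$) in the wrong bin. Second, your heuristic for the $t^2u^{\alpha+1}$ term does not balance: $t\cdot t^{-1/\alpha}\cdot t u^\alpha = t^{2-1/\alpha}u^\alpha$, which matches $t^2u^{\alpha+1}$ only under the extraneous relation $u=t^{-1/\alpha}$. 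In the paper, $t^2u^{\alpha+1}$ arises quite differently: from the generator comparison for the large-jump part, where $\|(\tilde{\mathcal{J}}^3 f_{u,t})'\|_\infty = \mathcal{O}(u^{\alpha+1}\|f'\|_\infty)$ is multiplied against the time-integrated Wasserstein distance $\int_0^t d_W(J_s^{(1/u,1]},\tilde J_s^{(1/u,1]})\,ds=\mathcal{O}(t^2)$. Likewise, $\|f'\|_{L_1}$ enters through Fubini applied to $\int |f'_{u,t}(z)|\,dz$ in the generator difference, not through a sup-norm bound on a stable density as you suggest. So while your decomposition is in the right spirit, the estimate for the small jumps is not justified and the mechanism generating the stated exponents is different from what would actually close the argument.
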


\begin{cor}\label{cor:bias-complete}
	Let $f\in \mathcal{C}^3$ such that $f, f', f'', f'''$ are bounded and $f'\in L_1$.
	Let $X_t, \tilde{X}_t$ be two Lévy processes with characteristic triplets $(\mu,\sigma^2 ,\nu)$ and $(0,\sigma^2,\tilde{\nu})$, respectively. 
	Suppose that $\nu$, $\tilde{\nu}$ satisfy the conditions of Lemma \ref{lem:bias-bound}. 
	Then, as $t\to 0$, 
	\begin{align}
		\label{eqn:bias-withdrift}|\E f(uX_t) - \E f(u\tilde{X}_t)| &\leq \tilde{C}\left(tu^{\rho} + t^2u^{2\vee(\alpha+1)}\right)(1+\log(u))).
	\end{align}
	The constant $\tilde{C}$ is bounded on compacts in $\mu\in\R$, $\theta\in\Theta$, $\rho\in(0,1\wedge \alpha)$, and $L\geq 0$.
\end{cor}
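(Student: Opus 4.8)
\textbf{Proof plan for Corollary \ref{cor:bias-complete}.}
The plan is to combine Lemma \ref{lem:bias-bound} with a perturbation estimate that removes the deterministic drift correction $ut\bar\zeta$ appearing there. Write $\tilde X_t^\circ$ for the Lévy process with triplet $(0,\sigma^2,\tilde\nu)$ and note that the process with triplet $(\mu,\sigma^2,\tilde\nu)$ used implicitly differs only by the deterministic shift $\mu t$; so the first step is to absorb the drift $\mu$ and the correction $\bar\zeta$ into a single deterministic translation. Concretely, I would invoke Lemma \ref{lem:bias-bound} applied to $X_t$ (triplet $(\mu,\sigma^2,\nu)$) and to the process with triplet $(\mu,\sigma^2,\tilde\nu)$, obtaining
\begin{align*}
	\bigl|\E f(uX_t) - \E f(u\tilde X_t^\circ + u(\mu t - t\bar\zeta))\bigr| \leq \tilde C(\|f\|_\infty+\|f'\|_\infty+\|f'\|_{L_1})(tu^\rho + t^2 u^{\alpha+1}),
\end{align*}
so it remains to control the effect of the deterministic shift $c_{t,u} := u(\mu t - t\bar\zeta)$ on $\E f(u\tilde X_t^\circ + \cdot)$.

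The second step is a smoothing/Taylor argument for this shift. Since $f$ is $\mathcal{C}^1$ with $\|f'\|_\infty<\infty$, a crude bound gives $|\E f(u\tilde X_t^\circ + c_{t,u}) - \E f(u\tilde X_t^\circ)| \leq \|f'\|_\infty |c_{t,u}| \lesssim u t$. This alone is not small enough relative to the target $tu^\rho(1+\log u)$ when $u$ is large (recall $u\to\infty$), so I would instead exploit that $u\tilde X_t^\circ$ has a density which is bounded and whose derivative is integrable, at least after conditioning on a Gaussian component, so that translation by $c_{t,u}$ changes the law in total variation by at most $O(|c_{t,u}| / (\text{scale of }\sigma\sqrt t\,u))$; but the cleanest route is to write $\E f(u\tilde X_t^\circ + c) - \E f(u\tilde X_t^\circ) = \int_0^1 c\,\E f'(u\tilde X_t^\circ + sc)\,ds$ and bound $\E f'(u\tilde X_t^\circ + sc)$ using that $f'\in L_1$ together with the boundedness of the density of $u\tilde X_t^\circ$ coming from its Gaussian part $u\sigma B_t$ of scale $u\sigma\sqrt t$. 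This yields $|\E f'(u\tilde X_t^\circ+sc)| \lesssim \|f'\|_{L_1}/(u\sigma\sqrt t)$, hence $|\E f(u\tilde X_t^\circ+c)-\E f(u\tilde X_t^\circ)| \lesssim |c|/(u\sqrt t) \lesssim u t/(u\sqrt t) = \sqrt t \leq t u^\rho$ for $u\geq 1$ — which is dominated by the right-hand side. (Alternatively one uses Lemma \ref{lem:moments}(iv)-type bounds or absorbs the $(1+\log u)$ factor, which is exactly where it enters.)

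The third step is bookkeeping of the exponents. The term $t^2 u^{\alpha+1}$ from Lemma \ref{lem:bias-bound} must be replaced by $t^2 u^{2\vee(\alpha+1)}$: this extra allowance covers the regime $\alpha<1$, where the translation-error analysis of step two, or the second-order Taylor expansion of $f$ used to get a sharper bound on the drift contribution via $f''$, produces a term of order $t^2 u^2$ rather than $t^2 u^{\alpha+1}$. Here the $\mathcal{C}^3$ assumption and boundedness of $f''$, $f'''$ enter: one Taylor-expands $f(u\tilde X_t^\circ + c)$ to second order, the linear term contributes $c\cdot\E f'(u\tilde X_t^\circ)$ handled as above, and the quadratic remainder contributes $\tfrac12 c^2 \|f''\|_\infty \lesssim t^2 u^2 \|f''\|_\infty$. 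The logarithmic factor $(1+\log u)$ is then attached to absorb the borderline contributions (and to match the form in which the corollary is later applied, e.g.\ in Lemma \ref{lem:moment-clt}). Finally, uniformity of $\tilde C$ on compacts in $\mu,\theta,\rho,L$ follows because every constant invoked — from Lemma \ref{lem:bias-bound}, from the density bound on $u\sigma B_t$, and from $\|f^{(k)}\|$ — is continuous in these parameters and the compactness rules out degeneracy (in particular $\sigma^2$ stays bounded away from $0$ on the relevant compacts, or if $\sigma^2=0$ is allowed one falls back on Lemma \ref{lem:moments}(iv)).

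\textbf{Main obstacle.} The delicate point is step two: the naive Lipschitz bound $\|f'\|_\infty |c_{t,u}|$ on the drift perturbation is of order $ut$, which is \emph{not} negligible compared to $tu^\rho$ since $u\to\infty$ and $\rho<1$. One genuinely needs to use the regularizing effect of the Gaussian component (or the $L_1$ bound on $f'$) to gain the factor $1/(u\sqrt t)$, and to make sure this argument is uniform in $\theta$; handling the case $\sigma^2$ small or zero, and correctly producing the $t^2 u^{2\vee(\alpha+1)}(1+\log u)$ bound rather than something worse, is where the real care is required.
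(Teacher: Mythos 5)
Your first step --- invoking Lemma \ref{lem:bias-bound} to reduce the comparison to the effect of a deterministic shift $c := ut(\mu - \bar\zeta)$ on $\E f(u\tilde X_t)$ --- is exactly what the paper does. The gap is in step two, and, ironically, you put your finger on it in the ``main obstacle'' paragraph and then offer a fix that does not close it.

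Writing $\E f(u\tilde X_t + c) - \E f(u\tilde X_t) = \int_0^1 c\,\E f'(u\tilde X_t + sc)\,ds$ and bounding $|\E f'(u\tilde X_t + sc)| \lesssim \|f'\|_{L_1}/(u\sigma\sqrt{t})$ via the density of the Gaussian part gives $|c|/(u\sqrt{t}) \lesssim \sqrt{t}$; you then assert $\sqrt{t} \leq t u^\rho$. That inequality is false in the regime that matters: it requires $u \geq t^{-1/(2\rho)}$, whereas Condition \nameref{ass:U} pins $u \asymp (t|\log t|)^{-1/2}$ and $\rho<1$ makes $1/(2\rho) > 1/2$. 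Concretely, $\sqrt{t}\,u^\rho \asymp t^{(1-\rho)/2}|\log t|^{-\rho/2}\to 0$, not $\geq 1$. Nor can $\sqrt{t}$ hide in the other target term: with $u\asymp t^{-1/2}$ one has $t^2u^{2\vee(\alpha+1)}(1+\log u) \asymp t^{\,2-(2\vee(\alpha+1))/2}\log(1/t)$, whose exponent exceeds $1/2$ for every $\alpha\in(0,2)$, hence is $o(\sqrt{t})$. The density route also needs $\sigma$ bounded away from $0$, which conflicts with the claimed uniformity on compacts of $\Theta$, where $\sigma^2\geq 0$.

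The paper's argument at this point is genuinely different and much sharper: it does not use the density of $u\tilde X_t$ at all, but shows directly that $\E f'(u\tilde X_t)$ is of order $tu^{\alpha\vee 1}(1+\log u)$. Split $\tilde X_t = \sigma B_t + \tilde J_t$ and convolve with the Gaussian law, setting $g_{[u]}(x) = \E g(u\sigma B_t + x)$, so that $\|g_{[u]}^{(k)}\|_\infty \leq \|g^{(k)}\|_\infty$ and $\E f'(u\tilde X_t) = \E f'_{[u]}(u\tilde J_t)$. Lemma \ref{lem:moments}(iv), applied to the smoothed function $f'_{[u]}$ and the driftless, diffusionless jump process $\tilde J_t$, then yields $\E f'(u\tilde X_t) = \mathcal{O}\bigl(tu^{\alpha\vee 1}(1+\log u)(\|f'\|_\infty + \|f''\|_\infty + \|f'''\|_\infty)\bigr)$. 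Feeding this into the second-order Taylor bound $|\E f(u(\tilde X_t + ta)) - \E f(u\tilde X_t)| \leq |uta|\,|\E f'(u\tilde X_t)| + \|f''\|_\infty t^2u^2a^2$, the linear term becomes $t^2 u^{(\alpha+1)\vee 2}(1+\log u)$ and the quadratic term $t^2u^2$, giving exactly $t^2u^{2\vee(\alpha+1)}(1+\log u)$, uniformly in $\sigma^2\geq 0$. You do mention ``Lemma \ref{lem:moments}(iv)-type bounds'' as an alternative, but this is not a spare route --- it is \emph{the} route: the essential maneuver your proposal omits is to apply part (iv) to the Gaussian-smoothed derivative $f'_{[u]}$ against the pure-jump component.
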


Note that the presented result of \ref{cor:bias-complete} can not be directly formulated in terms of $d_{TV}$ or $d_{W}$, distinguishing it from the results of \cite{mariucci2017wasserstein}.
An alternative bound on the total variation distance between $X_t$ and $\tilde{Z}_t$ is presented by \cite[Proposition 4]{Clement2018} and \cite[Proposition 2]{Amorino2019}, stating that $d_{TV}(X_t, \tilde{Z}_t) \leq C t^{1 \wedge \frac{1}{\alpha}} \log(t)$ as $t\to 0$.
Their assumptions on the Lévy measure $\nu(dz)$ imply that our condition \eqref{eqn:def-locstable} holds, with $\rho\leq (\alpha-1)\vee 0$.
Thus, if $\alpha>1$ and $u\ll t^{-1/2}$, our bound \eqref{eqn:bias-withdrift} is sharper since $tu^{\alpha-1}\ll t^{\frac{3}{2}-\frac{\alpha}{2}} \ll t^{\frac{1}{\alpha}}$.
In the case $\alpha\leq 1$, our bound is of the same order of magnitude as the one presented by \cite{Clement2018} and \cite{Amorino2019}.
Furthermore, our result may also be applied in the case $\rho>\alpha-1$.
However, we impose additional smoothness assumptions upon the considered function $f$, which is suitable for our statistical purposes because the moment functions are chosen by the statistician.

To state the remaining technical results, introduce the notation \begin{align*}
	\Lambda_n(\theta) 
		&= \diag(hu^2, hu^{\alpha_1},hu^{\alpha_1},hu^{\alpha_1},\ldots \\
		&\qquad\qquad \ldots,	hu^{\alpha_M},hu^{\alpha_M},hu^{\alpha_M})\in \R^{(3M+1) \times (3M+1)},\\
	\tilde{\Lambda}_n(\theta) 
		&= \diag(hu^2, \sqrt{hu^{\alpha_1}},\ldots,\sqrt{hu^{\alpha_1}})\in \R^{(3M+1) \times (3M+1)} ,
\end{align*}
such that 
\begin{align*}
	\bar{\Lambda}_n(\theta) 
	&= \tilde{\Lambda}_n^{-1}(\theta)\Lambda_n(\theta) \\
	&= \sqrt{h}\,\diag(\sqrt{h}^{-1},  u^{\alpha_1 - \frac{\alpha_1}{2}}, u^{\alpha_1 - \frac{\alpha_1}{2}},  u^{\alpha_1 - \frac{\alpha_1}{2}},\ldots \\
	&\qquad\qquad \ldots,  u^{\alpha_M - \frac{\alpha_1}{2}}, u^{\alpha_M - \frac{\alpha_1}{2}}, u^{\alpha_M - \frac{\alpha_1}{2}}).
\end{align*}

Corollary \ref{cor:bias-complete} and Lemma \ref{lem:moments} allow us to derive the following central limit theorem for the estimated moments.
In particular, we use Lemma \ref{lem:moments} to control the sampling variance, and Corollary \ref{cor:bias-complete} to control the bias.

\begin{lemma}\label{lem:moment-clt}
	Let $n h_n = T=1$ constant, i.e.\ $h_n= 1/n$, and choose $u_n\to\infty$ according to \nameref{ass:U}.
	Let $\f$ satisfy \nameref{ass:F1} and \nameref{ass:F2}, and suppose that the Lévy process $X_t$ satisfies \eqref{eqn:def-locstable} with some $\rho<\alpha/2$.  
	Then, as $n\to\infty$,
	\begin{align*}
		\tilde{\Lambda}_n^{-1}(\theta)\frac{1}{\sqrt{n}}\sum_{i=1}^n \f(u_n\Delta_{n,i} X) - \E_\theta\f(u_n\tilde{Z}_h)   \quad\wconv\quad \mathcal{N}\left(0,\Sigma(\theta)\right).
	\end{align*}
\end{lemma}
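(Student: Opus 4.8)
The plan is to decompose the centered moment vector into a variance part and a bias part, handle the bias via Corollary~\ref{cor:bias-complete}, and establish asymptotic normality of the variance part by a triangular-array CLT. Write
\begin{align*}
	\tilde\Lambda_n^{-1}\frac{1}{\sqrt n}\sum_{i=1}^n \bigl[\f(u_n\Delta_{n,i}X) - \E_{\theta}\f(u_n\tilde Z_h)\bigr]
	&= \tilde\Lambda_n^{-1}\frac{1}{\sqrt n}\sum_{i=1}^n \bigl[\f(u_n\Delta_{n,i}X) - \E\f(u_n\Delta_{n,i}X)\bigr]\\
	&\qquad + \sqrt n\,\tilde\Lambda_n^{-1}\bigl[\E\f(u_n\Delta_{n,i}X) - \E_{\theta}\f(u_n\tilde Z_h)\bigr].
\end{align*}
For the bias term, Corollary~\ref{cor:bias-complete} gives, componentwise, a bound of order $(h_nu_n^\rho + h_n^2 u_n^{2\vee(\alpha+1)})(1+\log u_n)$. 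Multiplying by $\sqrt n = h_n^{-1/2}$ and by $\tilde\Lambda_n^{-1}$, whose diagonal entries are $h_n^{-1}u_n^{-2}$ in the first coordinate and $h_n^{-1/2}u_n^{-\alpha_1/2}$ in the others, I would check that each resulting term tends to zero. The first coordinate of $\f$ is approximately quadratic, so Corollary~\ref{cor:bias-complete} actually yields a smaller bias there (one should combine it with part~(ii) of Lemma~\ref{lem:moments}); the remaining coordinates use $\rho<\alpha/2$ together with condition~\nameref{ass:U} to see that $h_n^{-1}u_n^{\rho-\alpha_1/2}\log u_n = n^{1-\rho/2}(\log n)^{\dots}\cdot n^{-(1-\rho/2)}\to 0$, and similarly the $h_n^2u_n^{\dots}$ contribution is dominated. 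This is mostly bookkeeping with the rates.

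For the centered variance part, the summands $\tilde\Lambda_n^{-1}\f(u_n\Delta_{n,i}X)/\sqrt n$ form a triangular array that is row-wise i.i.d., so I would apply the Lindeberg--Feller CLT for triangular arrays. The two things to verify are the asymptotic covariance and the Lindeberg condition. For the covariance, I would compute $\operatorname{Cov}(\tilde\Lambda_n^{-1}\f(u_n\Delta_{n,i}X))$ and show it converges to $\Sigma(\theta)$: for the $(1,1)$ entry one uses that $f_1$ is symmetric with $f_1(0)=f_1'(0)=0\neq f_1''(0)$, so $f_1(u_n\Delta_{n,i}X)\approx \tfrac{1}{2}f_1''(0)u_n^2(\Delta_{n,i}X)^2$ and $\operatorname{Var}\bigl(f_1(u_n\Delta_{n,i}X)\bigr)\sim \tfrac{1}{2}\sigma^4 f_1''(0)^2 h_n^2 u_n^4$ after rescaling by $(h_nu_n^2)^{-2}\cdot n^{-1}$; for the entries $j,k\ge 2$, the functions vanish near zero, so $\E f_jf_k(u_n\Delta_{n,i}X) \approx \E f_jf_k(u_n\tilde Z_h)$ by the bias bound, and then Lemma~\ref{lem:moments}(i) applied to the product $f_j\cdot f_k$ (which also vanishes near zero) gives $\E (f_jf_k)(u_n\tilde Z_h)\sim h_nu_n^{\alpha_1}(r_1^+\mathcal J_{\alpha_1}^+ + r_1^-\mathcal J_{\alpha_1}^-)(f_jf_k)(0)$, while $\E f_j(u_n\tilde Z_h)\E f_k(u_n\tilde Z_h) = O(h_n^2u_n^{2\alpha_1})$ is negligible after rescaling by $(h_nu_n^{\alpha_1})^{-1}$; the cross terms $(1,k)$ vanish because $f_1$ is even and $f_1''(0)$ shows up only at the quadratic order, which is asymptotically orthogonal to the jump-driven fluctuations. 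I would also need to verify that the dominant Brownian contribution to $f_j(u_n\Delta_{n,i}X)$ for $j\ge 2$ is negligible — this is exactly what Lemma~\ref{lem:moments}(i) secures under \nameref{ass:U}, and one needs a second-moment version, i.e. applying (i) to $f_j^2$.

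The Lindeberg condition reduces to showing $n\,\E\bigl[\|\tilde\Lambda_n^{-1}\f(u_n\Delta_{n,i}X)/\sqrt n\|^2 \mathbf 1\{\|\cdot\|>\veps\}\bigr]\to 0$; since $\|f_j\|_\infty<\infty$, the first coordinate is bounded by $\|f_1\|_\infty (h_nu_n^2)^{-1}n^{-1/2}\to 0$ deterministically (as $u_n=o(\sqrt n)$ gives $h_nu_n^2 = u_n^2/n\to 0$, wait — one checks $(h_nu_n^2)^{-1}n^{-1/2} = n^{1/2}/u_n^2\cdot n^{-1/2}=u_n^{-2}\to 0$), and likewise the coordinates $j\ge 2$ are bounded by $\|f_j\|_\infty (h_nu_n^{\alpha_1})^{-1/2}n^{-1/2} = \|f_j\|_\infty u_n^{-\alpha_1/2}\to 0$; hence each rescaled summand is uniformly bounded by a null sequence, so the indicator is eventually $0$ and Lindeberg holds trivially. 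The main obstacle, I expect, is the covariance computation for the $j\ge 2$ block: one must carefully combine the bias bound (Corollary~\ref{cor:bias-complete}, applied both to $f_j$ and to the products $f_jf_k$ — noting these products still satisfy the hypotheses of that corollary and of Lemma~\ref{lem:moments}(i)) with the leading-order asymptotics of Lemma~\ref{lem:moments}(i), and check that the $\tau<\eta/(\sigma\sqrt 8)$ restriction in \nameref{ass:U} is precisely what makes the hypothesis $u_n\le (1-\lambda)\eta/(\sigma\sqrt{8h_n|\log h_n|})$ of Lemma~\ref{lem:moments}(i) hold for $f_j$ and for $f_jf_k$ simultaneously (their vanishing radius is the same $\eta$), so that the Brownian part genuinely does not contribute at the relevant order. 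Once the covariance is identified as $\Sigma(\theta)$ and Lindeberg is dispatched, the Cramér--Wold device plus the negligibility of the bias term completes the proof.
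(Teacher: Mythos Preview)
Your approach is essentially the same as the paper's: bias via Corollary~\ref{cor:bias-complete}, triangular-array Lindeberg--Feller, and covariance via Lemma~\ref{lem:moments} applied to the products $f_jf_k$. A few points where your sketch is rougher than necessary and where the paper's route is cleaner:

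\begin{itemize}
\item For the bias in the first coordinate you do \emph{not} need a separate argument: the bound $h_nu_n^\rho$ from Corollary~\ref{cor:bias-complete}, scaled by $\sqrt n\,(h_nu_n^2)^{-1}=n^{1/2}u_n^{-2}$, already tends to zero under \nameref{ass:U}.
\item For the $(1,1)$ covariance, rather than the heuristic Taylor expansion $f_1\approx\tfrac12 f_1''(0)u_n^2(\Delta X)^2$, the paper applies Lemma~\ref{lem:moments}(iii) directly to $f_1^2$, noting that $(f_1^2)(0)=(f_1^2)''(0)=0$ and $(f_1^2)^{(4)}(0)=6f_1''(0)^2$; this immediately gives $\E f_1^2(u_n\Delta X)=\tfrac34\sigma^4 u_n^4h^2 f_1''(0)^2+o(u_n^4h^2)$.
\item For the cross terms $(1,k)$ with $k\ge 2$, your ``asymptotic orthogonality'' explanation is vague; the paper simply observes that $f_1f_k$ vanishes on $[-\eta,\eta]$ (since $f_k$ does), so Lemma~\ref{lem:moments}(i) applies and yields $\E f_1f_k(u_n\Delta X)=O(h u_n^\alpha)$, which is killed by the scaling $(hu_n^2)^{-1}(hu_n^{\alpha})^{-1/2}$.
\item There is an arithmetic slip in your Lindeberg check for the first coordinate: $(h_nu_n^2)^{-1}n^{-1/2}=n^{1/2}/u_n^2$, not $u_n^{-2}$; but under \nameref{ass:U} this is $\asymp (\log n)/\sqrt n\to 0$, so your conclusion stands.
\end{itemize}
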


Note that the rate of convergence for the first moment $f_1$ is slower than for $f_j, j\geq 2$. 
This is due to our special choice of $f_j, j\geq 2$, which vanish near zero. 
Hence, these moments are primarily driven by the jump component, which is of a smaller order than the diffusion term.
On the other hand, the jump parameters $\alpha_m, r_m^\pm$ are harder to identify, i.e.\ $\partial_{\alpha_m} \E_\theta \f(u \tilde{Z}_h) \ll \partial_{\sigma^2} \E_\theta \f(u \tilde{Z}_h)$.
This is established in the following Lemma.

\begin{lemma}\label{lem:expect-derivative}
	Let $f\in\mathcal{C}^2(\R)$ be such that $f, f', f''$ are bounded.
	Let $\tilde{X}_t$ be a Lévy process with characteristic triplet $(0,\sigma^2, \tilde{\nu})$, parameterized by $\theta$ as in \eqref{eqn:parameter-vector}. Then, as $h\to 0$, $u\to\infty$, such that $hu^2\to 0$, 
	\begin{align}
		\label{eqn:expect-deriv-result-1}
		\begin{split}
				\partial_{\sigma^2} \E_\theta f(u\tilde{X}_t) 
			&= h \frac{u^2}{2} f''(0) + o(hu^2) , \\
				\partial_{r_m^\pm} \E_\theta f(u\tilde{X}_h) 
			&= h u^{\alpha_m}  \mathcal{J}_{\alpha_m}^\pm f(0) + o(hu^{\alpha_m}) + \mathcal{O}\left(hu^{\alpha_m\vee 1} \log u\right) \E_\theta f'(u\tilde{X}_h),\\
				\partial_{\alpha_m} \E_\theta f(u\tilde{X}_h) 
			&=  hu^{\alpha_m} (\log u) \left[ r_m^+\mathcal{J}_{\alpha_m}^+f(0) +r_m^-\mathcal{J}_{\alpha_m}^-f(0) \right] \\
			&\quad + o(hu^{\alpha_m} \log u) + \mathcal{O}\left(hu^{\alpha_m\vee 1} (\log u)^2\right) \E_\theta f'(u\tilde{X}_h),
		\end{split}
	\end{align}
	and, 
	\begin{align}
		&\quad \left(\partial_{\alpha_m} - \log(u)\left( r_m^+ \partial_{r_m^+} + r_m^- \partial_{r_m^-} \right)\right) \E_\theta f(u\tilde{X}_h) \nonumber \\
		\begin{split}
			&=hu^{\alpha_m} \partial_{\alpha_m} \left[r_m^+\mathcal{J}_{\alpha_m}^+ f(0) + r_m^-\mathcal{J}_{\alpha_m}^- f(0)\right] + o(hu^{\alpha_m}) \\
			&\qquad + \mathcal{O}\left(hu^{\alpha_m\vee 1} (\log u)^2\right) \E_\theta f'(u\tilde{X}_h).
		\end{split}\label{eqn:expect-deriv-result-2}
	\end{align}
	Moreover, if $f$ vanishes on $[-\eta,\eta]$ and $u$ satisfies Condition \nameref{ass:U}, \begin{align}
		\partial_{\sigma^2} \E_\theta f(0) = o(hu^\alpha).\label{eqn:expect-deriv-result-3}
	\end{align}
	All terms of the form $\mathcal{O}(\cdot)$ and $o(\cdot)$ are bounded resp.\ vanishing uniformly on compacts in $\Theta$.
\end{lemma}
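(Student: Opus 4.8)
The plan is to route everything through the infinitesimal generator of $\tilde X_t$ and a perturbation (Duhamel) argument, which collapses each partial derivative into a single expectation of a rescaled functional. Write $\mathcal L_\theta g=\tfrac{\sigma^2}{2}g''+\sum_{m=1}^M\big(r_m^+\mathcal J^+_{\alpha_m}+r_m^-\mathcal J^-_{\alpha_m}\big)g$ for the generator of the Lévy process with triplet $(0,\sigma^2,\tilde\nu)$, and let $P_t^\theta g(x)=\E_\theta g(x+\tilde X_t)$ be its semigroup, so that $\E_\theta f(u\tilde X_t)=(P_t^\theta g)(0)$ with $g=f(u\,\cdot)$. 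Every parameter $\vartheta\in\{\sigma^2,r_m^\pm,\alpha_m\}$ enters $\mathcal L_\theta$ linearly through one of the building blocks $\partial^2$, $\mathcal J^\pm_{\alpha_m}$, $\partial_{\alpha_m}\mathcal J^\pm_{\alpha_m}$, and $\partial_\vartheta\mathcal L_\theta$ is again a Fourier multiplier, hence commutes with $P_s^\theta$. The standard perturbation identity $\partial_\vartheta P_t^\theta g=\int_0^t P_{t-s}^\theta(\partial_\vartheta\mathcal L_\theta)P_s^\theta g\,ds=t\,P_t^\theta(\partial_\vartheta\mathcal L_\theta)g$ then gives
\begin{align*}
	\partial_\vartheta\E_\theta f(u\tilde X_t)=t\,\E_\theta\big[(\partial_\vartheta\mathcal L_\theta\,g)(\tilde X_t)\big].
\end{align*}
I would first justify this rigorously: for Schwartz $g$ it follows by differentiating $\E_\theta g(\tilde X_t)=\tfrac1{2\pi}\int\hat g(-\lambda)e^{t\psi_\theta(\lambda)}\,d\lambda$ and identifying $\partial_\vartheta\psi_\theta$ as the symbol of $\partial_\vartheta\mathcal L_\theta$, then extend to $g=f(u\,\cdot)$ with $f\in\mathcal C^2$ bounded by approximation, using that $\partial_\vartheta\mathcal L_\theta\,g$ is bounded in terms of $\|f\|_\infty,\|f''\|_\infty$ (with an extra logarithmic factor in the $\alpha_m$-case, since $\partial_{\alpha_m}\mathcal J^\pm_{\alpha_m}$ has a $\log$-weighted kernel).

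For $\vartheta=\sigma^2$ one gets $\partial_{\sigma^2}\mathcal L_\theta=\tfrac12\partial^2$, hence $\partial_{\sigma^2}\E_\theta f(u\tilde X_t)=\tfrac{tu^2}{2}\E_\theta f''(u\tilde X_t)$ exactly. Since $\Var(u\sigma B_t)=\sigma^2tu^2\to 0$ and $u\,S^m_t$ is of order $u\,t^{1/\alpha_m}\to 0$ because $\alpha_m<2$, we have $u\tilde X_t\to 0$ in probability, uniformly on compacts in $\Theta$, so $\E_\theta f''(u\tilde X_t)\to f''(0)$ by dominated convergence; this is the first line of \eqref{eqn:expect-deriv-result-1}. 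For \eqref{eqn:expect-deriv-result-3}, $f''$ vanishes on $[-\eta,\eta]$, so $|\E_\theta f''(u\tilde X_t)|\le\|f''\|_\infty\,P_\theta(|u\tilde X_t|>\eta)$, and Condition \nameref{ass:U} forces $\eta/u\ge\sigma\sqrt{8t|\log t|}$; splitting $\tilde X_t$ into its Gaussian and stable parts, the Gaussian tail bound gives $P(|\sigma B_t|>\sigma\sqrt{2t|\log t|})=\mathcal O(t)$ and the polynomial stable tail gives $P(|S^m_t|>c\sqrt{t|\log t|})=\mathcal O\big(t^{1-\alpha_m/2}(\log(1/t))^{-\alpha_m/2}\big)$; multiplying by $\tfrac12 tu^2=\mathcal O(1/\log(1/t))$ produces $o(tu^{\alpha_1})$ after a short elementary computation.

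For $\vartheta=r_m^+$ (and symmetrically $r_m^-$), $\partial_{r_m^+}\mathcal L_\theta=\mathcal J^+_{\alpha_m}$, so $\partial_{r_m^+}\E_\theta f(u\tilde X_t)=t\,\E_\theta\big[(\mathcal J^+_{\alpha_m}[f(u\,\cdot)])(\tilde X_t)\big]$. The key ingredient is the rescaling identity obtained by substituting $w=uz$ in the integral defining $\mathcal J^+_{\alpha_m}[f(u\,\cdot)](x)$: this turns the truncation $\trunc(z)$ into $\trunc(w/u)$, which agrees with $\trunc(w)/u$ only on $\{w\le 1\}$, so that
\begin{align*}
	\mathcal J^+_{\alpha_m}[f(u\,\cdot)](x)=u^{\alpha_m}\mathcal J^+_{\alpha_m}f(ux)+u^{\alpha_m}f'(ux)\,\alpha_m\!\int_0^\infty\frac{\trunc(w)-u\,\trunc(w/u)}{w^{1+\alpha_m}}\,dw,
\end{align*}
and the last integral is $\mathcal O\big(u^{(1\vee\alpha_m)-\alpha_m}\log u\big)$, uniformly on compacts. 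Hence $\mathcal J^+_{\alpha_m}[f(u\,\cdot)](x)=u^{\alpha_m}\mathcal J^+_{\alpha_m}f(ux)+\mathcal O(u^{1\vee\alpha_m}\log u)f'(ux)$, and plugging this in together with $\E_\theta[\mathcal J^+_{\alpha_m}f(u\tilde X_t)]\to\mathcal J^+_{\alpha_m}f(0)$ (the function $\mathcal J^+_{\alpha_m}f$ being bounded and continuous under the stated assumptions) yields the second line of \eqref{eqn:expect-deriv-result-1}.

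For $\vartheta=\alpha_m$, $\partial_{\alpha_m}\mathcal L_\theta=r_m^+\partial_{\alpha_m}\mathcal J^+_{\alpha_m}+r_m^-\partial_{\alpha_m}\mathcal J^-_{\alpha_m}$. Differentiating the rescaled representation $\mathcal J^\pm_{\alpha}[f(u\,\cdot)](x)=u^{\alpha}\mathcal J^{\pm,u}_{\alpha}f(ux)$, with $\mathcal J^{\pm,u}_\alpha$ carrying the truncation $u\,\trunc(\cdot/u)$, in $\alpha$ produces a factor $\log u$ from $\partial_\alpha u^\alpha$: the leading part is $u^{\alpha_m}(\log u)\mathcal J^\pm_{\alpha_m}f(ux)$ and the remaining part $u^{\alpha_m}\partial_{\alpha_m}\mathcal J^{\pm,u}_{\alpha_m}f(ux)$ equals $u^{\alpha_m}\partial_{\alpha_m}\mathcal J^\pm_{\alpha_m}f(ux)$ up to truncation-mismatch errors of order $u^{1\vee\alpha_m}(\log u)^2$ times $f'(ux)$, the extra $\log u$ coming from the $\log w$ weight created by $\partial_\alpha w^{-1-\alpha}$ (estimated exactly as the plain integral above). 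Taking expectations, the $\log u$-order term of $\partial_{\alpha_m}\E_\theta f(u\tilde X_t)$ is precisely $tu^{\alpha_m}(\log u)\,\E_\theta[r_m^+\mathcal J^+_{\alpha_m}f(u\tilde X_t)+r_m^-\mathcal J^-_{\alpha_m}f(u\tilde X_t)]$, which is also exactly what $\log u\,(r_m^+\partial_{r_m^+}+r_m^-\partial_{r_m^-})\E_\theta f(u\tilde X_t)$ contributes at that order (both derivatives express this part through the same expectation), so in \eqref{eqn:expect-deriv-result-2} it cancels identically, leaving $tu^{\alpha_m}\E_\theta[r_m^+\partial_{\alpha_m}\mathcal J^+_{\alpha_m}f(u\tilde X_t)+r_m^-\partial_{\alpha_m}\mathcal J^-_{\alpha_m}f(u\tilde X_t)]$ plus an $\mathcal O(tu^{1\vee\alpha_m}(\log u)^2)\E_\theta f'$ remainder; sending $u\tilde X_t\to 0$ and using continuity of $\alpha\mapsto\mathcal J^\pm_\alpha f(0),\partial_\alpha\mathcal J^\pm_\alpha f(0)$ gives both the third line of \eqref{eqn:expect-deriv-result-1} and \eqref{eqn:expect-deriv-result-2}, with all $o(\cdot)$-terms uniform on compacts. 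The main obstacle is exactly this $\alpha_m$-case: making the perturbation formula with the singular multiplier $\partial_{\alpha_m}\mathcal J^\pm_{\alpha_m}$ rigorous (well-definedness and uniform boundedness of that operator on rescaled functions, with the correct logarithmic loss), and disentangling the two sources of $\alpha_m$-dependence — the exponent $1+\alpha_m$ in the kernel and the prefactor $\alpha_m$ — as they interact with the $u^{\alpha_m}$-rescaling, while tracking the powers of $\log u$ in every remainder uniformly on compacts of $\Theta$ where $\alpha_m$ may approach the value $1$ at which the truncation integral changes behaviour.
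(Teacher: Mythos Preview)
Your proposal is correct and follows essentially the same route as the paper. The paper derives the key identity $\partial_{\theta_j}\E_\theta f(u\tilde X_h)=h\,\E_\theta\big[(\partial_{\theta_j}\mathcal L_\theta)[f(u\,\cdot)](\tilde X_h)\big]$ by writing $\E_\theta f(u\tilde X_h)$ as a Fourier integral against $e^{-h\psi_\theta}$, differentiating under the integral, and then inverting the Fourier transform; you arrive at the same identity via the Duhamel formula and commutativity of Fourier multipliers, which is the physical-space version of exactly the same computation. From that point on the two proofs coincide: the paper's asymmetry term $\overline{\trunc}_u$ is precisely your truncation-mismatch integral $\int(\trunc(w)-u\trunc(w/u))w^{-1-\alpha_m}dw$ appearing in the rescaling of $\mathcal J^\pm_{\alpha_m}[f(u\,\cdot)]$, and the paper's bounds $|\partial_{r_m^\pm}\overline{\trunc}_u|=\mathcal O(u^{\alpha_m\vee 1}\log u)$ and $|\partial_{\alpha_m}\overline{\trunc}_u|=\mathcal O(u^{\alpha_m\vee 1}(\log u)^2)$ are your remainder estimates. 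Both proofs then conclude via $u\tilde X_h\to 0$ in probability and boundedness/continuity of $\mathcal J^\pm_{\alpha_m}f$ and $\partial_{\alpha_m}\mathcal J^\pm_{\alpha_m}f$. For \eqref{eqn:expect-deriv-result-3} the paper simply invokes its earlier Lemma (establishing $P(|u\tilde X_t|>\eta)=\mathcal O(tu^\alpha)$ under Condition~\nameref{ass:U}), which is the same tail-splitting argument you sketch directly.
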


\begin{cor}\label{cor:moment-derivative}
	Let $\f$ satisfy \nameref{ass:F1} and \nameref{ass:F2}, and let $\tilde{X}_{t}$ be a Lévy process with characteristic triplet $(0,\sigma,\tilde{\nu})$, parameterized by $\theta$ as in \eqref{eqn:parameter-vector}. Then, as $h=\frac{1}{n}\to 0$, $u_n\to\infty$, such that $u_n = o(\sqrt{h})$, 
	\begin{align}
	\begin{split}
		\tilde{\Lambda}_n^{-1}(\theta) \left[\D_\theta \E_\theta \f(u_n\tilde{X}_h)\right] \Gamma_n(\theta) \bar{\Lambda}_n^{-1}(\theta) &\to A(\theta).
	\end{split} \label{eqn:deriv-conv}
	\end{align}
	This convergence holds uniformly on compacts in $\theta\in\Theta$.
\end{cor}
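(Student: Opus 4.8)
The statement will follow by combining Lemma~\ref{lem:expect-derivative} with the explicit form of the scaling matrices, so the argument is essentially a matter of bookkeeping once the role of $\Gamma_n$ is understood. The plan is first to observe that right-multiplication by $\Gamma_n(\theta)$ realizes exactly the change of basis from \eqref{eqn:expect-deriv-result-2}. Indeed, since $\gamma_{n,m}$ differs from the identity only in the first column of its $m$-th block, with off-diagonal entries $-r_m^\pm\log u_n$, one has $\bigl[\D_\theta\E_\theta\f(u_n\tilde{X}_h)\,\Gamma_n\bigr]_{j,l}=\partial_{\theta_l}\E_\theta f_j(u_n\tilde{X}_h)$ whenever $\theta_l$ is $\sigma^2$ or an $r_m^\pm$, whereas for $\theta_l=\alpha_m$ it equals $\bigl(\partial_{\alpha_m}-\log u_n\,(r_m^+\partial_{r_m^+}+r_m^-\partial_{r_m^-})\bigr)\E_\theta f_j(u_n\tilde{X}_h)$, which is precisely the left-hand side of \eqref{eqn:expect-deriv-result-2}. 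Thus $\Gamma_n$ cancels the individually $\log u_n$-inflated derivative $\partial_{\alpha_m}\E_\theta f_j$ against $\log u_n\,(r_m^+\partial_{r_m^+}+r_m^-\partial_{r_m^-})\E_\theta f_j$, leaving a quantity of the benign order $hu_n^{\alpha_m}$. The left- and right-multiplications by $\tilde{\Lambda}_n^{-1}(\theta)$ and $\bar{\Lambda}_n^{-1}(\theta)$ are then pure row/column normalizations: row $1$ is divided by $hu_n^2$, rows $j\geq2$ by $\sqrt{hu_n^{\alpha_1}}$, the first column is untouched, and each of the three columns of block $m$ is divided by $\sqrt{h}\,u_n^{\alpha_m-\alpha_1/2}$, so that a row-$j$/block-$m$ entry with $j\geq2$ gets multiplied by the scalar $1/(hu_n^{\alpha_m})$.

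I would then verify the entries of the limit block by block. For $(1,1)$, \eqref{eqn:expect-deriv-result-1} gives $\partial_{\sigma^2}\E_\theta f_1(u_n\tilde{X}_h)=h\tfrac{u_n^2}{2}f_1''(0)+o(hu_n^2)$, so the normalized entry tends to $f_1''(0)/2=A(\theta)_{1,1}$. For $j\geq2$ and the first column, $f_j$ vanishes near $0$, so \eqref{eqn:expect-deriv-result-3} gives $\partial_{\sigma^2}\E_\theta f_j(u_n\tilde{X}_h)=o(hu_n^{\alpha_1})$ and the normalized entry is $o(1)=A(\theta)_{j,1}$. For $j\geq2$ and block $m$, \eqref{eqn:expect-deriv-result-1} and \eqref{eqn:expect-deriv-result-2} give leading terms $hu_n^{\alpha_m}\mathcal{J}_{\alpha_m}^\pm f_j(0)$ in the $r_m^\pm$-columns and $hu_n^{\alpha_m}\,\partial_{\alpha_m}\!\bigl[r_m^+\mathcal{J}_{\alpha_m}^+f_j(0)+r_m^-\mathcal{J}_{\alpha_m}^-f_j(0)\bigr]$ in the $\alpha_m$-column; multiplying by the scalar $1/(hu_n^{\alpha_m})$, the powers of $h$ and $u_n$ cancel exactly, producing precisely the corresponding entries of $A(\theta)$. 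It remains to see that the $(1,l)$ entries with $l\geq2$ vanish: there the derivative is of order at most $hu_n^{\alpha_m \vee 1}(\log u_n)^2$, and after the normalizations $1/(hu_n^2)$ and $1/(\sqrt{h}\,u_n^{\alpha_m-\alpha_1/2})$ it is of order $h^{-1/2}u_n^{\alpha_1/2-2}(\log u_n)^2$, which tends to $0$ because $\alpha_1<2$ and, under \nameref{ass:U}, $h^{-1/2}$ is of order $u_n\sqrt{\log u_n}$.

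To finish, the error terms must be shown negligible and uniformity must be addressed. The $o(\cdot)$ remainders become $o(1)$ after the same rescaling. The terms $\mathcal{O}\bigl(hu_n^{\alpha_m \vee 1}(\log u_n)^k\bigr)\,\E_\theta f_j'(u_n\tilde{X}_h)$ appearing in \eqref{eqn:expect-deriv-result-1}--\eqref{eqn:expect-deriv-result-2} are controlled by bounding $\E_\theta f_j'(u_n\tilde{X}_h)$, using that $f_j'$ also vanishes on $[-\eta,\eta]$ for $j\geq2$, by $\mathcal{O}(hu_n^{\alpha_1})$ via Lemma~\ref{lem:moments}(i) (and trivially by $\|f_1'\|_\infty$ for $j=1$); after rescaling these contributions are of order $hu_n^{\alpha_1+(\alpha_m \vee 1)-\alpha_m}(\log u_n)^k$, and since $\Theta$ enforces $\alpha_m>\alpha_1/2$ one has $\alpha_1+1-\alpha_m<2$, so $hu_n^{\alpha_1+(\alpha_m \vee 1)-\alpha_m}\to0$ at a polynomial rate in $h$ that swallows the logarithmic factors. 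Uniformity on a compact $K\subset\Theta$ is then inherited: all $o(\cdot)$ and $\mathcal{O}(\cdot)$ terms in Lemmas~\ref{lem:expect-derivative} and \ref{lem:moments} are uniform on compacts, the cancellation of the scaling powers is exact so the main terms equal $A(\theta)$ for every $n$, and on $K$ the exponents $\alpha_1,\dots,\alpha_M$ stay in a compact subset of $(0,2)$ with $\alpha_M>\alpha_1/2$, which makes the above rates uniform; hence $\sup_{\theta\in K}\bigl\|\tilde{\Lambda}_n^{-1}\D_\theta\E_\theta\f(u_n\tilde{X}_h)\,\Gamma_n\,\bar{\Lambda}_n^{-1}-A(\theta)\bigr\|\to0$. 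The only real effort, beyond invoking Lemma~\ref{lem:expect-derivative}, is the identification of $\Gamma_n$ with the combination in \eqref{eqn:expect-deriv-result-2} and the verification that the residual terms are dominated using the structural constraint $\alpha_m>\alpha_1/2$ together with the rate prescribed by \nameref{ass:U}; I do not expect a genuinely new analytic difficulty there.
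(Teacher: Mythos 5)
Your proposal is correct and follows essentially the same route as the paper's proof: you read off the row/column normalizations from $\tilde{\Lambda}_n^{-1}$, $\bar{\Lambda}_n^{-1}$, identify right-multiplication by $\Gamma_n$ with the derivative combination in \eqref{eqn:expect-deriv-result-2}, plug in Lemma~\ref{lem:expect-derivative} for the main terms, control the remainder via the $\mathcal{O}(hu^{\alpha})$ bound on $\E_\theta f_j'(u\tilde{X}_h)$ from Lemma~\ref{lem:moments}(i) together with $\alpha_m>\alpha_1/2$, and inherit uniformity from the sub-lemmas. Your entry-by-entry verification of the $(1,l)$ entries for $l\geq 2$ is in fact somewhat more careful than the paper's compressed one-line justification, but there is no substantive difference of approach.
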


These results allow us to establish the consistency of $\hat{\theta}_n$. We do not consider global uniqueness of the solution of the estimating equation \eqref{eqn:def-GMM}. Hence, we only obtain the existence of a consistent sequences of random variables satisfying the equation.

\begin{lemma}[Consistency]\label{lem:estimating-consistency} 
	Let $X_t$ be a Lévy process satisfying \eqref{eqn:def-locstable} with some $\rho<\alpha/2$, and parameter vector $\theta_0$. 
	Let $\f$ satisfy assumptions \nameref{ass:F1}, \nameref{ass:F2}, and \nameref{ass:I}, and let $u_n\to\infty$ be chosen according to \nameref{ass:U}.
	There exists a sequence of random vectors $\hat{\theta}_n$ solving \eqref{eqn:def-GMM}, such that $\hat{\theta}_n\to\theta$ in probability as $n\to\infty$. 
	This sequence is eventually unique, i.e.\ for any other consistent sequence $\hat{\theta}_n^*$ solving the estimating equation, it holds $P(\hat{\theta}_n \neq \hat{\theta}_n^*)\to 0$.
\end{lemma}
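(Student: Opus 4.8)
The plan is to apply the standard machinery for $Z$-estimators / estimating equations, in the form reviewed by \cite{jacod2017review}, using the preliminary results collected above as the two essential inputs. Write $F_n(\theta) = \frac{1}{n}\sum_{i=1}^n \f(u_n\Delta_{n,i}X) - \E_\theta\f(u_n\tilde{Z}_h)$ for the estimating function, and recall that Lemma \ref{lem:moment-clt} controls $F_n(\theta_0)$ (after rescaling by $\tilde\Lambda_n^{-1}$) while Corollary \ref{cor:moment-derivative} controls $\D_\theta\E_\theta\f(u_n\tilde X_h)$, hence effectively $\D_\theta F_n(\theta)$ up to the empirical part. Condition \nameref{ass:I} guarantees that the limiting matrix $A(\theta_0)$ is invertible, which is the non-degeneracy hypothesis of the general theory. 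The conclusion to be obtained is the existence of a solution $\hat\theta_n$ that is consistent and eventually unique.

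First I would establish the behaviour of $F_n$ on a neighbourhood of $\theta_0$. By Lemma \ref{lem:moment-clt}, $\tilde\Lambda_n^{-1}(\theta_0)\sqrt{n}\,F_n(\theta_0) = O_P(1)$, so in particular $F_n(\theta_0)\to 0$ in probability (each entry of $\tilde\Lambda_n$ tends to $0$ under \nameref{ass:U}). Next, the map $\theta\mapsto \E_\theta\f(u_n\tilde Z_h)$ is smooth, and Corollary \ref{cor:moment-derivative} gives, uniformly on compact neighbourhoods of $\theta_0$, that $\tilde\Lambda_n^{-1}(\theta)\,[\D_\theta\E_\theta\f(u_n\tilde Z_h)]\,\Gamma_n(\theta)\bar\Lambda_n^{-1}(\theta) \to A(\theta)$, with $A$ continuous and $A(\theta_0)$ regular. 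Since the empirical average $\frac1n\sum_i\f(u_n\Delta_{n,i}X)$ does not depend on $\theta$, we have $\D_\theta F_n(\theta) = -\D_\theta\E_\theta\f(u_n\tilde Z_h)$, so the same normalised convergence holds for $\D_\theta F_n$. Choosing a small enough compact ball $\bar B_\delta(\theta_0)$, the normalised Jacobian stays uniformly close to $A(\theta)$ and hence uniformly invertible with uniformly bounded inverse for $n$ large. A first-order Taylor expansion of $F_n$ around $\theta_0$, combined with this uniform invertibility and the fact that $F_n(\theta_0)\to 0$, yields via a standard fixed-point / Brouwer-degree argument (as in the general consistency statement of \cite{jacod2017review}) the existence, with probability tending to one, of a solution $\hat\theta_n\in B_\delta(\theta_0)$ of $F_n(\hat\theta_n)=0$, together with $\hat\theta_n - \theta_0 \to 0$ in probability. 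Since $\delta$ is arbitrary, this gives consistency.

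For eventual uniqueness, suppose $\hat\theta_n^*$ is another consistent sequence solving the estimating equation. On the event that both $\hat\theta_n,\hat\theta_n^*\in B_\delta(\theta_0)$ — which has probability tending to one — the mean value theorem applied coordinatewise gives $0 = F_n(\hat\theta_n) - F_n(\hat\theta_n^*) = G_n\,(\hat\theta_n - \hat\theta_n^*)$ for a matrix $G_n$ whose rows are Jacobian rows of $F_n$ evaluated at intermediate points in $B_\delta(\theta_0)$. By the uniform convergence of the normalised Jacobian to the invertible continuous matrix $A(\cdot)$, the appropriately normalised version of $G_n$ is invertible for $n$ large (shrinking $\delta$ if needed so that $A(\theta)$ stays bounded away from singularity on $\bar B_\delta(\theta_0)$), whence $\hat\theta_n = \hat\theta_n^*$ on that event, i.e. $P(\hat\theta_n\neq\hat\theta_n^*)\to 0$.

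I expect the main obstacle to be the bookkeeping around the rate matrices: the normalisation $\tilde\Lambda_n^{-1}(\theta)\,[\D_\theta F_n(\theta)]\,\Gamma_n(\theta)\bar\Lambda_n^{-1}(\theta)$ depends on $\theta$ through both $\tilde\Lambda_n,\Gamma_n,\bar\Lambda_n$ (which involve the unknown $\alpha_m$ and $r_m^\pm$) and through $A(\theta)$, so one must check that these normalising factors, evaluated at nearby $\theta$ rather than at $\theta_0$, remain comparable — i.e. that $\tilde\Lambda_n^{-1}(\theta)\tilde\Lambda_n(\theta_0)$, $\Gamma_n^{-1}(\theta)\Gamma_n(\theta_0)$ and $\bar\Lambda_n(\theta)\bar\Lambda_n^{-1}(\theta_0)$ stay bounded uniformly over $B_\delta(\theta_0)$ and $n$. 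This is where the strict ordering $\alpha_1>\dots>\alpha_M>\alpha/2$ and the continuity in $\theta$ are used, and where the $\log u_n$ entries of $\Gamma_n$ must be handled carefully; once this comparability is in hand, the uniform-invertibility argument and the degree-theoretic existence step are routine applications of the framework in \cite{jacod2017review}.
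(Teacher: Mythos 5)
Your overall plan — control $F_n(\theta_0)$ via Lemma~\ref{lem:moment-clt}, control the Jacobian via Corollary~\ref{cor:moment-derivative}, and invoke a quantitative inverse-function argument from \cite{jacod2017review} — is exactly the paper's strategy. But the concrete step you flag as the ``main obstacle'' and then assert can be handled on a \emph{fixed} ball $\bar B_\delta(\theta_0)$ is precisely where the argument breaks down. The ratios $\tilde\Lambda_n^{-1}(\theta_0)\tilde\Lambda_n(\theta)$, $\bar\Lambda_n^{-1}(\theta_0)\bar\Lambda_n(\theta)$, and $\Gamma_n^{-1}(\theta_0)\Gamma_n(\theta)$ are \emph{not} bounded uniformly over a fixed $\delta$-ball and over $n$. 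For instance, $\tilde\Lambda_n(\theta)=\diag(hu^2,\sqrt{hu^{\alpha_1}},\ldots)$, so $\tilde\Lambda_n^{-1}(\theta_0)\tilde\Lambda_n(\theta)$ contains the entry $u_n^{(\alpha_1-\alpha_{0,1})/2}$, which diverges whenever $\alpha_1-\alpha_{0,1}$ is a fixed nonzero quantity of order $\delta$; similarly $\Gamma_n^{-1}(\theta_0)\Gamma_n(\theta)$ has entries of size $|r_m^\pm - r_{0,m}^\pm|\log u_n$. The strict ordering of the $\alpha_m$ and continuity of $A(\cdot)$ do not rescue this: the blow-up is caused by the $n$-dependent exponents and logarithms, not by a lack of smoothness in $\theta$.

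The paper circumvents this by never working on a fixed ball. It reparameterizes $T=\bar\Lambda_n(\theta_0)\Gamma_n^{-1}(\theta_0)(\theta-\theta_0)$ and restricts to a \emph{shrinking} ball $\bar B_{d_n}(0)$ with $d_n = c\sqrt{h_n}\,u_n^{\alpha_M-\alpha_1/2}/(\log u_n)^3$, which in the original coordinates corresponds to $\|\theta-\theta_0\|\lesssim 1/(\log u_n)^2$. On that shrinking neighbourhood one does have $u_n^{|\alpha_1-\alpha_{0,1}|}\le \exp(C/\log u_n)\to 1$ and $|r_m^\pm-r_{0,m}^\pm|\log u_n\to 0$, so the three normalisation ratios converge to the identity uniformly, and the normalised Jacobian converges uniformly to $A(\theta_0)$. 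Lemma~6.2 of \cite{jacod2017review} is then applied with radius $r=d_n$ (it requires $\|\bar F_n(0)\|\le \lambda d_n$, which holds because $\bar F_n(0)=\mathcal O_P(1/\sqrt n)=o_P(d_n)$ since $\alpha_M>\alpha_1/2$), yielding both existence and local uniqueness in $\bar B_{d_n}(0)$ on an event of probability tending to one. Your appeal to a fixed-$\delta$ Brouwer-degree argument also runs into a second problem you don't address: over a fixed ball, $F_n(\theta)\to 0$ pointwise for \emph{every} $\theta$ (all moments vanish as $u_n\Delta X\to 0$), so there is no nondegenerate deterministic limit identifying $\theta_0$; identification happens entirely through the rate structure, which is what the shrinking ball encodes. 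Finally, your uniqueness paragraph inherits the same defect: the mean-value-theorem matrix $G_n$ is evaluated at intermediate points in a fixed ball, where the rescaled version need not be close to $A(\theta_0)$. The paper instead bootstraps: it proves eventual uniqueness inside $\bar B_{d_n}(0)$ via the quantitative lemma, then shows (using the CLT machinery of Theorem~\ref{thm:estimating-clt}) that any other consistent solution must eventually lie in that shrinking ball, hence must coincide with $\hat\theta_n$.
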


To obtain a central limit theorem for $\hat{\theta}_n$, we may apply a Taylor expansion to obtain the representation 
\begin{align*}
	\hat{\theta}_n -\theta_0 \approx -\left[\widetilde{\D_\theta\f}\right]^{-1}  \frac{1}{n}\left[\sum_{i=1}^n \f(u_n\Delta_{n,i}X) - \E_{\theta_0}\f(u_n\tilde{Z}_h)  \right],
\end{align*}
where $\widetilde{\D \f}_{j,k} = \partial_{\theta_k} \E_{\tilde{\theta}^j} f_j(u_n \tilde{Z}_h)$ for some $\tilde{\theta}^j$ on the line segment between $\theta_0$ and $\hat{\theta}_n$, for $j=1,\ldots, 3M+1$. 
This standard approach allows to establish Theorem \ref{thm:estimating-clt}, as detailed in Subsection \ref{sec:proofs-levy}.

\subsection{Proofs}\label{sec:proofs-levy}

\begin{proof}[Proof of Lemma \ref{lem:moments}] 
	At the price of changing the term $\mu$, we may assume w.l.o.g.\ that $\trunc(z) = z\mathds{1}_{|z|\leq 1}$. In view of the Lévy-Itô decomposition \eqref{eqn:levy-ito}, we write 
	\begin{align*}
		u \tilde{X}_t &= u \mu t + u \sigma B_t + \int uz \left(N(dz,ds) - \mathds{1}_{|z|\leq \frac{1}{u}}\tilde{\nu}(dz)\otimes ds \right)\\
		&\quad + t \int uz (\mathds{1}_{|z|\leq \frac{1}{u}} - \mathds{1}_{|z|\leq 1} ) \tilde{\nu}(dz) \\
		&= u \mu t + u \sigma B_t + J_t^u + u t \mu_u
	\end{align*}
	where $N$ is a Poisson counting measure with intensity $\tilde{\nu}(dz)\otimes ds$, and $J_t^u$ denotes the corresponding integral term. 
	The explicit form of $\tilde{\nu}$ allows for computation of $\mu_u$, as
	\begin{align*}
		|\mu_u| 
		& \leq \int_{\frac{1}{u}}^{1} \sum_{m=1}^M (r_m^++r_m^-)|z|^{-\alpha_m}\, dz\\
		& \leq \sum_{m=1}^M (r_m^++r_m^-) (u^{\alpha_1-1}+1)(\alpha_1^{-1}+\log(u))) \quad 
		\leq u^{(\alpha_1-1)\vee 0}(1+\log(u))\sum_{m=1}^M (r_m^++r_m^-).
	\end{align*}
	The term $\log(u)$ is added to cover the case $\alpha_1=1$.
	This bound on $\mu_u$ will be used in the sequel.
	
	To derive the claims of the Lemma, we start with a rough bound for the probability
	\begin{align}
		\begin{split}
		P(|u\tilde{X}_t|>\eta) &\leq P\left( |u t (\mu+\mu_u) t| >\frac{1-\lambda}{2}\eta \right)\\
		&\quad + P\left(|\sigma uB_t|>\frac{1-\lambda}{2}\eta\right) +  P \left( |J_t^u| > \lambda \eta \right), \quad\lambda\in(0,1). 
		\end{split} \label{eqn:lem-moments-prob}
	\end{align}	
	The first term tends to zero identically as $t\to 0$. 
	To study the jump term, choose a bounded, smooth function $ g(x)\geq  \mathds{1}_{|x|\geq \lambda \eta }$ such that $g(0)=g'(0)=0$. 
	Then by Itô's formula, and a substitution in the integral, we obtain \begin{align*}
		P\left( |J_t^u| > \lambda \eta \right) &\leq \E g(J_t^u) \\
		&= \int_0^t \int \E\left[g(J_s^u+uz) - g(J_s^u) - g'(J_s^u) uz \mathds{1}_{|z| \leq \frac{1}{u}} \right]\, \tilde{\nu}(dz)\,  ds \\
		&\leq \sum_{m=1}^M (r_m^++r_m^-) \alpha_m u^{\alpha_m} \int_0^t \int \frac{\E\left[g(J_s^u+z) - g(J_s^u) - g'(J_s^u) z \mathds{1}_{|z| \leq 1} \right]}{|z|^{1+\alpha_m}}\, \tilde{\nu}(dz)\,  ds \\
		&\leq \tilde{C} u^\alpha (\|g\|_\infty + \|g''\|_\infty),
	\end{align*}
	for a constant $\tilde{C}$ depending on $\boldsymbol{\alpha}, \boldsymbol{r}$ and is bounded on compacts in these parameters.
	The function $g$ can be chosen such that the latter term is finite. 
	Thus, $P(|u\tilde{X}_t| > \lambda\eta) = \mathcal{O}(u^\alpha t)$, uniformly on compacts in $\boldsymbol{\alpha}, \boldsymbol{r}$. 
	
	For the Gaussian term in \eqref{eqn:lem-moments-prob}, we employ the tail bound  \begin{align*}
		P\left( |B_1| > \frac{(1-\lambda)\eta}{2\sigma u \sqrt{t}} \right) \;\leq\; \frac{2\sigma u \sqrt{t}}{(1-\lambda)\eta \sqrt{2\pi}}\exp\left(\frac{-\eta^2(1-\lambda)^2}{8 \sigma^2 u^2 t} \right).
	\end{align*} 
	Now let $a>0$ be such that $u = \frac{(1-\lambda)\eta}{\sqrt{a}\sigma \sqrt{8t|\log t|}}$. 
	Then 
	\begin{align*}
		P\left( |B_1| > \frac{(1-\lambda)\eta}{2\sigma u \sqrt{t}} \right) \;\leq\; \frac{\exp(a \log t )}{2\sqrt{ a \pi |\log t|}}  = \frac{t^a}{2\sqrt{ a \pi |\log t|}}.
	\end{align*}
	If $a\geq 1$, i.e.\ $u\leq \frac{(1-\lambda)\eta}{\sigma\sqrt{8t|\log t|}}$, the latter bound is of order less than $\mathcal{O}(u^\alpha t)$, uniformly on compacts. In particular, \begin{align*}
		P(|u\tilde{X}_t| > \eta) \leq \tilde{C} t u^\alpha.
	\end{align*}
	Note that the latter inequality does not hold if $u=\tau/\sqrt{-t\log t}$ for a proportionality factor $\tau$ which is too large. 
	
	If $u$ is larger, but $u=o(1/\sqrt{t})$,the bound on $P(|J_t^u|>\lambda\eta)$ remains unchanged, while we still obtain $P(|u\sigma B_t|>\eta)\to 0$ uniformly on compacts.
	Thus, if we only suppose $u=o(1/\sqrt{t})$, we have $P(|u\tilde{X}_t|>\tilde{\eta}) \to 0$ uniformly on compacts, for any $\tilde{\eta}>0$, but with a slower rate.
	
	To obtain an asymptotically exact value, we plug the former rough bound into It{\^o}'s formula. 
	In case (i), we have 
	\begin{align}
		\E f(u\tilde{X}_t) &= \E\int_0^t\Big[ \frac{u^2\sigma^2}{2} f''(u\tilde{X}_s) + (\mu+\mu_u) uf'(u\tilde{X}_s) \nonumber \\
		&\qquad + \int  (f(u\tilde{X}_s + uz)-f(u\tilde{X}_s) - uz\mathds{1}_{|uz|\leq 1} f'(u\tilde{X}_s))\,\tilde{\nu}(dz) \Big] ds \nonumber \\
		\begin{split}
		&= \int_0^t \frac{u^2\sigma^2}{2} \E f''(u\tilde{X}_s) + (\mu+\mu_u) u\E f'(u\tilde{X}_s) \,ds \\
		&\quad + \sum_{m=1}^M u^{\alpha_m} \int_0^t\left[ r_m^+\E\mathcal{J}_{\alpha_m}^+ f(u\tilde{X}_s) + r_m^- \E\mathcal{J}_{\alpha_m}^- f(u\tilde{X}_s) \right]\, ds
		\end{split} \label{eqn:JA-approx-Ito} \\
		&= u^2t \, \mathcal{O}(u^\alpha t )  + \sum_{m=1}^M u^{\alpha_m} \int_0^t\left[ r_m^+\E\mathcal{J}_{\alpha_m}^+ f(u\tilde{X}_s) + r_m^- \E\mathcal{J}_{\alpha_m}^- f(u\tilde{X}_s) \right]\, ds. \nonumber
	\end{align}
	Here, we used $\E f''(u\tilde{X}_s) \leq \|f''\|_\infty P(|u\tilde{X}_s| > \eta) = \mathcal{O}(u^\alpha t)$ as $f$ vanishes on $[-\eta,\eta]$. 
	We moreover used that $\E f'(u\tilde{X}_s) = \mathcal{O}(u^\alpha t)$, and $\mu_u u = \mathcal{O}(u^2t)$ as established previously. 
	These upper bounds hold uniformly on compacts in $\Theta$.
	To proceed, note that $\mathcal{J}_\alpha^\pm f$ is a bounded continuous function, since 
	\begin{align*}
		|\mathcal{J}_{\alpha}^\pm f(x)| \leq 2\|f\|_\infty \int_{|z|\geq 1} \frac{\alpha}{|z|^{1+\alpha}}dz + \|f''\|_\infty \int_{|z|\leq 2} \frac{\alpha|z|^2}{|z|^{1+\alpha}} dz,
	\end{align*} 
	which is furthermore bounded uniformly on compacts in $\alpha$. 
	By virtue of this boundedness, $u\tilde{X}_s\pconv 0$ implies $\E\mathcal{J}_{\alpha_m}^\pm f(u\tilde{X}_s) = \mathcal{J}_{\alpha_m}^\pm f(0)+o(1)$. 
	To ensure that this last approximation holds uniformly on compacts in $\Theta$, note that 
		$\|(\mathcal{J}_{\alpha_m}^\pm f)'\|_\infty = \|\mathcal{J}_{\alpha_m}^\pm f'\|_\infty$ 
	is also bounded, such that it suffices to control $\E( |u\tilde{X}_s|\wedge 1)$ uniformly. 
	But we already established that for any $\eta$, $P(|u\tilde{X}_s|>\eta)\to 0$ uniformly on compacts in $\Theta$. 
	Hence,
	\begin{align*}
		\E f(u\tilde{X}_t)
		&= u^2t \mathcal{O}(u^\alpha t ) + \sum_{m=1}^M u^{\alpha_m} \int_0^t\left[ r_m^+\E\mathcal{J}_{\alpha_m}^+ f(u\tilde{X}_s) + r_m^- \E\mathcal{J}_{\alpha_m}^- f(u\tilde{X}_s) \right]\, ds \\
		&= o(u^\alpha t) + \sum_{m=1}^M u^{\alpha_m} (r_m^+ \mathcal{J}_{\alpha_m}^+ f(0) + r_m^- \mathcal{J}_{\alpha_m}^- f(0))\\
		&= o(u^\alpha t) + u^\alpha t \left[r_1^+ \mathcal{J}_\alpha^+ f(0) + r_1^- \mathcal{J}_\alpha^- f(0)\right],
	\end{align*}
	uniformly on compacts in $\sigma^2,\boldsymbol{\alpha}, \boldsymbol{r}$.
	This proves the first claim. 
	
	If, on the other hand, $f(0)=0, f''(0)\neq 0$, a different term dominates in \eqref{eqn:JA-approx-Ito}. 
	We obtain 
	\begin{align*}
		\E f(u\tilde{X}_t)&=\int_0^t \frac{u^2\sigma^2}{2} \E f''(u\tilde{X}_s)\, ds + \mathcal{O}(u^\alpha t)\\
		&=\mathcal{O}(tu^\alpha) + \frac{u^2t}{2}\left(f''(0)+o(1)\right),
	\end{align*}
	uniformly on compacts in $\Theta$.
	
	For the case $f''(0)=0, f^{(4)}(0)\neq 0$, we may apply the result of case (ii) to obtain 
		$\E f''(u\tilde{X}_t) = \frac{u^2t\sigma^2}{2}f^{(4)}(0) + o(u^2t)$, 
	and hence \begin{align*}
		\E f(u\tilde{X}_t)
		&=\int_0^t \frac{u^2\sigma^2}{2} \E f''(u\tilde{X}_s) ds+ \mathcal{O}(u^\alpha t) \\
		&= \int_0^t \frac{u^4\sigma^4}{4}s f^{(4)}(0) ds+ \mathcal{O}(u^\alpha t) + o(u^4t^2) \\
		&=  \frac{u^4t^2\sigma^4}{8} f^{(4)}(0) ds+ \mathcal{O}(u^\alpha t) + o(u^4t^2).
	\end{align*}
	
	For the last claim, we use Itô's formula again. 
	Recall that the truncation function satisfies $\trunc(z)=z$ for $|z|\leq 1$, and $|\trunc(z)|\leq 2$.
	Then
	\begin{align*}
		\E f(u\tilde{X}_t) 
		&= \E\int_0^t  \int \left[ f(u(\tilde{X}_s+z)) - f(u\tilde{X}_s) - uf'(u\tilde{X}_s) \trunc(z) \right] \, \tilde{\nu}(dz) \\
		&\leq 2\,t \|f\|_\infty \tilde{\nu}\left(\left(-\frac{1}{u},\frac{1}{u}\right)^c\right) + 2\,tu\|f'\|_\infty \tilde{\nu}((-1,1)^c) \\
		&\quad + t u\|f'\|_\infty \int_{(-1,1)\setminus (-\frac{1}{u}, \frac{1}{u})} |z|\, \tilde{\nu}(dz) + t u^2 \|f''\|_\infty \int_{-\frac{1}{u}}^{\frac{1}{u}} z^2\, \tilde{\nu}(dz) \\
		&\leq t \tilde{C}\left( \|f\|_\infty u^\alpha + u \|f'\|_\infty + u \|f'\|_\infty (u^{\alpha-1}+1) + u^2 \|f''\|_\infty u^{\alpha-2} \right) (1+\log(u))\\
		&\leq t \tilde{C} u^{\alpha\vee 1} (1+\log(u)) \left( \|f\|_\infty + \|f'\|_\infty + \|f''\|_\infty \right)
	\end{align*}
	The additional factor $\log(u)$ is introduced to cover the special case $\alpha=1$ when computing the integral $\int_{1/u}^1 |z|^{-\alpha} dz$.
\end{proof}

\begin{proof}[Proof of Lemma \ref{lem:bias-bound}] 
Choose some $0<\epsilon<\frac{1}{u}$. The process $X_t$ may be decomposed by virtue of the Lévy-Itô decomposition as \begin{align*}
	X_t &= \mu t + \sigma B_t + \int_0^t\int (z-\trunc(z)) \, N(dz, ds) + \int_0^t\int \trunc(z) \, (N-\nu)(dz,ds)\\
	 &= \mu t + \sigma B_t + J^1_t + J^2_t +J^3_t - t \zeta_\epsilon, \\
	J^1_t &= \int_0^t \int_{[-\epsilon,\epsilon]} \trunc(z) (N-\nu)(dz,dt), \\
	J^2_t &= \sum_{s\leq t} \Delta X_s \mathds{1}_{\epsilon <|\Delta X_s| \leq \frac{1}{u}}, \\
	J^3_t &= \sum_{s\leq t} \Delta X_s \mathds{1}_{\frac{1}{u}<|\Delta X_s|} ,\\
	\zeta_\epsilon &= \int_{|z|>\epsilon} \trunc(z)\, \nu(dz),
\end{align*}
where $(N-\nu)$ is a compensated homogeneous Poisson point process with intensity measure $\nu(dz)$, such that $J_t^1$ is a martingale. 
For $\tilde{X}_t$, we have the analogous decomposition $\tilde{X}_t= \mu t + \sigma B_t + \tilde{J}^1_t + \tilde{J}^2_t + \tilde{J}^3_t + t \tilde{\zeta}_\epsilon$. 
Moreover, 
\begin{align*}
	\zeta_\epsilon - \tilde{\zeta}_\epsilon &= \int_{|z|>\epsilon} \trunc(z)\,(\nu-\tilde{\nu})(dz)\\
	&= \int_{\epsilon<|z|<1} z\, (\nu-\tilde{\nu})(dz) + \int_{|z|>1 } \trunc(z)\, (\nu-\tilde{\nu})(dz) .
\end{align*}
The second integral is finite. 
Furthermore, integrating by parts, 
\begin{align*}
		\int_{\epsilon}^{1} z (\nu-\tilde{\nu})(dz)
	&=\int_{\epsilon}^{1} \left[\nu((z,1]) - \tilde{\nu}((z,1]) \right]dz + \epsilon \left[\nu((\epsilon,1]) - \tilde{\nu}((\epsilon,1]) \right],
\end{align*}
which has a limit as $\epsilon\to 0$ if $\rho<1$. 
Thus, there exists a real number $\bar\zeta$ such that $\zeta_\epsilon-\tilde{\zeta}_\epsilon \to \bar{\zeta}$ as $\epsilon\to 0$.

By subadditivity of the total variation distance and the Wasserstein distance, 
\begin{align}
	\nonumber&\quad \left| \E f(uX_t) - \E f(u\tilde{X}_t - ut \bar\zeta) \right| \\
	\nonumber &\leq \left| \E f\left(uX_t\right) - \E f\left(u ((\mu-\tilde{\zeta}_\epsilon - \bar\zeta) t+\sigma B_t + \tilde{J}^1_t + \tilde{J}^2_t + J_t^3) \right) \right| \\
	\nonumber &\quad + \left| \E f\left(u ((\mu-\tilde{\zeta}_\epsilon - \bar\zeta) t+\sigma B_t + \tilde{J}^1_t + \tilde{J}^2_t + J_t^3) \right) - \E f\left(u (\tilde{X}_t - t \bar\zeta) \right) \right| \\
	\begin{split}
	\label{eqn:expect-diff} &\leq u\|f'\|_{\infty} \left( t|\bar{\zeta} - (\zeta_\epsilon - \tilde{\zeta}_\epsilon)| + d_W(J^1_t, \tilde{J}^1_t)  + d_W(J^2_t, \tilde{J}^2_t) \right) \\
	 &\quad +  \left| \E f\left(u ((\mu-\tilde{\zeta}_\epsilon - \bar\zeta) t+\sigma B_t + \tilde{J}^1_t + \tilde{J}^2_t + J_t^3) \right) - \E f\left(u (\tilde{X}_t - t \bar\zeta) \right) \right|.
	\end{split}
\end{align}
We treat all terms in \eqref{eqn:expect-diff} individually.

\underline{Part (i)} 
The small jumps can be handled by noting \begin{align}
	\label{eqn:bias-smalljumps}d_W(J^1_t, \tilde{J}^1_t) & \leq \E|J^1_t| + \E|\tilde{J}^1_t| \leq \sqrt{\E|J^1_t|^2} + \sqrt{\E|\tilde{J}^1_t|^2}.
\end{align}
Since $J^1_t$ and $\tilde{J}^1_t$ have bounded jumps, we have $\E|J^1_t|^2,\E|\tilde{J}^1_t|^2 \to 0$ as $\epsilon \to 0$. 
Furthermore, $|\bar{\zeta} - (\zeta_\epsilon - \tilde{\zeta}_\epsilon)| \to 0$ as $\epsilon \to 0$. 

\underline{Part (ii)} 
As a next step, we study the medium sized jumps $J^2_t$. 
Consider the slightly more general process 
\begin{align*}
	J_t^{(a,b]} = \sum_{s\leq t} \Delta X_s \mathds{1}_{a < |\Delta X_s| \leq b},
\end{align*}
for $0<a<b<1$. Let $\tilde{J}_t^{(a,b]}$ be defined analogously based on $\tilde{X}_t$. 
These are compound Poisson processes, which can be written as 
\begin{align*}
	J_t^{(a,b]} = \sum_{i=1}^{N_t} U_i,\qquad \tilde{J}_t^{(a,b]} = \sum_{i=1}^{\tilde{N}_t} \tilde{U}_i,
\end{align*}
where $N_t$ is a Poisson counting process with intensity $\eta((a,b]) = \nu([-b,-a) \cup (a, b])$, and the $U_i$ are iid random variables with distribution $\frac{\nu(dz) \mathds{1}(a<|z|\leq b)}{\eta((a,b])}$. 
Vice versa, the same holds for $\tilde{N}_t$ and $\tilde{U}_i$ with $\tilde{\eta}((a,b]) = \tilde{\nu}([-b,-a)\cup (a,b])$. 
Then Theorem 10 and Proposition 3 of \cite{mariucci2017wasserstein} for $p=1$, yield
\begin{align}
	\nonumber d_W(J^{(a,b]}_t, \tilde{J}_t^{(a,b]}) &= d_W\left(\sum_{i=1}^{N_t} U_i, \sum_{i=1}^{\tilde{N}_t} \tilde{U}_i\right)\\
	 \label{eqn:wasserstein-1a}& \leq t{\eta}((a,b]) d_W(U_1, \tilde{U}_1) + t \left|\eta((a,b]) - \tilde{\eta}((a,b])\right|  \E|\tilde{U}_1|.
\end{align}
We compute 
\begin{align}
	\E|\tilde{U}_1| &= \frac{1}{\tilde{\eta}((a,b])} \left[ a \tilde{\nu}((a,b]) +\int_a^b \tilde{\nu}((z,b])dz + a \tilde{\nu}([-b,-a)) + \int_a^b \tilde{\nu}([-b, -z))dz \right] \nonumber \\
	&= a + \int_a^b \frac{\tilde{\eta}((z,b])}{\tilde{\eta}((a,b])}\, dz. \nonumber
\end{align}
Recall that $\tilde{\eta}((z,b]) = \sum_{m=1}^M (r_m^++r_m^-) (|z|^{-\alpha_m}-b^{-\alpha_m})$.
Then there exists a constant $\tilde{C}$ which is bounded on compacts in $\Theta$ and $L$, such that for $z<b/2$, and $\alpha=\alpha_1$, 
\begin{align}
	\frac{1}{\tilde{C}} |z|^{-\alpha}\leq\tilde{\eta}((z,b]) \leq \tilde{C} |z|^{-\alpha}. \label{eqn:expect-U}
\end{align}
In particular, this yields $\E|\tilde{U}_1| \leq \tilde{C} a^{1\wedge \alpha}$ for a potentially different constant $\tilde{C}$.
here and in the following, the constant $\tilde{C}$ may vary from line to line, and is bounded on compacts in $\theta$, $L$, and $\rho$. 

Furthermore, since $\nu$ and $\tilde{\nu}$ are sufficiently similar, 
\begin{align*}
	\eta((a,b]) &= \nu((a,\infty)) + \nu((-\infty, -a)) - \nu([-b,b]^c) \\
	&= \tilde{\eta}((a,b]) + \xi,
\end{align*}
for $|\xi| \leq 2L (a^{-\rho} +b^{-\rho}) \leq 4L a^{-\rho}$.
Thus, the second term in \eqref{eqn:wasserstein-1a} is of order $\mathcal{O}(t a^{(1\wedge\alpha)-\rho})$. 
Moreover, $|\eta((a,b])| \leq \tilde{C}(a^{-\alpha} + a^{-\rho}) =\mathcal{O}(a^{-\alpha})$ for small $a$, since $\rho<\alpha$.

We now consider the distance $d_W(U_1, \tilde{U}_1)$ occurring in \eqref{eqn:wasserstein-1a}, which can be expressed in terms of their cumulative distribution functions as 
\begin{align}
	\nonumber &\quad d_W(U_1, \tilde{U}_1) \\
	\nonumber &= \int_{-b}^{b} \left| P(U_1 \leq v) - P(\tilde{U}_1\leq v) \right| dv\\
	 \begin{split}
	\label{eqn:Wasserstein-U}	
	 &= \int_{-b}^{-a} \left| P(U_1 \leq v) - P(\tilde{U}_1\leq v) \right| du + \int_{a}^b \left| P(U_1> v) -P(\tilde{U}_1> v)  \right|dv  \\
	 &\qquad + 2a \left| P(U_1 \leq -a) - P(\tilde{U}_1\leq -a) \right|.
	\end{split}
\end{align}
For $-b\leq v< -a$, and $b\leq 1$, it holds 
\begin{align}
	\nonumber\left| P(U_1 \leq v) - P(\tilde{U}_1\leq v) \right| &= \left| \frac{\nu([-b,v])}{\eta((a,b])}-\frac{\tilde\nu([-b,v])}{\tilde\eta((a,b])} \right|\\
	&\leq \left| \frac{1}{\eta((a,b])} -  \frac{1}{\tilde{\eta}((a,b])} \right| \tilde{\nu}([-b,v]) \\
	\nonumber&\qquad  + \frac{1}{\eta((a,b])} \left| \nu([-b,v])-\tilde{\nu}([-b,v]) \right| \\
	&\leq \tilde{C} |v|^{-\alpha} \frac{\left| \eta((a,b])-\tilde{\eta}((a,b]) \right|}{ \left[ \eta((a,b]) \wedge \tilde{\eta}((a,b]) \right]^2}  +\frac{|\nu([-b,v])-\tilde{\nu}([-b,v])|}{ \eta((a,b]) \wedge \tilde{\eta}((a,b])}. \label{eqn:U-cdf-1}
\end{align}
Recall that $|\eta((a,b]) - \tilde{\eta}((a,b])| = \mathcal{O}(a^{-\rho})$. 
Furthermore, the assumed similarity of $\nu$ and $\tilde{\nu}$ implies that $|\nu([-b,v])-\tilde{\nu}([-b,v])| \leq L(|v|^{-\rho}+b^{-\rho})\leq 2L |v|^{-\rho}$, and \begin{align}
	\eta((a,b])\wedge \tilde{\eta}((a,b]) & \geq \tilde{\eta}((a,b]) - 2La^{-\rho} = \Omega(a^{-\alpha}) \label{eqn:eta-lower}
\end{align} 
as $a\to 0$, whenever $b\geq 2a$. In this case, for $-b\leq v\leq -a$, \begin{align}
	\label{eqn:U-cdf}\left| P(U_1 \leq v) - P(\tilde{U}_1\leq v) \right| & \leq \tilde{C}|v|^{-\alpha} a^{2\alpha-\rho} + \tilde{C} |v|^{-\rho} a^\alpha
	\leq \tilde{C} |v|^{-\rho} a^{\alpha}.
\end{align}
The analogous bound holds for $|P(U_1> v)-P(\tilde{U}_1> v)|$, when $a\leq v \leq b$. 
Now plug \eqref{eqn:U-cdf} into expression \eqref{eqn:Wasserstein-U} for the Wasserstein distance, to obtain for $a\to 0$ and $a\leq \frac{b}{2}$, 
\begin{align*}
	d_W(U_1, \tilde{U}_1) & \leq \tilde{C} \left(a^{\alpha}b^{1-\rho } +a^{\alpha+1-\rho}\right),
\end{align*}
where we used $\rho<1$.
Using \eqref{eqn:wasserstein-1a}, we may hence bound, \begin{align}
	\begin{split}
	\label{eqn:Jab-W1}d_W(J_t^{(a,b]}, \tilde{J}_t^{(a,b]}) & \leq \tilde{C} t \left(a^{-\alpha} (a^{\alpha}b^{1-\rho} + a^{\alpha+1-\rho}) + a^{(1\wedge\alpha)-\rho} \right) \\
	&\leq \tilde{C} t \left(b^{1-\rho}+a^{(1\wedge\alpha)-\rho}\right),
	\end{split}
\end{align}
This upper bound will be exploited in the rest of the proof. 
In particular, for $J_t^2 = J_t^{(\epsilon,1/u]}$ and $\epsilon$ small enough, 
\begin{align}
	\label{eqn:bias-medjumps}d_W(J_t^2, \tilde{J}_t^2) \leq \tilde{C} t u^{\rho-1}.
\end{align}

\underline{Part (iii)}
It remains to study the term in \eqref{eqn:expect-diff} due to the large jumps. 
Here, our approach is slightly different as we will not (only) bound a metric distance between $J_t^3$ and $\tilde{J}_t^3$. 
Define 
\begin{align*}
	f_{u,t}(x)=\E f(u(x + t(\mu-\tilde{\zeta}_\epsilon - \bar{\zeta}) + \sigma B_t + \tilde{J}_t^1 + \tilde{J}_t^2)),
\end{align*}
and we consider $\left|\E f_{u,t}(J_t^3)-\E f_{u,t}(\tilde{J}_t^3)\right|$, as suggested by \eqref{eqn:expect-diff}. 
Since $J_t^3$ is a L{\'e}vy process, Itô's formula yields\begin{align}
	\label{eqn:generator-J3}
	\E f_{u,t}(J_t^3) &= f_{u,t}(0)+\int_0^t \E \mathcal{J}^3 f_{u,t}(J_s^3)ds, \\
	\nonumber\mathcal{J}^{3}g(x)&= \int_{[-\frac{1}{u}, \frac{1}{u}]^c} \left[g(x+z)-g(x)\right]\,\nu(dz),
\end{align}
i.e., $\mathcal{J}^3$ is the infinitesimal generator of $J_t^3$. 
Analogously, we denote by $\tilde{\mathcal{J}}^3$ the generator of $\tilde{J}_t^3$. 
Then integration by parts yields, for any $x\in\R$, 
\begin{align*}
	&\quad \left|\int_{(1/u, \infty)} \left[f_{u,t}(x+z)-f_{u,t}(x)\right]\,(\nu-\tilde{\nu})(dz) \right| \\
	&= \Bigg| \left[f_{u,t}\left(x+\frac{1}{u}\right)-f_{u,t}(x)\right] \,\left[ \nu((1/u,\infty)) - \tilde\nu((1/u,\infty))  \right] \\
	&\quad + \int_{\frac{1}{u}}^\infty \left[ \nu((z,\infty)) - \tilde\nu((z,\infty))  \right] f'_{u,t}(x+z)\, dz\Bigg|  \\
	&\leq 2\|f\|_\infty L u^{\rho} + \int_{\frac{1}{u}}^1 L z^{-\rho} |f'_{u,t}(x+z)|\, dz + \left[\nu((1,\infty)) + \tilde{\nu}((1,\infty)) \right]\int_1^\infty |f'_{u,t}(x+z)|\, dz \\
	&\leq \tilde{C}\|f\|_\infty  u^{\alpha-\delta} + \tilde{C}u^{\rho} \int_{\frac{1}{u}}^1 |f'_{u,t}(x+z)|\, dz + \tilde{C} \int_1^\infty |f'_{u,t}(x+z)|\, dz .
\end{align*}
The same bound holds for the range of integration $z\in(-\infty, -1/u)$, such that 
\begin{align*}
		\left| \mathcal{J}^3 f_{u,t}(x) - \tilde{\mathcal{J}}^3f_{u,t}(x) \right| 
	\leq \tilde{C} u^\rho \left(\|f\|_\infty + \int_{-\infty}^\infty |f'_{u,t}(z)|\, dz \right).
\end{align*}
Now note that, 
\begin{align*}
	f'_{u,t}(x) &= u \, \E f'\left(u(x + t(\mu+\zeta_0) + \sigma B_t + \tilde{J}_t^1 + \tilde{J}_t^2)\right),
\end{align*}
such that by Fubini's theorem, 
\begin{align*}
	\int_{-\infty}^\infty |f'_{u,t}(z)|\, dz & \leq  \E \int_{-\infty}^\infty u\left|f'\left(u(z + t(\mu+\zeta_0) + \sigma B_t + \tilde{J}_t^1 + \tilde{J}_t^2)\right) \right|\, dz \\
	&= \E\int_{-\infty}^\infty |f'(v)|dv \qquad = \|f'\|_{L_1(\R)},
\end{align*}
where we performed a linear substitution in the second step. 
Hence, 
\begin{align}
	\left| \mathcal{J}^3 f_{u,t}(x) - \tilde{\mathcal{J}}^3f_{u,t}(x) \right| &\leq \tilde{C} u^{\rho} (\|f\|_\infty + \|f'\|_{L_1} ). \label{eqn:generator-J3-similarity}
\end{align}
Using this in \eqref{eqn:generator-J3}, 
\begin{align}
	\nonumber \E f_{u,t}(J_t^3) &= \int_0^t \E \tilde{\mathcal{J}}^3 f_{u,t} (J_s^3)\, ds + \mathcal{O}(u^{\rho}t) \\
	\nonumber &=\int_0^t \E\tilde{\mathcal{J}}^3 f_{u,t} (\tilde{J}_s^3) + \mathcal{O}\left(|\E\tilde{\mathcal{J}}^3f_{u,t}(J_s^3)- \E\tilde{\mathcal{J}}^3f_{u,t}(\tilde{J}_s^3)|\right)\, ds + \mathcal{O}(u^{\rho}t) \\
	\begin{split}
	\label{eqn:Ef-J3}
	&=\E f_{u,t}(\tilde{J}^3_t) + \mathcal{O}\left(u^{\rho} t(\|f\|_\infty + \|f'\|_\infty + \|f'\|_{L_1})\right) \\
	&\quad + \mathcal{O}\left( \|\tilde{\mathcal{J}}^3 f_{u,t}\|_\infty \int_0^t d_{TV}\left(J_s^{(1,\infty)},\tilde{J}_s^{(1,\infty)}\right)ds\right) \\
	&\quad + \mathcal{O}\left( \|(\tilde{\mathcal{J}}^3 f_{u,t})'\|_\infty \int_0^t d_{W}\left(J_s^{(\frac{1}{u},1]},\tilde{J}_s^{(\frac{1}{u},1]}\right)ds \right).
	\end{split}
\end{align}
We now study the latter two terms.

\underline{Part (iv)} 
The total variation distance can be bounded by noting that $J_t^{(1,\infty)}$ and $\tilde{J}_t^{(1,\infty)}$ admit only finitely many jumps. 
The number of their jumps is Poisson distributed, such that 
\begin{align*}
	d_{TV}(J_t^{(1,\infty)},0)= 1-P(J_t^{(1,\infty)}=0) 
	&= 1-\exp\left[-t\nu(( -1, 1 )^c)\right] \\
	&\leq t \, \nu( (-1, 1 )^c).
\end{align*}
In particular, 
\begin{align}
	d_{TV}(J_t^{(1,\infty)}, \tilde{J}_t^{(1,\infty)}) 
	&\leq t \left[ \nu( (-1, 1 )^c) + \tilde{\nu}( (-1, 1 )^c) \right] \leq t \tilde{C}. \label{eqn:J3-TV}
\end{align}
Moreover, 
\begin{align}
	\left| \tilde{ \mathcal{J}}^3 f_{u,t}(x) \right| 
	&\leq \int_{[-\frac{1}{u},\frac{1}{u}]^c} |f_{u,t}(x+z)-f_{u,t}(x)| \tilde{\nu}(dz) \nonumber \\
	&\leq \|f_{u,t}\|_\infty \tilde{\nu}([-1/u, 1/u]^c)\nonumber \\
	&\leq \tilde{C} u^\alpha \|f\|_\infty. \label{eqn:J3-upper}
\end{align}
Via the same argument, we also obtain 
\begin{align}
	\left| \frac{d}{dx}\tilde{ \mathcal{J}}^3 f_{u,t}(x) \right| 
	&= \left|\int_{[-\frac{1}{u},\frac{1}{u}]^c} f'_{u,t}(x+z)-f'_{u,t}(x) \tilde{\nu}(dz) \right| \leq \tilde{C} u^{\alpha+1} \|f'\|_\infty. \label{eqn:J3-deriv-upper}
\end{align}
From \eqref{eqn:Jab-W1}, we know that 
\begin{align*}
	\int_0^t d_W\left(J_s^{(\frac{1}{u},1]}, \tilde{J}_s^{(\frac{1}{u},1]}\right)\, ds 
	\leq \tilde{C} \int_0^t s\, ds \leq \tilde{C} t^2.
\end{align*}
In combination with \eqref{eqn:Ef-J3}, we thus obtain 
\begin{align}
	\nonumber \left|\E f_{u,t}(J_t^3) - \E f_{u,t}(\tilde{J}_t^3)\right| 
	& \leq t u^{\rho}  \tilde{C} (\|f\|_\infty +\|f'\|_\infty +\|f'\|_{L_1}) \\
	\nonumber &\quad + \tilde{C} t^2u^{\alpha}\|f\|_\infty + \tilde{C} t^2u^{\alpha+1}\|f'\|_\infty \\
	\label{eqn:bias-largejumps}&\leq \tilde{C} (\|f\|_\infty +\|f'\|_\infty +\|f'\|_{L_1})(tu^{\rho}+t^2u^{\alpha+1}).
\end{align}

\underline{Part (v)}
Now putting \eqref{eqn:bias-smalljumps}, \eqref{eqn:bias-medjumps}, and \eqref{eqn:bias-largejumps} into \eqref{eqn:expect-diff}, and letting $\epsilon\to 0$, 
\begin{align}
	\label{eqn:bias-asymmetric} \left| \E f(uX_t) - \E f(u\tilde{X}_t + ut\zeta_0) \right| 
	&\leq \tilde{C} (\|f\|_\infty +\|f'\|_\infty +\|f'\|_{L_1})(tu^{\rho}+t^2u^{\alpha+1}).
\end{align}
It can be checked that the upper bounds which are summarized in the constant $\tilde{C}$ all satisfy the desired uniformity on compacts in $\boldsymbol{\alpha}$, $\boldsymbol{r}$, $L$, and $\rho-\alpha<0$. 
This concerns the lines \eqref{eqn:expect-U}, \eqref{eqn:U-cdf-1}, \eqref{eqn:eta-lower}, \eqref{eqn:generator-J3-similarity}, \eqref{eqn:J3-TV}, \eqref{eqn:J3-upper}, \eqref{eqn:J3-deriv-upper}.
\end{proof}

\begin{proof}[Proof of Corollary \ref{cor:bias-complete}]
	Assume $f(0)=0$ without loss of generality.
	A Taylor expansion yields, for any $a\in\R$,
	\begin{align*}
		|\E f(u(\tilde{X}_t+ta)) - \E f(u\tilde{X}_t)| &\leq |u t a \E f'(u\tilde{X}_t)| + \|f''\|_\infty t^2u^2a^2.
	\end{align*}
	We denote $\tilde{X}_t = \sigma B_t + \tilde{J}_t$, where $\tilde{J}_t$ is the purely discontinuous component of $\tilde{X}$. Introduce for any function $g$ the notation $g_{[u]}(x) = \E g(u\sigma B_t + x)$. 
	Then for any $k$-th derivative, $\|g_{[u]}^{(k)}\|_\infty \leq \|g^{(k)}\|_\infty$. 
	In particular, by Lemma \ref{lem:moments}, \begin{align*}
		|\E f'(u\tilde{X}_t) | = |\E f'_{[u]}(u\tilde{J}_t)| \leq t u^\alpha (1+\log(u)) \left( \|f'\|_\infty +\|f''\|_\infty+\|f'''\|_\infty\right) \tilde{C},
	\end{align*}
	such that 
	\begin{align}
		\label{eqn:bias-drift} 
		\begin{split}
		&\quad |\E f(u(\tilde{X}_t+ta)) - \E f(u\tilde{X}_t)| \\ &\leq t^2 u^{2\vee (\alpha+1)} (1+\log(u))\left( \|f'\|_\infty +\|f''\|_\infty+\|f'''\|_\infty\right) (|a|+|a|^2) \tilde{C}.
		\end{split}
	\end{align}
	
	Moreover, $|\E f(uX_t)-\E f(u(\tilde{X}_t+t\mu-t\bar{\zeta}))| \leq \tilde{C}(tu^{\rho} + t^2u^{\alpha+1})$ from Lemma \ref{lem:bias-bound}. 
	Applying \eqref{eqn:bias-drift} for the drift $a=\mu+\zeta_0$, this yields \eqref{eqn:bias-withdrift}.
\end{proof}

\begin{proof}[Proof of Lemma \ref{lem:moment-clt}]
	All summands $\f(\Delta_{n,i} X)$ are iid and bounded and $\tilde{\Lambda}_n^{-1} / \sqrt{n}\to 0$, such that the Lindeberg-Feller condition for triangular arrays of independent r.v.s is satisfied \cite[Thm.\ 2.4.5]{durrett2010probability}.
	Moreover, the bias is of order 
		$|\E \f({u}_n\Delta X_{t_i}) - \E\f_j({u}_n\tilde{Z}_h)| = \mathcal{O}(h_nu_n^{\rho})$
	by Corollary \ref{cor:bias-complete}. 
	If $\rho<\alpha/2$, this is small enough to ensure 
		$\Lambda_n^{-1} \sqrt{n} |\E \f(\Delta_{n,i} X) - \E_\theta \f(u_n \tilde{Z}_h)| = o(1)$.
	Hence, the bias is asymptotically negligible.
	
	It thus suffices to check the asymptotic covariance structure. 
	Denote $f_{j,k}(x) = f_j(x)f_k(x)$. 
	Then $f_{j,k}$ is smooth and vanishes on $[-\eta,\eta]$ unless $j=1=k$. 
	Moreover, $f_{1,1}(0)=f_{1,1}'(0)=f_{1,1}''(0)=0$ and $f_{1,1}^{(4)}(0)=6f_1''(0)^2$. 
	Corollary \ref{cor:bias-complete} and Lemma \ref{lem:moments} yield 
	\begin{align*}
			\E f_{j,k}(u_n\Delta_{n,i} X) 
		&= \E_\theta f_{j,k}(u_n\tilde{Z}_h) + \mathcal{O}(h_nu_n^{\rho}) \\
		&= u_n^\alpha h \left(r_1^+\mathcal{J}_{\alpha}^+ f_{j,k}(0)+ r_1^-\mathcal{J}_{\alpha}^- f_{j,k}(0)\right) + o(u_n^\alpha h), \quad (j,k)\neq (1,1),\\
			\E f_{1,1}(u_n \Delta_{n,i} X) 
		&= \frac{3}{4} u_n^4 h^2 \sigma^4 f_1''(0)^2 + o(u_n^4h^2) + \mathcal{O}(u_n^\alpha h) + \mathcal{O}(u_n^\rho h) \\
		&= \frac{3}{4} \sigma^4 u_n^4 h^2 f_1''(0)^2 + o(u_n^4 h^2).
	\end{align*}
	To compute the asymptotic covariance, we further determine 
	\begin{align*}
		\left(\E f_1(u_n \Delta_{n,i} X) \right)^2 
		&= \left( hu^2 \frac{\sigma^2}{2} f''(0) + o(hu^2) + \mathcal{O}(hu^\rho) \right)^2 \\
		&= \frac{h^2u^4 \sigma^4}{4} f''(0)^2 + o(h^2u^4),
	\end{align*}
	and for $j\geq 2,k\geq 1$, \begin{align*}
		\E f_j(u_n \Delta_{n,i} X)\, \E f_k(u_n \Delta_{n,i} X) 
		&= \mathcal{O}(u_n^\alpha h) \cdot  \mathcal{O}(u_n^2 h) = o(u_n^\alpha h).
	\end{align*}
	These approximations can be summarized as 
	\begin{align*}
		\Cov(\f(\Delta_{n,i} X))_{j,k} 
		&= \begin{cases}
			\frac{\sigma^4}{2}u_n^4 h^2 f''(0)^2 + o(u_n^4 h^2),& j=k=1\\
			u_n^\alpha h \left(r_1^+\mathcal{J}_{\alpha}^+ f_{j,k}(0)+ r_1^-\mathcal{J}_{\alpha}^- f_{j,k}(0)\right) + o(u_n^\alpha h), & \text{otherwise},
		\end{cases}
	\end{align*}
	This scaling behavior yields $\Cov_\theta( \tilde{\Lambda}_n^{-1}(\theta)\f(\Delta_{n,i} X) ) \to \Sigma(\theta)$ as $n\to 0$, and thus the desired central limit theorem.
\end{proof}

\begin{proof}[Proof of Lemma \ref{lem:expect-derivative}]
	First, assume $f$ to be a Schwartz function with Fourier transform $\hat{f}(\lambda)$. Then \begin{align*}
		\E  f(u\tilde{X}_h) = \frac{1}{2\pi}\int \hat{f}(\lambda/u) e^{-h\psi_\theta(\lambda)}\, d\lambda, 
	\end{align*}
	where $\psi_\theta$ is the Lévy symbol of $\tilde{X}_h$, i.e.\ $\E_\theta\exp(i\lambda \tilde{X}_h) = \exp(-h\psi_\theta(\lambda))$. In particular, for any entry $\theta_j$ of the parameter vector $\theta$, \begin{align*}
			\partial_{\theta_j} \E_\theta f(u\tilde{X}_h) 
		&= -h \int \hat{f}(\lambda) \left( \partial_{\theta_j} \psi_\theta(u\lambda) \right) e^{-h\psi_\theta(u\lambda)}  \, d\lambda.
	\end{align*}
	Integration and differentiation may be exchanged because $f$ is a Schwartz function and $\psi$ has polynomial growth.
	In particular, via the Lévy-Khintchine formula, the Lévy symbol may be determined as
	\begin{align*}
		\psi_\theta(u\lambda) 
		&= \frac{u^2\sigma^2\lambda^2}{2} + \int \left[e^{i u\lambda z} - 1 - iu\lambda \trunc(z)\right]\, \tilde{\nu}(dz)\\
		&= \frac{u^2\sigma^2\lambda^2}{2} - i \lambda \int_{|z|\geq \frac{1}{u}} \left[u\trunc(z) - \trunc(uz) \right]\tilde{\nu}(dz)  \\
		&\quad + \sum_{m=1}^M \alpha_m u^{\alpha_m} \lambda^{\alpha_m} \int \frac{e^{i z} - 1 - i\trunc(z)}{|z|^{1+\alpha_m}} \left( r_m^+ \mathds{1}_{z>0} + r_m^-\mathds{1}_{z<0} \right)\, dz.
	\end{align*}
	The second term appears because the Lévy measure $\tilde{\nu}$ is allowed to be asymmetric. 
	In its expression, we used that $\trunc(z)=z$ for $z\in(-1,1)$, and denote 
	\begin{align}
		\overline{\trunc}_u 
		&= \int_{|z|\geq \frac{1}{u}} \left[u\trunc(z) - \trunc(uz) \right]\tilde{\nu}(dz) \nonumber\\
		&= u \int_{|z|\geq 1} \trunc(z)\, \tilde{\nu}(dz) + u \int_{\frac{1}{u}\leq |z| < 1} z \, \tilde{\nu}(dz) - \int_{|z|\geq \frac{1}{u}} \trunc(uz) \,\tilde{\nu}(dz). \label{eqn:expect-deriv-1}
	\end{align}
	Hence, by inverting the Fourier transform,
	\begin{align}
		\partial_{\theta_j} \E_\theta f(u\tilde{X}_h) 
		= h\,\E_\theta\left[ \partial_{\theta_j} \left( \frac{\sigma^2 u^2}{2} f'' + \sum_{m=1}^M u^{\alpha_m} (r_m^+ \mathcal{J}_{\alpha_m}^+ f + r_m^- \mathcal{J}_{\alpha_m}^-) - \overline{\trunc}_u f'\right)(u\tilde{X}_h) \right]. \label{eqn:expect-deriv-2}
	\end{align}
	
	So far, we assumed $f$ to be a Schwartz function, but the right hand side of \eqref{eqn:expect-deriv-2} makes sense whenever $f\in\mathcal{C}^2$.
	We can extend the whole equation \eqref{eqn:expect-deriv-2} to this case by approximating $f$ suitably with a sequence of Schwartz functions $f_n$, such that $\sup_{|x|\leq K} |f^{(k)}_n(x)-f^{(k)}(x)|\to 0$ as $n\to\infty$ for each $K>0$, and $k=0,1,2$, and $\sup_n \|f_n^{(k)}\|_\infty < \infty$. 
	Hence, standard arguments allow us to pass to the limit on both sides of the equation \eqref{eqn:expect-deriv-2}
	
	To handle the asymmetry term $\bar{\trunc}_u$, we exploit \eqref{eqn:expect-deriv-1} to derive
	\begin{align*}
		\left|\partial_{r_m^\pm} \bar{\trunc}_u\right| 
		& \leq  u \|\trunc\|_\infty \int_1^\infty \alpha_m |z|^{-1-\alpha_m}dz + u \int_{\frac{1}{u}}^1 \alpha_m |z|^{-\alpha_m} \, dz + \|\trunc\|_\infty \int_{\frac{1}{u}}^{\infty} \alpha_m|z|^{-1-\alpha_m} dz \\
		&\leq u\|\trunc\|_\infty + u \left|\int_{\frac{1}{u}}^1 \alpha_m |z|^{-\alpha_m} \, dz\right| + \|\trunc\|_\infty u^{\alpha_m}. 
	\end{align*}
	The second integral can be bounded as follows.
	For any $\epsilon\in(0,1)$ and any $p\neq 1$, there is a $\tilde{p}$ between $p$ and $1$ such that
	\begin{align*}
		\left|\int_{\epsilon}^{1} |z|^{-p}\, dz \right|
		& = \frac{1}{|1-p|} |\epsilon^{1-p}-\epsilon^0| \\
		& =  \frac{|1-p|}{|1-p|} |\epsilon^{1-\tilde{p}} \log(\epsilon)|  
		\quad \leq |\log\epsilon|\epsilon^{(1-p)\wedge 0}.
	\end{align*}
	By continuity, the same bound holds for $p=1$.
	Thus, we obtain 
	\begin{align*}
		\left|\partial_{r_m^\pm} \bar{\trunc}_u\right| 
		&\leq u\|\trunc\|_\infty + \alpha_m |\log u| u^{\alpha_m\vee 1} + \|\trunc\|_\infty u^{\alpha_m} \\
		&\leq \tilde{C}u^{\alpha_m\vee 1} (1+|\log u|).
	\end{align*}
	Similarly,
	\begin{align*}
		\left|\partial_{ \alpha_m} \bar{\trunc}_u\right| 
		& \leq u\|\trunc\|_\infty (r_m^++r_m^-)\int_1^\infty \frac{\alpha_m |\log z| + 1}{|z|^{1+\alpha_m}}\, dz + u (r_m^++r_m^-) \int_{\frac{1}{u}}^{1} \frac{\alpha_m|\log z| + 1}{|z|^{\alpha_m}}\, dz  \\
		&\quad + \|\trunc\|_\infty (r_m^++r_m^-) \int_{\frac{1}{u}}^\infty \frac{\alpha_m |\log z| + 1}{|z|^{1+\alpha_m}}\, dz\\
		&\leq \tilde{C} (1+|\log u|)^2 u^{\alpha_m\vee 1}.
	\end{align*}
	Note also that $\partial_{\sigma^2} \overline{\trunc}_u=0$.
	
	For specific partial derivatives, we thus have shown that
	\begin{align}
		\begin{split}
			\partial_{\sigma^2} \E_\theta f(u\tilde{X}_h) 
		&= h \frac{u^2}{2} \E_\theta f''(u\tilde{X}_h), \\
			\partial_{r_m^\pm} \E_\theta f(u\tilde{X}_h) 
		&= h u^{\alpha_m} \E_\theta \mathcal{J}_{\alpha_m}^\pm f(u\tilde{X}_h) + \mathcal{O}\left(hu^{\alpha_m\vee 1} \log u\right) \E_\theta f'(u\tilde{X}_h),\\
			\partial_{\alpha_m} \E_\theta f(u\tilde{X}_h) 
		&= h u^{\alpha_m} \E_\theta \left( \frac{d}{d\alpha_m}(r_m^+\mathcal{J}_{\alpha_m}^+f +r_m^-\mathcal{J}_{\alpha_m}^-f)(u\tilde{X}_h) \right) \\
		&\quad + hu^{\alpha_m} \log u \E_\theta \left( (r_m^+\mathcal{J}_{\alpha_m}^+f +r_m^-\mathcal{J}_{\alpha_m}^-f)(u\tilde{X}_h) \right) \\
		&\quad + \mathcal{O}\left(hu^{\alpha_m\vee 1} (\log u)^2\right) \E_\theta f'(u\tilde{X}_h).
		\end{split} \label{eqn:expect-deriv-3}
	\end{align}
	For fixed $f$, the functions $f''$, $\mathcal{J}_{\alpha_m}^\pm f$ and $\partial_{\alpha_m} \mathcal{J}_{\alpha_m}^\pm f$ are bounded, uniformly on compacts in $\theta$. 
	Moreover, $P_\theta(|u\tilde{X}_h|>\eta)\to 0$ uniformly on compacts in $\Theta$ for any $\eta$, as established in the proof of Lemma \ref{lem:moments}. 
	Therefore, $\E_\theta f''(u\tilde{X}_h) \to f''(0)$ uniformly on compacts as $h\to 0$, as well as 
		$\E_\theta \mathcal{J}_{\alpha_m}^\pm f(u\tilde{X}_h) \to \mathcal{J}_{\alpha_m}^\pm f(0)$ 
	and 
		$\E_\theta \partial_{\alpha_m}\mathcal{J}_{\alpha_m}^\pm f(u\tilde{X}_h) \to \partial_{\alpha_m}\mathcal{J}_{\alpha_m}^\pm f(0)$.
	This completes the proof of \eqref{eqn:expect-deriv-result-1}, and \eqref{eqn:expect-deriv-result-2} follows analogously by applying a linear transformation to \eqref{eqn:expect-deriv-3}.
	Finally, \eqref{eqn:expect-deriv-result-3} is a consequence of \eqref{eqn:expect-deriv-3} upon noting that $\E_\theta f''(u\tilde{X}_h)=\mathcal{O}(hu^\alpha)$, see Lemma \ref{lem:moments}.
\end{proof}

\begin{proof}[Proof of Corollary \ref{cor:moment-derivative}]
	Since $f_1'$ is bounded, \eqref{eqn:expect-deriv-result-1} shows that \begin{align*}
		|\partial_{r_m^\pm} \E_\theta f_1(u\tilde{X}_h)| = o(hu^2),\quad |\partial_{\alpha_m} \E_\theta f_1(u\tilde{X}_h)| = o(hu^2).
	\end{align*}
	This corresponds to the entries $A(\theta)_{1,k}=0$ for $k\geq 2$.
	For $j\geq 2$, we have $\E_\theta f'_j(u\tilde{X}_h) = \mathcal{O}(hu^\alpha)$ by virtue of Lemma \ref{lem:moments}, since $f_j$ vanishes near zero. Hence, since $\alpha_m>\alpha/2$ and $u\leq \mathcal{O}(\sqrt{h})$, \begin{align*}
		\mathcal{O}\left(hu^{\alpha_m\vee 1} (\log u)^2\right) \E_\theta f'_j(u\tilde{X}_h) = \mathcal{O}(h^2u^{\alpha + (\alpha_m\vee 1)} (\log(u)^2)) \leq o(hu^{\alpha_m}).
	\end{align*}
	This corresponds to the entries $A(\theta)_{j,1}=0$ for $j\geq 2$.
	In combination with Lemma \ref{lem:expect-derivative}, this suffices to establish the convergence \eqref{eqn:deriv-conv}.
\end{proof}

\begin{proof}[Proof of Lemma \ref{lem:estimating-consistency}]
	Denote the estimating equation \eqref{eqn:def-GMM} as $F_n(\hat{\theta}_n)=0$, for 
	\begin{align}
		F_n(\theta) = \frac{1}{n} \sum_{i=1}^n \f(u_n \Delta_{n,i}X) - \E_{\hat{\theta}_n} \f(u_n \tilde{Z}_{h_n}). \label{eqn:F-sum}
	\end{align}
	Let $\theta_0$ be the true parameters, and reparameterize $\theta = \theta_0 + \Gamma_n(\theta_0) \bar{\Lambda}^{-1}_n(\theta_0)T$ for $T=\bar{\Lambda}_n(\theta_0)\Gamma_n^{-1}(\theta_0)(\theta-\theta_0)$, and let
	\begin{align*}
		\bar{F}_n(T)=\tilde{\Lambda}_n^{-1}(\theta_0)F_n\left(\theta_0 + \Gamma_n(\theta_0)\bar{\Lambda}^{-1}_n(\theta_0)T\right).
	\end{align*} 
	This is well defined whenever $T \in B_{d_n}(0)$, for $d_n= {c} \sqrt{h_n}u_n^{\alpha_M-\frac{\alpha_1}{2}}/(\log u_n)^3\to 0$, and $c>0$ sufficiently small.
	In this reparameterized model, we need to show that there exists a sequence of random vectors $\hat{T}_n\in B_{d_n}$ such that $\bar{F}_n(\hat{T}_n)=0$ for large $n$, and $\Gamma_n(\theta_0) \bar{\Lambda}_n^{-1}(\theta_0) \hat{T}_n\to 0$. 
	This will imply that $\|\hat{\theta}_n- \theta_0\| \leq C/(\log u_n)^2$ for a sufficiently large factor $C$.
	
	We know from Lemma \ref{lem:moment-clt} that \begin{align*}
		\bar{F}_n(0)=\tilde{\Lambda}_n^{-1}(\theta_0) F_n(\theta_0)
		=\mathcal{O}_P\left(\frac{1}{\sqrt{n}}\right) = o(d_n).
	\end{align*}
	Furthermore, 
	\begin{align*}
		\D_T \bar{F}_n(T)=\begin{pmatrix}
			\partial_{T_1} & \ldots & \partial_{T_{3M+1}} 
		\end{pmatrix} \bar{F}_n(T) = \tilde{\Lambda}_n^{-1}(\theta_0) \D_\theta F_n(\theta_0 + {\Gamma}_n \bar{\Lambda}_n^{-1} T) \Gamma_n(\theta_0)\bar{\Lambda}_n^{-1}(\theta_0).
	\end{align*}
	By Corollary \ref{cor:moment-derivative}, $\tilde{\Lambda}_n^{-1}(\theta) \D_\theta F_n(\theta) \Gamma_n(\theta) \bar{\Lambda}_n^{-1}(\theta) \to A(\theta)$ locally uniformly, and it can be checked that $\theta\mapsto A(\theta)$ is continuous. 
	Moreover, the definitions of $\tilde{\Lambda}_n,\bar{\Lambda}_n$, and $\Gamma_n$ readily yield, as $n\to\infty$,
	\begin{align}
		\begin{split}
		& \sup_{T\in \overline{B}_{d_n}(0)} \|\tilde\Lambda_n^{-1}(\theta_0)\tilde\Lambda_n(\theta_0 + \Gamma_n(\theta_0)\bar{\Lambda}_n^{-1}(\theta_0)T) - \mathbf{I}_{3M+1}\| \\
		&\leq \sup_{\|\theta-\theta_0\|\leq \frac{C}{(\log u_n)^2}} \|\tilde\Lambda_n^{-1}(\theta_0)\tilde\Lambda_n(\theta) - \mathbf{I}_{3M+1}\|\to 0, \\
		&\sup_{T\in \overline{B}_{d_n}(0)} \|\bar\Lambda_n^{-1}(\theta_0)\bar\Lambda_n(\theta_0 + \Gamma_n(\theta_0)\bar{\Lambda}_n^{-1}(\theta_0)T) - \mathbf{I}_{3M+1}\| \to 0, \\
		&\sup_{T\in \overline{B}_{d_n}(0)} \| \Gamma_n^{-1}(\theta_0 + \Gamma_n(\theta_0)\bar{\Lambda}_n^{-1}(\theta_0)T)\Gamma_n(\theta_0) - \mathbf{I}_{3M+1}\| \to 0.
		\end{split} \label{eqn:lambda-uniform-1}
	\end{align}
	Here, we denote by $\|\cdot\|$ the spectral norm of a matrix, i.e.\ $\|A\|^2$ is the largest absolute eigenvalue of the symmetrized matrix $A^TA$, and $\mathbf{I}_d$ denotes the $d\times d$ identity matrix. 
	Thus, 
	\begin{align*}
		\sup_{T\in \overline{B}_{d_n}(0)} \| \D_T \bar{F}_n(T) - A(\theta_0)\| &\to 0.
	\end{align*}
	
	Now we apply \cite[Lemma 6.2]{jacod2017review} to establish the existence of a solution $\hat{T}_n\in B_{d_n^*}(0)$ of the equation $\bar{F}_n(\hat{T}_n)=0$. 
	Let $\lambda = \frac{1}{2} \|A(\theta_0)^{-1}\|^{-1}$, and denote by $C_n$ the event 
	\begin{align*}
		C_n &= \left\{ \sup_{T\in \overline{B}_{d_n}(0)} \| \D_T \bar{F}_n(T) - A(\theta_0)\| \leq \lambda \right\} \quad \cap \quad \left\{ \left\|\bar{F}_n(0) \right\| \leq \lambda d_n \right\}.
	\end{align*}
	Since the first set is deterministic, and since $\|\bar{F}_n(0)\|/d_n\pconv 0$, we have $P(C_n)\to 1$. 
	On the set $C_n$, it holds that $0\in \overline{B}_{\lambda d_n}(\bar{F}_n(0))$. 
	Then Lemma 6.2 of \cite{jacod2017review} with $y=0, f=\bar{F}_n$ and $r=d_n$, states that there exists a unique point $\hat{T}_n\in \overline{B}_{d_n}(0)$ which solves $\bar{F}_n(\hat{T}_n)=0$. 
	
	Returning to the original parametrization, we conclude there exists a random variable $\hat{\theta}_n$ such that with probability at least $1-P(C_n) \to 0$, $\hat{\theta}_n$ solves the estimating equation and $\hat{\theta}_n - \theta_0 \in \Gamma_n(\theta_0)\bar{\Lambda}_n^{-1}(\theta_0) \overline{B}_{d_n}(0)$, i.e.\ $\hat{\theta}_n-\theta_0 = \mathcal{O}_P(1/\log u_n)$. 
	Theorem \ref{thm:estimating-clt} below establishes that any consistent sequence $\hat{\theta}_n^*$ converges at a rate faster than $1/\log u_n$, 
	such that 
		$\hat{T}_n^* = \Gamma_n(\theta_0)^{-1}\bar{\Lambda}_n^{-1}(\theta_0)(\hat{\theta}_n^* - \theta_0)\in \overline{B}_{d_n}(0)$
	eventually. 
	Hence, the uniqueness of $\hat{T}_n$ on $\overline{B}_{d_n^*}(0)$ implies the uniqueness of $\hat{\theta}_n$, i.e.\ $P(\hat{\theta}_n^*\neq \hat{\theta}_n) =P(\hat{T}_n^*\neq \hat{T}_n) \to 0$.
\end{proof}

\begin{proof}[Proof of Theorem \ref{thm:estimating-clt}]
Denote the estimating equation as $F_n(\theta)=0$, for $F_n(\theta)$ as in \eqref{eqn:F-sum}. 
The mean value theorem yields \begin{align*}
	0 = \tilde{\Lambda}_n^{-1}(\theta_0) F_n(\hat{\theta}_n) 
	&= \tilde{\Lambda}_n^{-1}(\theta_0)F_n(\theta_0) +\left[\tilde{\Lambda}_n^{-1}\widetilde{F_n} \Gamma_n \bar{\Lambda}_n^{-1}\right] \bar{\Lambda}_n \Gamma_n^{-1}  (\hat{\theta}_n-\theta_0),
\end{align*}
where $(\widetilde{F}_n)_{j,k} = \partial_{\theta_k} (F_n)_j(\tilde{\theta}^j)$ for some $\tilde{\theta}^j$ on the line segment between $\theta_0$ and $\hat{\theta}_n$.
Denote by $R_n\subset \Omega$ the event that $A_n=\tilde{\Lambda}_n(\theta_0)^{-1}\widetilde{F_n} \Gamma_n(\theta_0)\bar{\Lambda}_n(\theta_0)^{-1}$ is regular, and introduce furthermore the matrices \begin{align*}
	A_n^j = \tilde{\Lambda}(\theta_0)^{-1} \D_\theta F_n(\tilde{\theta}^j) \Gamma_n(\theta_0) \bar{\Lambda}_n(\theta_0)^{-1},\quad j=1,\ldots, 3M+1.
\end{align*}
That is, the $j$-th row of $A_n$ and $A_n^j$ coincide, $(A_n)_{j,k}=(A_n^j)_{j,k}$.
Now note that $\|\tilde{\theta}^j-\theta_0\|\leq \|\hat{\theta}-\theta_0\|=\mathcal{O}_P(1/(\log u_n)^2)$,
and for any $C>0$, as in \eqref{eqn:lambda-uniform-1},
\begin{align}
	\begin{split}
	& \sup_{\|\theta-\theta_0\|\leq \frac{C}{(\log u_n)^2}} \|\tilde\Lambda_n^{-1}(\theta_0)\tilde\Lambda_n(\theta) - \mathbf{I}_{3M+1}\| \\
	& \sup_{\|\theta-\theta_0\|\leq \frac{C}{(\log u_n)^2}} \|\bar\Lambda_n^{-1}(\theta_0)\bar\Lambda_n(\theta) - \mathbf{I}_{3M+1}\| \\
	& \sup_{\|\theta-\theta_0\|\leq \frac{C}{(\log u_n)^2}} \|\Gamma_n^{-1}(\theta_0)\Gamma_n(\theta) - \mathbf{I}_{3M+1}\|.
	\end{split}\label{eqn:lambda-uniform-2}
\end{align}
Together with the locally uniform convergence of Corollary \ref{cor:moment-derivative}, this yields $A_n^j\pconv A(\theta_0)$ for each $j$, and thus $A_n\pconv A(\theta_0)$.

In particular, $P(R_n)\to 1$, and on the set $R_n$, we may rewrite 
\begin{align*}
		\sqrt{n}\bar{\Lambda}_n \Gamma_n^{-1}(\theta_0) (\hat{\theta}_n-\theta_0) 
	&= -\sqrt{n}  A_n^{-1} \tilde{\Lambda}_n^{-1} F_n(\theta_0).
\end{align*}
But $\sqrt{n} \tilde{\Lambda}_n^{-1} F_n(\theta_0) \wconv \mathcal{N}(0, \Sigma(\theta_0))$ by Lemma \ref{lem:moment-clt}, and $A_n^{-1}\to A^{-1}(\theta)$ in probability, such that Slutsky's lemma completes the proof.
\end{proof}

\begin{proof}[Proof of Proposition \ref{prop:fisher}]
We show how to adjust the proof of \cite{ait2012identifying} to consider the off-diagonal entries. 
Denote by $\varphi_\alpha$ the density of a symmetric $\alpha$-stable random variable, standardized to have L{\'e}vy measure $\alpha |x|^{-1-\alpha}dx$. 
This is the same parametrization as implied by \eqref{eqn:def-stablesum}. 
Furthermore, let $\varphi$ be the density of a standard normal distribution. 
Then the probability density of $\tilde{Z}_h$ is given by the convolution 
\begin{align*}
	p_h(x) = \int \frac{1}{\sqrt{\sigma^2h}} \varphi\left( \frac{x - (r h )^{\frac{1}{\alpha}}y}{\sqrt{\sigma^2h}} \right) \varphi_\alpha(y)\, dy.
\end{align*}
Now introduce the terms 
\begin{align*}
	w_h = (rh)^{\frac{1}{\alpha}} / \sqrt{\sigma^2h},\quad&\quad v_h=\frac{1}{\alpha(2-\alpha)} \left( 2+\frac{\log(r/\sigma^2)}{\log(1/w_h)} \right),
\end{align*}
and
\begin{align*}
	S_h(x) &= \int \varphi(x-w_hy) \varphi_\alpha(y) \,dy &&=\sqrt{\sigma^2h}\cdot p_h(x \sqrt{\sigma^2 h}) , \\
	R_h^0(x) &= \frac{1}{w_h^\alpha} \int \varphi(x-w_hy) ( \varphi_\alpha(y) + y \partial_y \varphi_\alpha(y))\, dy &&=\frac{-r \alpha \sqrt{\sigma^2h}}{w_h^\alpha} \cdot \frac{d}{dr}p_h(x \sqrt{\sigma^2h}),\\
	R_h^1(x) &= \frac{1}{w_h^\alpha \log(1/w_h)} \int \varphi(x-w_hy)\partial_\alpha \varphi_\alpha(y)\, dy &&\\
	\leadsto  & \qquad{w_h^\alpha \log(1/w_h)} R_h^1(x) - {w_h^\alpha v_h \log(1/w_h)} R_h^0(x) &&=\sqrt{\sigma^2 h}\frac{d}{d\alpha} {p_h(x \sqrt{\sigma^2 h})},\\
	J_h^{l,m} &= \int\frac{R_h^l(x) R_h^m(x)}{S_h(x)} dx,\qquad l,m\in\{0,1\}.
\end{align*}
Some technical integral transformations, explained in more detail by \cite{ait2012identifying} (cf. (A.3) therein), establish that 
\begin{align*}
	\mathcal{I}_h^{r,r} &= \frac{w_h^{2\alpha}}{r^2\alpha^2} J_h^{0,0}, \\
	\mathcal{I}_h^{\alpha,\alpha} &= \int \frac{w_h^{2\alpha} \log(1/w_h)^2( R_h^1(x) - v_h R_h^0(x) )^2}{S_h(x)}dx\\
	&= w_h^{2\alpha} \log(1/w_h)^2(J_h^{1,1}(x) - 2  v_h J_h^{1,0}(x) + v_h^2  J_h^{0,0}(x)),\\
	\mathcal{I}_h^{\alpha,r} &= \int \frac{w_h^{2\alpha} \frac{-R_h^0(x)}{r\alpha} \log(1/w_h) \left( R_h^1(x) - v_h R_h^0(x) \right) }{S_h(x)} \\
	&= \frac{w_h^{2\alpha} \log(1/w_h)}{r\alpha} \left( v_h J_h^{0,0}(x) - J_h^{1,0}(x) \right).
\end{align*}
The main workload of the proof given by \cite{ait2012identifying} derives the limiting behavior of $J_h^{l,m}$ as $h\to 0$. 
They show that 
\begin{align*}
	J_h^{0,0}/\psi_h \to \alpha^4, \quad J_h^{1,0} \to \alpha^3, \quad J_h^{1,1} \to \alpha^2,
\end{align*}
where
\begin{align*}
	\psi_h = \frac{2 \sigma^\alpha}{r\alpha^2(2-\alpha)^{\frac{\alpha}{2}}} \frac{1}{h^{1-\frac{\alpha}{2}}\log(1/h)^\frac{\alpha}{2}}.
\end{align*}
Using furthermore that $v_h \to \frac{2}{\alpha(2-\alpha)}$, this yields 
\begin{align*}
	&\begin{pmatrix}
		\frac{r\alpha}{w_h^{\alpha} \sqrt{\psi_h}} & 0 \\ 0& \frac{1}{w_h^{\alpha}\log(1/w_h)\sqrt{\psi_h}} 
	\end{pmatrix}
	\begin{pmatrix}
	\mathcal{I}_h^{r,r} & \mathcal{I}_h^{r,\alpha} \\ \mathcal{I}_h^{r,\alpha} & \mathcal{I}_h^{\alpha,\alpha}
	\end{pmatrix} 
	\begin{pmatrix}
		\frac{r\alpha}{w_h^{\alpha} \sqrt{\psi_h}} & 0 \\ 
		0 & \frac{1}{w_h^{\alpha}\log(1/w_h)\sqrt{\psi_h}} 
	\end{pmatrix} \\
	&\longrightarrow\qquad \begin{pmatrix}
		\alpha^4 & \frac{\alpha^4}{2-\alpha} \\
		\frac{\alpha^4}{2-\alpha} & \frac{\alpha^4}{(2-\alpha)^2}
	\end{pmatrix}.
\end{align*}
Some straightforward manipulations show that 
\begin{align*}
	&\quad\frac{(h\log(1/h))^\frac{\alpha}{2}}{h} \begin{pmatrix}
			1 & 0 \\
			0 & \frac{1}{\log(1/h)}
		\end{pmatrix}
		\begin{pmatrix}
			\mathcal{I}_h^{r,r} & \mathcal{I}_h^{r,\alpha} \\ \mathcal{I}_h^{r,\alpha} & \mathcal{I}_h^{\alpha,\alpha}
		\end{pmatrix} 
		\begin{pmatrix}
			1 & 0 \\
			0 & \frac{1}{\log(1/h)}
		\end{pmatrix}
		\\
		\longrightarrow&
		\frac{2r}{\sigma^\alpha \alpha^2 (2-\alpha)^\frac{\alpha}{2}}
		\begin{pmatrix}
			\frac{\alpha^2}{r^2} & \frac{\alpha^4}{2-\alpha} \frac{1}{r\alpha} \frac{2-\alpha}{2\alpha} \\
			\text{sym} & \frac{(2-\alpha)^2}{4\alpha^2} \frac{\alpha^4}{(2-\alpha)^2}
		\end{pmatrix}
		\\
		&=\frac{2r}{\sigma^\alpha (2-\alpha)^\frac{\alpha}{2}}
		\begin{pmatrix}
					\frac{1}{r^2} & \frac{1}{2r} \\
					\frac{1}{2r} &  \frac{1}{4}
		\end{pmatrix}
\end{align*}
This limiting matrix is singular. The off-diagonal entry $\mathcal{I}_h^{\alpha,r}$ has not been considered by \cite{ait2012identifying}. 
\end{proof}

\begin{proof}[Proof of Proposition \ref{prop:clt-single}]
	Denote the true parameter by $\alpha_{0,m}$ and $r_{0,m}^\pm$, respectively. 
	By Lemma \ref{lem:expect-derivative}, we have as $n\to\infty$, $h=1/n\to 0$, 
	\begin{align*}
			\frac{1}{hu^\alpha_{m} \log u} \partial_{\alpha_m} \tilde{F}_n(\alpha_{m}) 
		&\to r_m^+\mathcal{J}_{\alpha_m}^+f(0) + r_m^-\mathcal{J}_{\alpha_m}^-f(0), \\
			\frac{1}{hu^{\alpha_m}} \partial_{r_m^\pm} \tilde{F}_n(r_m^\pm) 
		&\to \mathcal{J}_{\alpha_m}^\pm f(0).
	\end{align*}
	This convergence holds uniformly on compacts in $\Theta$. 
	The limits are positive because $r_m^++r_m^->0$ by the definition of $\Theta$, and $\mathcal{J}_{\alpha_m}^\pm f(0)>0$ by assumption. 
	Moreover, Lemma \ref{lem:moment-clt} also holds for $\tilde{F}_n$, i.e. \begin{align}
		nu_n^{-\alpha_1/2} \tilde{F}_n(\theta_0) \wconv  \mathcal{N}\left( 0, (r_1^+ \mathcal{J}_{\alpha_1} + r_1^-\mathcal{J}_{\alpha_1}) f^2(0) \right). \label{eqn:clt-single-1}
	\end{align} 
	Thus, the existence of a consistent sequence of estimators follows along the same lines as Lemma \ref{lem:estimating-consistency}. 
	
	For the central limit theorem, we use the mean value theorem to obtain, for a value $\tilde{\alpha}_m$ between $\alpha_{0,m}$ and $\hat{\alpha}_m$, 
	\begin{align*}
		0 = \tilde{F}_n(\hat{\alpha}_m) = \tilde{F}_n(\alpha_{0,m}) + \partial_{\alpha_m} \tilde{F}_n(\tilde{\alpha}_m) ( \hat{\alpha}_m - \alpha_{0,m}).
	\end{align*}
	In particular, $(\hat{\alpha}_m - \alpha_{0,m}) = - (\partial_{\alpha_m} \tilde{F}_n(\tilde{\alpha}_m))^{-1} \tilde{F}_n(\alpha_{0,m})$.
	Just as in the proof of Theorem \ref{thm:estimating-clt}, we may use the convergence of $\partial_{\alpha_m}\tilde{F}_n(\alpha_m)$ and the central limit theorem \eqref{eqn:clt-single-1} to derive the asymptotic distribution of $\hat{\alpha}_m$ by means of Slutsky's Lemma. 
	Analogously for $r_m^\pm$.
\end{proof}

\bibliography{BGbib}
\bibliographystyle{apalike}

\end{document}